%
%
%
%


\documentclass[12pt]{preprint} 

\usepackage[a4paper,innermargin=1.2in,outermargin=1.2in,
bottom=1.5in,marginparwidth=1in,marginparsep
=3mm]{geometry}

\usepackage{fourier} 
\usepackage[english]{babel} 
\usepackage{amsmath,amsthm}
\usepackage{amsfonts}
\usepackage{tikz-cd}
\usepackage{mathtools}

\usepackage{shuffle}
\usepackage{ amssymb }
\usepackage{fancyhdr} 
\usepackage{comment}

\usepackage{longtable}

\usepackage{cprotect}
\usepackage{hyperref}
\usepackage{mathrsfs}
\usepackage{mhequ}
\usepackage{microtype}
\usepackage{esint}
\usepackage{dirtytalk}

\usepackage{xstring}

\usepackage{orcidlink}

\DeclareMathAlphabet{\mathcal}{OMS}{cmsy}{m}{n}

\colorlet{darkblue}{blue!90!black}
\colorlet{darkred}{red!90!black}
\colorlet{darkgreen}{green!90!black}

\usepackage{array}
\usepackage{tabularx}
\usepackage{authblk}
\usepackage{longtable}
\usepackage{booktabs}
\usepackage{mhequ}

\pagestyle{fancyplain} 
\fancyhead{} 
\fancyfoot[L]{} 
\fancyfoot[C]{} 
\fancyfoot[R]{\thepage} 

\newcommand{\fraks}{\mathfrak{s}}

\def\gr#1{#1\textnormal{-gr}}
\def\tri#1{#1\textnormal{-tri}}

\newcommand{\NN}{\mathbb{N}}

\newcommand{\mcK}{\mathcal{K}}

\newcommand{\mcM}{\mathcal{M}}

\newcommand{\mfK}{\mathfrak{K}}


\newcommand{\mfd}{\mathfrak{d}}


\DeclareMathOperator{\EE}{E}

\newcommand{\vertiii}[1]{{\left\vert\kern-0.25ex\left\vert\kern-0.25ex\left\vert #1 
		\right\vert\kern-0.25ex\right\vert\kern-0.25ex\right\vert}}

\newtheorem{theorem}{Theorem}[section]

\newtheorem{corollary}[theorem]{Corollary}
\newtheorem{claim}[theorem]{Claim}
\newtheorem{lemma}[theorem]{Lemma}
\newtheorem{definition}[theorem]{Definition}
\newtheorem{prop}[theorem]{Proposition}

\theoremstyle{remark}
\newtheorem{remark}[theorem]{Remark}

\DeclareMathOperator{\supp}{supp}

\setlength{\headheight}{13.6pt} 

\numberwithin{equation}{section} 
\numberwithin{figure}{section} 
\numberwithin{table}{section} 

\colorlet{symbols}{blue!90!black}
\colorlet{testcolor}{green!60!black}

\usetikzlibrary{calc}

\tikzset{
	eps/.style={circle,fill=white,draw=symbols,inner sep=0pt,minimum size=0.8mm},
	}
\makeatletter
\def\DeclareSymbol#1#2#3{\expandafter\gdef\csname MH@symb@#1\endcsname{\tikz[baseline=#2,scale=0.15,draw=symbols]{#3}}}
\def\<#1>{\csname MH@symb@#1\endcsname}
\makeatother

\makeatletter

\DeclareRobustCommand{\TitleEquation}[2]{\texorpdfstring{\StrLeft{\f@series}{1}[\@firstchar]$\if%
		b\@firstchar\boldsymbol{#1}\else#1\fi$}{#2}}

\makeatother

\DeclareMathOperator{\Vol}{Vol}
\DeclareMathOperator{\mass}{\mathbf{m}}

\DeclareMathOperator{\Bd}{Bd}
\DeclareMathOperator{\VV}{V}
\DeclareMathOperator{\diam}{diam}

\newcommand{\osimplex}{\sigma}

\newcommand{\simplex}{\boldsymbol{\sigma}}


\newcommand{\horrule}[1]{\rule{\linewidth}{#1}} 

\title{	
\horrule{0.5pt} \\[0.4cm] 
\large Rough Geometric Integration
 \\ 
\horrule{0.5pt} \\[0.5cm] 
}
\author{Ajay Chandra$^{1}$\orcidlink{0000-0003-3690-1890} and Harprit Singh$^2$\orcidlink{0000-0002-9991-8393}}
\institute{
Purdue University, West Lafayette, USA, \email{ajay.chandra@gmail.com} \and
University of Vienna, \email{singhh92@univie.ac.at}}
\setcounter{tocdepth}{2} 
\begin{document}

\maketitle 
\begin{abstract}
	\noindent We introduce a notion of distributional $k$-forms on $d$-dimensional manifolds which can be integrated against suitably regular $k$-submanifolds. 
		Our approach combines ideas from Whitney's geometric integration \cite{Whi12} with those of sewing approaches to rough integration \cite{Gub04,FP06}.
\end{abstract}
\\

\medskip
\noindent\textit{ Mathematics Subject Classification (2020):} 	60L99, 49Q15.\\

\tableofcontents

\section{Introduction}
In this work we introduce Banach spaces of generalised/rough differential forms (or cochains) which are analogous to H\"older distributions. 
Following the approach of geometric integration, pioneered by Whitney \cite{Whi12} and further developed by Harrison \cite{GaussGreen, har_notes}, an element $A$ of such a space is characterized by its evaluation on simplices $\sigma \mapsto A(\sigma)$.
We work in a rougher setting where these elements can be of ``infinite variation'', that is we allow $\sup_{\mathcal{K} | \sigma} \sum_{\sigma' \in \mathcal{K}} |A(\sigma')| = \infty$ where the supremum is over subdivisions $\mathcal{K}$ of $\sigma$ into smaller simplices.  
In particular, we can make sense of a class of 
 $k$-forms formally given by $f = \sum_{I} f_{I} \mathrm{d}x^I$  where the $(f_{I})_{I}$ are genuine distributions. 
 
Our work here can be seen as formulating a geometric analogue of the integration theory of Young \cite{Young}, building on earlier work such as \cite{Zus11,ST21,AST24,Che18, Harang2018, CCHS22}. 
It is also motivated by the study of random fields in probability theory where one is often interested in integrating rough random distributions on $\mathbb{R}^{d}$ over $k < d$ dimensional objects, for instance circle averages of the Gaussian Free Field \cite{BP24} or Wilson loop observables for quantum gauge fields \cite{Che18,CCHS22}. 

\subsection{Main results}

We introduce a space $\Omega^{k}_{{\alpha,\beta}}$ of generalised $k$-forms in $d$-dimensional ambient space
for $\alpha \in (0,1]$, $\beta \in [0,1]\cup \{\infty\}$.
Our results for this space are as follows: 
\begin{itemize}
\item Elements of $\Omega^{k}_{{\alpha,\beta}}$ can be integrated in the sense of geometric integration theory over embedded $k$-manifolds.
\item For $\gamma\in (1-\alpha+\beta,1]$,  $\Omega^{k}_{{\alpha,\beta}}$ is a $C^\gamma$--module under pointwise multiplication.
\item Whenever $\alpha>1/2$, these forms can be pulled-back which allows them to be defined intrinsically on manifolds. This allows for a notion of integration by pullback for which a corresponding Stokes' theorem holds. 
\item For $\alpha \in (0,1)$,
there is a natural embedding $\Omega^{k}_{\alpha,\beta}(\mathbb{R}^d) \hookrightarrow \prod_{I}  \mathcal{C}^{\alpha - 1}(\mathbb{R}^{d})$ which extends the mapping
 $f = \sum_{I} f_I(x) dx^{I}\mapsto \{f_I\}_{I}$ on smooth forms $f$.
\item We present a Kolmogorov criterion for the spaces $\Omega^{k}_{{\alpha,\beta}}$, which is checked for a class of  (distributional) Gaussian fields. 
\end{itemize}
Along the way, a useful tool that we formulate and prove is a ``simplicial sewing lemma'', Proposition~\ref{prop:sewing}, which might be of independent interest as it provides a coordinate invariant point of view\footnote{See also Remark~\ref{rem:disclaimer}.} on multidimensional sewing {cf.\ \cite{Harang2018, roughsheet, towghi2002multidimensional}}. 

%

\subsection{Relationship to past work}
This present work also unifies several previous constructions in geometric integration theory, below we compare our spaces $\Omega_{{\alpha,\beta}}^k$ and results about them to past work in this area.  

\begin{itemize}
\item For $\alpha=\beta=1$ these spaces agree with the flat cochains of Whitney, \cite{Whi12}. For $\alpha=1$, $0<\beta<1$ these spaces agree with the class of $(k-1+\beta)$- cochains introduced in \cite{GaussGreen}, see Remark~\ref{rem:generalised flat norm}.

\item These spaces contain the generalised $d$-forms $f\cdot dg_1\wedge...\wedge dg_d$ introduced in \cite{Zus11}  and recently studied in \cite{AST24} for $d = 2$ by discrete approximation. We extend several results therein, see Remark~\ref{rem:non-atomic}, Corollary~\ref{cor:zust}, and Theorem~\ref{thm:multiplication}.
\item 
Shortly before releasing this work as a preprint, the independent works \cite{B24_Young, B24_Exterior} appeared. They also develop a novel higher-dimensional Young-type theory, but from more of a geometric measure theory perspective which gives results distinct from ours.
Moreover, as our article was nearing publication, the preprint \cite{jaffard} appeared on the arXiv, which examines Züst integration.

\end{itemize}

This article was motivated by recent work in probability theory and mathematical physics on Yang--Mills theory.
The problem of defining line integrals of rough $1$-forms naturally arises in this context when one tries to make sense of Wilson loop observables for quantum gauge fields. 
In particular, the works \cite{Che18} and \cite[Sec.~3]{CCHS22} introduce and make crucial use of spaces of rough $1$-forms on $\mathbb{R}^2$
in order to define a well behaved analytic state spaces for two dimensional continuum Yang--Mills theory. 
The spaces introduced therein are very similar to our spaces $\Omega^{1}_{\alpha,\beta}$ for $d=2$ and $k=1$, we refer to
 in Remark~\ref{rem:gauge}, Remark~\ref{rem:control}, Proposition~\ref{prop:embedding}, and Section~\ref{sec:application to random fields} for further discussion.
%
%

\subsection{Outline of the article}
In Section~\ref{sec:preliminaries} we introduce geometric notions, such as the set  $\mathfrak{X}^k$ of oriented $k$-simplices in $\mathbb{R}^d$ and a corresponding space of chains $\mathcal{X}^k=\mathbb{Z}[\mathfrak{X}^k]/{\sim}$ where $\mathbb{Z}[\mathfrak{X}^k]$ denotes the free $\mathbb{Z}$-module generated by $\mathfrak{X}^k$ and the equivalence relation $\sim$ identifies the operators of addition and negation on $\mathbb{Z}[\mathfrak{X}^k]$ with the geometric operations of gluing simplices and inverting orientation.
The subset $\Omega^k$ of measurable elements of the algebraic dual of $\mathcal{X}^k$ provides us with a `universe' of differential forms/cochains.  

The first novelty that appears is our notion of size/mass of a simplex $\mass^k_{\alpha}: \mathfrak{X}^k\to  \mathbb{R}$ for $\alpha\in [0,1]$, given by the maximum of $\Vol^{k-1} (F) \cdot h^{\alpha}_{F}$ over faces $F$ of the simplex, where $h_F$ is the height measured from $F$.

In Section~\ref{sec:distributional} we define spaces $\Omega^k_{{\alpha,\beta}}\subset \Omega^k $ of `distributional' forms by imposing for $ \sigma \in \mathfrak{X}^k$ and $\omega \in  \mathfrak{X}^{k+1}$ the inequalities 
$$| A (\sigma)| \lesssim \mass_\alpha^k (\sigma), \qquad  | \partial A (\omega)| \lesssim \mass_\beta^k (\omega) \ .$$ 
Here $\partial:\mathcal{X}^{k+1}\to \mathcal{X}^k$ is the linear extension of the boundary operator on oriented simplices, which by duality maps $\Omega^k$ to $\Omega^{k+1}$.
Lemma~\ref{lem:equivalent norms} provides equivalent characterisations of the spaces $\Omega^k_{\alpha,\beta}$ by controlling $A(\sigma)$ either in terms of the diameter of $\sigma$, or in terms of the value of $A$ on cubes.
Subsection~\ref{sec:estimates on chains} then introduces a norm $| \cdot |_{(\alpha,\beta)}$ on $\mathcal{X}^{k}$ which for $\alpha=\beta=1$ is equivalent to Whitneys flat norm, see \cite{Whi12}, and for $\alpha=1, \beta\in (0,1)$ equivalent to the $(k-1+\beta)$-norm of Harrison, \cite[Sec.~2]{GaussGreen}. 
We then establish an integration theory in the spirit of geometric integration by showing that $\Omega^{k}_{\alpha,\beta}$ is contained in the
 continuous dual of $\mathcal{B}_{\alpha,\beta}^k$, the completion of $(\mathcal{X}^k, | \cdot |_{(\alpha,\beta)})$,
which in particular canonically contains closed manifolds.

Section~\ref{sec:multiplication} establishes the Young type multiplication theorem, Theorem~\ref{thm:multiplication}, which in particular generalises  \cite[Thm.~4.4]{AST24} from dimension $2$ to arbitrary dimension. 
In Corollary~\ref{cor:zust}, we generalise \cite[Thm.~3.2]{Zus11} and establish that spaces of forms considered there are contained in our spaces. 

The main result of Section~\ref{sec:pull-back} shows that the notion of pull-back on smooth forms can be extended to $\Omega_{\alpha,\beta}^k$. As a consequence one can canonically define spaces $\Omega_{\alpha,\beta}^k(M)$ on any $C^{1,\eta}$-manifold $M$ as soon as $\alpha>1/(1+\eta)$. 
Furthermore, the Stokes' theorem extends to forms in $\Omega_{\alpha,\beta}^k$.

In Section~\ref{sec:embedding},  analogously to \cite[Prop.~3.21]{Che18}, we show that the spaces $\Omega_{\alpha,\beta}^k(\mathbb{R}^{d})$ embed into spaces of Hölder distributions and, for codimension $d - k = 0$, this embedding is an isomorphism.

Section~\ref{sec:application to random fields} states and proves the promised Kolmogorov criterion which is then checked for a class of Gaussian fields.

Lastly, Section~\ref{sec:sewing} contains a proof of the simplicial sewing lemma, Proposition~\ref{prop:sewing}. 
We present the proof, which only requires notions from Section~\ref{sec:preliminaries}, at the end of the article in order to streamline exposition. For the proof we define the notion of a ``strongly regular method of subdivision'' (see Definition~\ref{def:reg_method}) -- the existence of such a method is non-trivial and follows from \cite{EG}.

\paragraph{Notation:}
We shall often use for the notation $\lesssim$ to mean that an inequality holds up to multiplication by a constant which may change from line to line but is uniform over any stated quantities. We furthermore write $\lesssim_{C}$ to emphasise the dependence of that constant on $C$. We write 
$a\vee b:=\max\{a,b\}$ and $a\wedge b:=\min\{a,b\}$ 
for $a,b\in \mathbb{R}$.

For $m\in \mathbb{N}$ we equip $\mathbb{R}^m$ with the Borel $\sigma$-algebra and a norm $|\cdot|$ which is fixed throughout the article. We write $\Vol^k$ for the $k$-dimensional Hausdorff measure. 
For a function $f: \mathbb{R}^d \to \mathbb{R}^m$, we shall use the notation $Df=\{\partial_{i} f\}_{i=1}^d$ to denote its derivative, as well as multi-index notation $D^I f:= \partial_1^{i_1},...,\partial_d^{i_d} f$ for $I=(i_1,...,i_d)\in \mathbb{N}^{d}$. We extend differentiation to distributions in the usual way. For $\Omega \subset \mathbb{R}^d$ and a function $f:\Omega \to \mathbb{R}^m$ we write $\|f\|_{L^{\infty}(\Omega)}:= \sup_{x\in \Omega} |f|$.  For $\alpha\in (0,1]$, we denote by $C^{\alpha}(\Omega)$ the usual space of Hölder continuous functions equipped with the norm 
$$\|f \|_{C^\alpha(\Omega)}:= \|f\|_{L^{\infty}(\Omega)} + \sup_{x,y\in \Omega} \frac{|f(x)-f(y)| }{|x-y|^\alpha\wedge 1}\;.$$
As standard, $C^{k,\alpha}(\Omega)$ denotes $k$-times differentiable function such that the $k$- derivatives is are elements of $C^{\alpha}$. For $\alpha<0$, we work with the standard distribution spaces $C^\alpha$ as in \cite{Hai14}.
Finally, we use both $d$ and $\partial$ to denote exterior derivatives. 

\paragraph{Acknowledgements:}
The authors would like to thank Philippe Bouafia for bringing his preprints to their attention and for interesting discussions comparing our methods. 
They also thank the anonymous referees for many useful comments which improved this paper, as well as Thomas Jaffard for spotting a typo in \eqref{eq:comparison1}.
HS would like to thank M\'at\'e Gerencs\'er for his hospitality during a stay at TU Wien and gratefully acknowledges financial support from the EPSRC via Ilya Chevyrev’s New Investigator Award EP/X015688/1.
AC gratefully acknowledges partial support by the EPSRC through the “Mathematics of Random Systems” CDT EP/S023925/1.

\section{Preliminary geometric notions}\label{sec:preliminaries}
	
In this section we recall standard notation and facts about simplices and chains, cf. \cite{Hat05, Mun18, Lee10, GaussGreen}.
Unless mentioned otherwise, we shall be working in the ambient space $\mathbb{R}^d$.

\subsection{Simplices and chains}
Vectors $v_0,v_1,...v_k\in \mathbb{R}^d$ are called \textit{affinely independent} if they lie in a unique $k$-dimensional affine subspace of $\mathbb{R}^d$, or equivalently, if the vectors {$\{v_i-v_0\}_{i=1}^k$} are linearly independent.
	
\begin{definition}\label{def:non-oriented}
	The (unoriented) $k$-simplex $\simplex$ spanned by affinely independent vectors $v_0,...,v_k\in \mathbb{R}^d$ is defined as
	$$\simplex:=\llbracket v_0,..,v_k \rrbracket:=\left\{ \sum_{i=0}^k t_i v_i\in \mathbb{R}^d \ : \ t_i\in [0,1], \ \sum_{i=0}^k t_i=1 \right\} \ .$$

The points $v_i$ are called the vertices of $\simplex$ and we write $\VV(\simplex)$ for this set.
A simplex spanned by a non-empty subset of \ $\VV(\simplex)$ is called a face of $\simplex$. 
A face which is a $k-1$-simplex is called a boundary face. 
\end{definition}

%

\begin{definition}\label{def:subdivision,mesh}
We say a collection $\simplex_1,\dots,\simplex_N$ of $k$-simplices are non-overlapping if $\Vol^k(\simplex_i \cap \simplex_j) = 0$ for every $1 \le i < j \le N$.

A \textit{subdivision} of a simplex $\pmb{\sigma}$ consists of a  finite set of non-overlapping k-simplices $\mathcal{K}$ 
such that $\pmb{\sigma}=\bigcup_{\pmb{w}\in \mathcal{K}} \pmb{w}$. 

The \textit{mesh} of the subdivision $\mathcal{K}$ is defined as $\diam(\mathcal{K}):=\max_{\pmb{w} \in \mathcal{K} } \diam (\pmb{w})$. 
We call a sequence of subdivisions $(\mcK_n)_{n\in \mathbb{N}}$ of a $k$-simplex $\simplex$ regular if, for any $\beta>k$, 
$$\lim_{n\to \infty} \sum_{\pmb{w}\in \mathcal{K}_n}\diam(\pmb{w})^\beta \to 0\ .$$
\end{definition}

An \textit{orientation} of a simplex spanned by affinely independent vectors $v_0,..,v_k\in \mathbb{R}^d$ is an ordering of its vertices $(v_0,...v_k)$, where two orderings are declared equivalent if they differ by an even permutation. 

\begin{definition}\label{def:simplex, etc}
An oriented simplex $\osimplex$ consists of an unoriented simplex $\simplex$ together with an orientation $o$ of its vertices, that is 
$	\osimplex=(\simplex, o)$.
We shall use the notation $[v_0,..,v_k]$ to denote the simplex $\llbracket v_0,..,v_k \rrbracket$ with the orientation inherited from the ordering $(v_0,..,v_k)$.
\end{definition}

We denote by $\mathfrak{X}^k$ the set of all oriented $k$-simplices. 
Given $\mathfrak{K} \subset \mathbb{R}^{d}$, we define $\mathfrak{X}^{k}(\mathfrak{K})$ to be the set of oriented simplices $\osimplex \in \mathfrak{X}^{k}$ with $\simplex \subset \mathfrak{K}$.
For $R > 0$, we also write $\mathfrak{X}_{\le R}^{k}(\mathfrak{K})$ for the set of all $\osimplex \in \mathfrak{X}^{k}(\mathfrak{K})$ with $\diam(\osimplex) \leq R$.

Note that $0$-simplices only have one orientation. For $k \geq 1$ a $k$-simplex $\simplex$ can carry one of two distinct orientations and for a given orientation $o$, we write $-o$ to denote the other orientation on $\simplex$. We observe that the orientation $o$ of a simplex $\simplex$ can be canonically identified with an orientation of the (unique) $k$-dimensional hyperplane containing $\simplex$ -- we shall freely use this identification
and compare the orientation of two simplices contained in the same hyperplane.
We also extend the notion of non-overlapping to oriented simplices in the natural way. 

\begin{definition}
For a simplex $\osimplex=[v_0,...,v_k]$, we denote by $\sigma_{\setminus v_i}:= (-1)^i[v_0, ... ,\hat{v}_i, ... ,v_k]$ the (oriented) face which does not contain $v_i\in V(\sigma)$, where we have used standard notation which means leaving out the term with a hat. 
We set $\Bd(\sigma):= \{ \sigma_{\setminus v} \ : \ v\in V(\sigma)\}$.
\end{definition}

Given an oriented simplex $\osimplex$, we write $\simplex$ for the underlying non-oriented simplex. 
We sometimes overload notation by allowing functions which take unoriented simplices $\osimplex$ as arguments to also take oriented simplices $\osimplex = (\simplex,o)$ as arguments by just disregarding the orientation $o$.

\begin{definition}\label{def:subdivision_2}
Given an oriented simplex $\osimplex \in \mathfrak{X}^{k}$, we say $\mathcal{K} \subset \mathfrak{X}^{k}$ is a subdivision of $\osimplex$ if every $\tau \in \mathcal{K}$ has the same orientation as $\osimplex$ and $\big\{ \pmb{\tau}: \tau \in \mathcal{K} \big\}$ is a subdivision of the non-oriented simplex $\pmb{\sigma}$ (see Definition~\ref{def:subdivision,mesh}).

We write $\mathcal{K} | \sigma$ to denote that $\mathcal{K}$ is a subdivision of the oriented simplex $\sigma$.
Note that the notions of mesh and of regularity clearly extend to the subdivisions of oriented simplicies.  
\end{definition}

\begin{definition}\label{def:chain}
The set of $k$-chains is given by $\mathcal{X}^k= \mathbb{Z}(\mathfrak{X}^k)/ \sim$ where the equivalence relation $\sim$ for 
$k=0$ is trivial and equivalence classes consist of singletons. For $k\geq 1$ 
\begin{itemize}
\item $-\osimplex \sim (\simplex,-o) \ $ for every $\osimplex = (\simplex,o) \in \mathfrak{X}^k$.

\item $
\displaystyle\sum_{\tau \in \mathcal{K}}  \tau \sim \osimplex\;
$ for any $\osimplex \in \mathfrak{X}^{k}$ and subdivision $\mathcal{K} | \sigma$.
\end{itemize}
\end{definition}

We recall the boundary operator 
\begin{equ}\label{eq:boundary op}
\partial : \mathfrak{X}^k \to \mathcal{X}^{k-1}, \qquad \sigma\mapsto \partial \sigma:= \sum_{v\in \VV(\sigma)} \sigma_{\setminus v} = \sum_{F\in \Bd(\sigma)} F \;.
\end{equ}
Note that this induces a map $\partial: \mathcal{X}^{k}\to \mathcal{X}^{k-1}$
which satisfies
$\partial\circ \partial=0$, cf.\ \cite{Hat05}.

\begin{definition}\label{def:cube}
We call $\pmb{Q} \subset \mathbb{R}^{d}$ a (non-oriented) $k$-cube of side length $r>0$ if it is the image of $[0,r]^k\subset \mathbb{R}^k$ under an isometric embedding. 
An oriented cube is specified by $Q= (\pmb{Q}, o)$, where $o$ is an orientation of the $k$-hyperplane containing $Q$. 
For a set $\mathfrak{K}\subset \mathbb{R}^d$, $R>0$, we write  $\mathfrak{Q}^k_{\leq R}(\mathfrak{K})$ for the set of oriented cubes $Q= (\pmb{Q}, o)$, where $\pmb{Q} \subset \mathfrak{K}$ has side--length bounded by $R$.

We view $Q = (\pmb{Q},o)$ as an element of $\mathcal{X}^{k}$ by setting $Q = \sum_{\pmb{\tau} \in \mathcal{K}} (\pmb{\tau},o)$ for any $\mathcal{K} \subset \mathfrak{X}^{k}$ consisting of non-overlapping simplices which satisfy $\bigcup_{\pmb{\tau} \in \mathcal{K}} \pmb{\tau} = \pmb{Q}$.

Note that the notion of non-overlapping clearly extends to cubes.
\end{definition}

Working with cubes or simplices both offer distinct advantages. 
While cubes are advantageous when working on fibered spaces, cf. \cite{Harang2018},
simplices are combinatorially simpler. 
For example the boundary faces of a simplex can easily by explicitly expressed and any polygon can be decomposed into finitely many simplices. 
We recall here a special case of Whitney's covering theorem \cite{Whitney_extension}, which will allow us to interchange between control over simplices and control over cubes, see Lemma~\ref{lem:equivalent norms}.
\begin{lemma}\label{lem:covering}
Given any non-oriented $k$-simplex $\simplex$, there exists a family of non-overlapping $k$-cubes $\big( \pmb{Q}^{i}_{2^{-n}} : n \in \mathbb{N}, i \in I_{n} \big)$ where each $I_{n}$ is a finite set, $\pmb{Q}^{i}_{2^{-n}}$ is $k$-cube of side length $2^{-n}$, and 
\[
\simplex 
=
\overline{ \bigcup_{n=0}^{\infty} \bigcup_{i \in I_{n}} \pmb{Q}^{i}_{2^{-n}}} 
\quad
\text{and}
\quad
2^{-n}\sqrt{k}
\le
\mathrm{dist}(\pmb{Q}^i_{2^{-n}}, \mathbb{R}^{d} \setminus \simplex) 
\le 4 \cdot 2^{-n} \sqrt{k}\;.
\]
Moreover, the Whitney decomposition can be chosen so that for any $\simplex$, we can find index sets 
$(J_{N} : N \in \mathbb{N})$ and collections of non-overlapping simplices $\{\simplex_{j} : j \in J_{N}\} $ of diameter at most $2^{-N}$ 
such that
\begin{equ}\label{whitney_remainder}
\simplex - \sum_{n = 0}^{N} \sum_{ i\in I_n} \pmb{Q}^{i}_{2^{-n}} 
=\sum_{j \in J_{N}}  \simplex_j
\quad
\text{and}
\quad  
|J_N|
\lesssim_k  2^{N(k-1)}\;,
\end{equ}
where the first equality above is between elements in $\mathcal{X}^{k}$ and the second inequality is uniform in $N$. 
\end{lemma}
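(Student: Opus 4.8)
The plan is to construct the Whitney decomposition by the standard stopping-time argument applied to the open set $\Int(\simplex) \subset \mathbb{R}^d$, but carried out relative to the affine $k$-plane $H$ containing $\simplex$, so that the cubes we produce are genuine $k$-cubes (images of $[0,r]^k$ under an isometry of $\mathbb{R}^k$ into $H$). Concretely, fix an isometry $\iota\colon \mathbb{R}^k \to H$ identifying $\simplex$ with a $k$-simplex $\simplex_0 \subset \mathbb{R}^k$. First I would recall the classical Whitney covering lemma (see \cite{Whitney_extension}, and e.g. \cite{EG}) which yields, for the open set $U = \Int(\simplex_0) \subset \mathbb{R}^k$, a countable family of closed dyadic cubes $\{\pmb{Q}^i_{2^{-n}} : n \in \mathbb{N}, i \in I_n\}$ with pairwise disjoint interiors (hence non-overlapping in the sense of Definition~\ref{def:subdivision,mesh}), whose union is $\overline{U} = \simplex_0$, and which satisfy the two-sided distance bound $\operatorname{diam}(\pmb{Q}^i_{2^{-n}}) \le \operatorname{dist}(\pmb{Q}^i_{2^{-n}}, \mathbb{R}^k \setminus U) \le 4\operatorname{diam}(\pmb{Q}^i_{2^{-n}})$; since $\operatorname{diam}$ of a $k$-cube of side $2^{-n}$ is $2^{-n}\sqrt{k}$ and $\operatorname{dist}$ is unchanged under the isometry $\iota$, this is exactly the displayed estimate. (One only keeps finitely many cubes at each scale because $\simplex_0$ is bounded; finiteness of $I_n$ is then automatic.) The first displayed equation of the lemma, the identity $\simplex = \overline{\bigcup_n\bigcup_{i} \pmb{Q}^i_{2^{-n}}}$, is immediate from $\bigcup \pmb{Q}^i_{2^{-n}} = U$ and $\overline{U} = \simplex$.

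The substantive part is the controlled remainder~\eqref{whitney_remainder}. The geometric content is that $R_N := \simplex \setminus \bigcup_{n=0}^N\bigcup_{i\in I_n}\pmb{Q}^i_{2^{-n}}$ is a ``dyadic collar'' of width $\lesssim_k 2^{-N}$ around $\partial\simplex$: by the lower distance bound, every point of $\simplex$ at distance $> C_k 2^{-N}$ from $\mathbb{R}^k\setminus\simplex$ lies in some cube of side $\ge 2^{-N}$, i.e. in a cube with scale index $n \le N$, hence is already covered; so $R_N$ is contained in the set of points of $\simplex$ within distance $C_k 2^{-N}$ of $\partial \simplex$. This collar is a polyhedral region (it is a finite union of the unused Whitney cubes, which all have side $\le 2^{-N}$), so it admits a simplicial decomposition; one then takes $\{\simplex_j : j \in J_N\}$ to be the simplices of such a triangulation and reads~\eqref{whitney_remainder} as an identity in $\mathcal{X}^k$ after orienting each $\simplex_j$ compatibly with $\simplex$ (using the equivalence relation $\sim$ of Definition~\ref{def:chain}, which lets us write $\simplex$ minus the cubes it contains as the sum of the remaining pieces). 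To get a triangulation with the stated cardinality bound one can simply triangulate each of the remaining Whitney cubes individually into $k!$ simplices of diameter $\le 2^{-N}\sqrt k$, so $|J_N| \le k! \cdot \#\{\text{unused cubes of scale}\le 2^{-N}\}$; hence it suffices to bound the number of Whitney cubes meeting the $C_k2^{-N}$-collar of $\partial\simplex$.

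That counting bound, $|J_N| \lesssim_k 2^{N(k-1)}$, is the point where I expect to spend the most care. The argument is: $\partial\simplex_0$ is a finite union of $(k-1)$-dimensional faces, each of $\mathcal{H}^{k-1}$-measure bounded in terms of $\operatorname{diam}(\simplex)$ and $k$; the $C_k2^{-N}$-neighbourhood of $\partial\simplex_0$ in $\simplex_0$ therefore has $k$-dimensional Lebesgue measure $\lesssim_k 2^{-N}$ (a tubular-neighbourhood / Minkowski-content estimate for a fixed finite polyhedral complex). Each retained Whitney cube of scale index $n$ has volume $2^{-nk}$, and by the upper distance bound a cube meeting the collar has $n \gtrsim_k N$, i.e. scale $2^{-n} \lesssim_k 2^{-N}$; but the cubes at scales much finer than $2^{-N}$ near the boundary also have total volume controlled the same way. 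More simply: the number of the cubes of \emph{exactly} scale $2^{-n}$ ($n\ge N - O(1)$) that are retained is at most $(\text{volume of the }2^{-n}\text{-collar})/2^{-nk} \lesssim_k 2^{-n}/2^{-nk} = 2^{n(k-1)}$, and summing the geometric series $\sum_{n\ge N-O_k(1)} 2^{n(k-1)}$ — wait, that diverges, so instead one observes that the retained cubes of scale $2^{-n}$ with $n \le N$ are exactly those meeting the collar, and for $n\le N$ the collar they can meet has width $\lesssim_k 2^{-N}$ (not $2^{-n}$), giving at each such $n$ at most $(\text{diam}(\simplex))^{k-1} \cdot 2^{-N} \cdot 2^{nk} \cdot (\text{const})$ cubes — no. The clean way, which I would actually write out: a retained Whitney cube of side $2^{-n}$ with $n\le N$ must have $\operatorname{dist}(\cdot,\partial\simplex)\le 4\cdot 2^{-n}\sqrt k$ but, because it is \emph{not} contained among the first $N$ scales only when $n > N$, in fact $n\le N$ cubes are all covered; so the remainder $R_N$ is a union of retained cubes with $n > N$ whose union is contained in the $C_k 2^{-N}$-collar. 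Covering that collar by the $O_k(1)$ faces and using that a set of $\mathcal{H}^{k-1}$-measure $m$ and its $t$-neighbourhood is covered by $\lesssim m\, t^{-(k-1)}$ cubes of side $t$, with $t = 2^{-N}$, yields $\lesssim_k 2^{N(k-1)}$ cubes of side $2^{-N}$ covering $R_N$; triangulating those into $\le k!$ simplices each gives $|J_N|\lesssim_k 2^{N(k-1)}$, as claimed. The only subtlety is to phrase ``$R_N$ is covered by $\lesssim_k 2^{N(k-1)}$ cubes of side $2^{-N}$'' in a way that is literally an \emph{equality of chains} $\simplex - \sum_{n=0}^N\sum_i \pmb{Q}^i_{2^{-n}} = \sum_{j\in J_N}\simplex_j$: one re-triangulates the (finitely many, all of side $\le 2^{-N}$) retained cubes of scales $> N$ together, and the chain identity holds because both sides, as polyhedral $k$-chains carrying the orientation of $\simplex$, have the same underlying point set and the $\sim$-relation identifies a polyhedron with any of its simplicial subdivisions.
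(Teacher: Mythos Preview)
The paper does not give its own proof of this lemma; it is stated without argument as a special case of Whitney's covering theorem, citing \cite{Whitney_extension}. Your plan for the first displayed statement --- carry out the classical Whitney decomposition inside the affine $k$-plane $H$ via an isometry $\iota$ --- is the standard argument and is fine.

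For the remainder estimate \eqref{whitney_remainder} your counting is correct: $R_N := \simplex \setminus \bigcup_{n\le N}\bigcup_i \pmb{Q}^i_{2^{-n}}$ lies in a collar of width $\lesssim_k 2^{-N}$ about $\partial\simplex$, and such a collar can be covered by $\lesssim_k 2^{N(k-1)}$ cubes of side $2^{-N}$. But the final step --- turning this into an \emph{exact} chain identity with non-overlapping $\simplex_j$ --- has a gap. A covering is not a partition, and your proposed repair, to ``re-triangulate the (finitely many, all of side $\le 2^{-N}$) retained cubes of scales $> N$ together'', fails because the Whitney cubes with $n>N$ are \emph{infinitely} many: they accumulate along $\partial\simplex$, so you cannot directly pass from them to a finite collection $\{\simplex_j\}$.

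The clean fix is to stay on the dyadic grid at scale $2^{-N}$ rather than descend to the smaller Whitney cubes. Since Whitney cubes are themselves dyadic, every dyadic $k$-cube of side $2^{-N}$ is either contained in some Whitney cube of scale $\le N$ or interior-disjoint from all of them; let $\mathcal{R}_N$ be the finite collection of the latter type that meet $\simplex$. Then $R_N = \bigcup_{Q\in\mathcal{R}_N}(Q\cap\simplex)$ is an exact decomposition into non-overlapping convex polytopes, each the intersection of a cube and a simplex and hence triangulable into $O_k(1)$ simplices of diameter $\le 2^{-N}\sqrt{k}$. Your collar argument then gives $|\mathcal{R}_N|\lesssim_k 2^{N(k-1)}$, and the chain identity follows. (This produces diameters $\lesssim_k 2^{-N}$ rather than literally $\le 2^{-N}$; shift $N$ by a $k$-dependent constant if you want to match the statement exactly.)
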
 
\begin{remark}
We extend the notion of Whitney decomposition to oriented simplices $\simplex$ in the obvious way, that is, one has a family of oriented cubes $\big( Q^{i}_{2^{-n}} : n \in \mathbb{N}, i \in I_{n} \big)$ where the $\pmb{Q}^{i}_{2^{-n}}$ form a Whitney decomposition of $\pmb{\sigma}$, and the $Q^{i}_{2^{-n}}$ are oriented as $\sigma$ is. 
\end{remark}

\begin{remark}\label{rem: partition of unity}
Lastly, we explain how to construct a smooth partition of unity subordinate to the Whitney decomposition of a $d$-simplex $\simplex\subset \mathbb{R}^d$.
 For an axis parallel cube $\pmb{Q}\subset \mathbb{R}^d$ we denote by 
$x_{\pmb{Q}}\in \mathbb{R}^d$ resp. $r_{\pmb{Q}}\in \mathbb{R}^d$ the center, respectively the side length of $\pmb{Q}$ which are characterised by 
$\pmb{Q}= x_{\pmb{Q}}+ r_{\pmb{Q}} \cdot [-1/2,1/2]^d$.
Let $\phi\in C^\infty$ be compactly supported on $[-2/3,2/3]^d$ such that $\phi|_{[-1/2,1/2]^d}=1$, 
we define
$$\phi_{\pmb{Q}}(x)= \phi \left(\frac{x-x_{\pmb{Q}}}{r_{\pmb{Q}}} \right)\;.$$ 
For a given Whitney decomposition  $\simplex 
=
\overline{ \bigcup_{n=0}^{\infty} \bigcup_{i \in I_{n}} \pmb{Q}^{i}_{2^{-n}}} $ set
 $$\phi_{n,i}(x) := \begin{cases}\frac{  
 \phi_{\pmb{Q}^{i}_{2^{-n}}}(x)
 }{\displaystyle\sum_{n\in \mathbb{N},i\in I_n}  \phi_{\pmb{Q}^{i}_{2^{-n}} } (x)} & \text{if } x\in \text{Int}(\simplex)\\
 0 & \text{else.}
\end{cases}
$$
We call $\{\phi_{n,i}\}_{n\in \mathbb{N}, i\in I_n}$ the partition of unity subordinate to the above Whitney decomposition constructed from $\phi\in C^\infty$. Finally, one notes that by construction
$$ \|D^k \phi_{n,i}\|_{L^\infty} \lesssim_k 2^{nk}$$
uniformly over $\simplex$, and $n\in \mathbb{N}, i\in I_n$.
\end{remark}


\subsection{Controls on simplices}

For any non-oriented $j$-simplex $\mathbf{F}$, we denote by $\langle \pmb{F} \rangle \subset \mathbb{R}^{d}$ the $j$-hyperplane containing $\pmb{F}$. 
We define for $\alpha\in (0,1]$ and any $k$-simplex $\pmb{\sigma}$
\begin{equ}\label{def:mass}
h_{\pmb{\sigma}}:=\min_{ \pmb{F} \in \Bd(\pmb{\sigma})} \sup\{d(x, \langle \pmb{F} \rangle ) \ : \ x\in \pmb{\sigma} \}
\quad
\text{and}
\quad
{\mass}_\alpha^{k} (\pmb{\sigma}) := \max_{ \pmb{F} \in \Bd(\pmb\sigma)} \Vol^{k-1}( \pmb{F}) h_{\pmb{\sigma}}^{\alpha}\;.
\end{equ}
We observe that, 
\begin{equ}\label{eq:comparison0}
h_{\pmb{\sigma}}^{1-\alpha} {\mass}_\alpha^{k} (\pmb{\sigma}) = \frac{1}{k} \Vol^{k}(\pmb{\sigma})\;,
\end{equ}
and, whenever $\diam(\pmb{\sigma})\leq 1$,
\begin{equ}\label{eq:comparison1}
{\mass}^{k}_\alpha(\pmb{\sigma})\leq \diam (\pmb{\sigma})^{k-1+\alpha}   \wedge \big(k\Vol^k{(\pmb{\sigma})}\big)^\alpha \ ,
\end{equ} 
where the first upper bound follows from \eqref{def:mass} and the second from \eqref{eq:comparison0} and \eqref{eq:comparison1}.\footnote{
For the first note that $\Vol^{k-1}( \pmb{F})\leq \diam (\pmb{\sigma})^{k-1}$ and $h_{\pmb{\sigma}}\leq\diam (\pmb{\sigma})$, and for the second that $h_{\pmb{\sigma}}^{\alpha} \max_{ \pmb{F} \in \Bd(\pmb\sigma)} \Vol^{k-1}( \pmb{F}) \leq h_{\sigma}^{\alpha} \Big(\max_{ \pmb{F} \in \Bd(\pmb\sigma)} \Vol^{k-1}( \pmb{F})\Big)^\alpha = \big(k \Vol^{k}(\pmb{\sigma}) \big)^{\alpha}$ which holds since $\diam(\pmb{\sigma})\leq 1$.}

We also set $\mass_\infty^k (\pmb{\sigma})=0$ and $\mass_0^k(\pmb{\sigma})=1$.
\begin{remark}
We observe that the left-hand side of the inequality \eqref{eq:comparison1} is close to sharp when either the simplex $\pmb{\sigma}$ is equilateral or when $\pmb{\sigma}$ is a cone of small height over an equilateral simplex. 
\end{remark}

\begin{lemma}\label{lem:(alpha,k) mass}
Suppose we have $k$-simplices $\simplex, \simplex_1,\dots,\simplex_{n}$ such that $\diam (\simplex) \leq 1$ and $\simplex= \bigcup_{i=1}^{n} \simplex_i$, then
$${\mass}^k_\alpha(\pmb{\sigma}) \leq \sum_{i=1}^{n} {\mass}^k_\alpha(\pmb{\sigma}_i)\;.$$
\end{lemma}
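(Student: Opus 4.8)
The plan is to reduce the general statement to the special case of a subdivision and then handle that case by induction. First I would reduce to the situation where the $\simplex_i$ are non-overlapping: given an arbitrary covering $\simplex = \bigcup_{i=1}^n \simplex_i$, I can pass to a common refinement obtained by intersecting the simplices and re-triangulating the (convex) pieces $\simplex_i \cap \simplex_j$, thereby replacing each $\simplex_i$ by finitely many non-overlapping sub-simplices whose union is $\simplex_i$; since ${\mass}^k_\alpha$ over each such group of sub-simplices will itself be controlled by ${\mass}^k_\alpha(\simplex_i)$ once we know the non-overlapping (i.e.\ subdivision) case, it suffices to prove the inequality when $\{\simplex_i\}$ is a subdivision of $\simplex$. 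Moreover, by iterating a single "bisection"-type step it is enough to treat $n=2$, i.e.\ $\simplex = \simplex_1 \cup \simplex_2$ with $\Vol^k(\simplex_1 \cap \simplex_2) = 0$.

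For the two-piece case I would argue through the characterisation \eqref{eq:comparison}, namely $h_{\simplex}^{1-\alpha}\,{\mass}^k_\alpha(\simplex) = \tfrac1k \Vol^k(\simplex)$. Since $\simplex_1,\simplex_2$ are non-overlapping with union $\simplex$ we have $\Vol^k(\simplex) = \Vol^k(\simplex_1) + \Vol^k(\simplex_2)$, so
\[
{\mass}^k_\alpha(\simplex) = \frac{1}{k}\,h_{\simplex}^{-(1-\alpha)}\bigl(\Vol^k(\simplex_1) + \Vol^k(\simplex_2)\bigr) = h_{\simplex}^{-(1-\alpha)}\Bigl(h_{\simplex_1}^{1-\alpha}{\mass}^k_\alpha(\simplex_1) + h_{\simplex_2}^{1-\alpha}{\mass}^k_\alpha(\simplex_2)\Bigr).
\]
Thus the inequality ${\mass}^k_\alpha(\simplex) \le {\mass}^k_\alpha(\simplex_1) + {\mass}^k_\alpha(\simplex_2)$ would follow from $h_{\simplex_i} \le h_{\simplex}$ for $i=1,2$ whenever $1-\alpha \ge 0$ (which holds since $\alpha \in (0,1]$), using also $h_{\simplex} \le \diam(\simplex) \le 1$ so that the negative power $h_{\simplex}^{-(1-\alpha)} \ge 1$ does not hurt — more precisely we need $h_{\simplex_i}^{1-\alpha} \le h_{\simplex}^{1-\alpha}$, i.e.\ exactly $h_{\simplex_i}\le h_{\simplex}$. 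So the crux is the geometric claim: \emph{if $\simplex'$ is a sub-simplex of $\simplex$ (meaning $\simplex' \subset \simplex$, both $k$-dimensional in the same $k$-plane), then $h_{\simplex'}\le h_{\simplex}$.} I would prove this by the volume/base identity: for any $k$-simplex, $h_{\pmb\rho} = \tfrac{1}{k}\Vol^k(\pmb\rho)\big/\bigl(\max_{F\in\Bd(\pmb\rho)}\Vol^{k-1}(F)\bigr)$ is the smallest "width" of $\pmb\rho$, equivalently $h_{\pmb\rho} = \inf\{\,\mathrm{width of }\pmb\rho\mathrm{ in direction }u : u\in S^{k-1}\,\}$ where width in direction $u$ is the length of the projection of $\pmb\rho$ onto the line $\R u$. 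Since $\simplex'\subset\simplex$, the projection of $\simplex'$ onto any line is contained in that of $\simplex$, so the width of $\simplex'$ in every direction is at most that of $\simplex$; taking the infimum over directions gives $h_{\simplex'}\le h_{\simplex}$.

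The main obstacle I anticipate is establishing cleanly that $h_{\pmb\sigma}$ as defined in \eqref{def:mass} equals the minimal width (projection length) of the simplex over all directions $u \in S^{k-1}$ — the definition only takes a $\min$ over the $k+1$ boundary faces, and one must check this coincides with the infimum over all directions. This is where the identity $h_{\pmb\sigma}^{1-\alpha}{\mass}^k_\alpha(\pmb\sigma) = \tfrac1k\Vol^k(\pmb\sigma)$ from \eqref{eq:comparison} is crucial: it shows $h_{\pmb\sigma} = \tfrac1k\Vol^k(\pmb\sigma)/\max_{F}\Vol^{k-1}(F)$, and since the $k$-volume of a "prism-like" slab over a facet $F$ is base times width, the width of $\pmb\sigma$ perpendicular to $F$ equals $k\,\Vol^k(\pmb\sigma)/\Vol^{k-1}(F)$ only up to the simplex-vs-prism factor — so some care with constants is needed, but it is enough to know that the minimal-width direction of a simplex is always perpendicular to one of its facets, a standard fact. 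Once this lemma on $h$ is in hand, the induction on $n$ and the reduction to non-overlapping pieces are routine, using in each step that all intermediate sub-simplices still have diameter $\le \diam(\simplex)\le 1$.
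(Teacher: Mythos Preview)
Your core argument is the paper's: rewrite $\mass^k_\alpha(\simplex)$ via \eqref{eq:comparison} as a constant times $\Vol^k(\simplex)/h_{\simplex}^{1-\alpha}$, use (sub)additivity of $\Vol^k$ over the $\simplex_i$, and then invoke $h_{\simplex_i}\le h_{\simplex}$. The paper does this in one line for general $n$ and an arbitrary cover, so your preliminary reductions are unnecessary; note also that the reduction to non-overlapping pieces is circular --- after refining each $\simplex_i$ you would need $\sum_j \mass^k_\alpha(\text{sub-piece}_j)\le \mass^k_\alpha(\simplex_i)$, which is the inequality \emph{opposite} to the one being proved.

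The obstacle you flag is genuine, and the ``standard fact'' you invoke is false for $k\ge 3$. For the regular tetrahedron with vertices $(1,1,1),(1,-1,-1),(-1,1,-1),(-1,-1,1)$ every facet-normal width equals $h=4/\sqrt3$, yet the width in the direction $e_1$ is only $2$; so $h_{\simplex}$ is \emph{not} the minimal width. Worse, the monotonicity $h_{\simplex'}\le h_{\simplex}$ for $\simplex'\subset\simplex$ --- which both you and the paper rely on --- can itself fail for $k\ge 3$. Take $\simplex$ with vertices $(0,0,0),(1,0,0),(0,1,0),(\tfrac13,\tfrac13,\epsilon)$, so that $h_{\simplex}=\epsilon$. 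For small $\epsilon$ one can inscribe in $\simplex$ a tetrahedron $\simplext$ congruent to the one with vertices $(0,0,0),(2W,0,0),(W,W,H),(W,-W,H)$, with $H=3\epsilon/4$ and $W$ of order $1/32$; its four facets all have area $W\sqrt{H^2+W^2}$, hence $h_{\simplext}=2WH/\sqrt{H^2+W^2}\approx 2H=\tfrac32\epsilon>h_{\simplex}$. Thus the minimal-width route does not rescue the step $h_{\simplex_i}\le h_{\simplex}$, and that step --- asserted without proof in the paper and left as your ``main obstacle'' --- is where the argument actually breaks.
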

\begin{proof}
Using \eqref{eq:comparison0} we have
\[
{\mass}^k_\alpha(\pmb{\sigma}) = \frac{\Vol^k(\pmb{\sigma})}{k h^{1-\alpha}_{\pmb{\sigma}}}
\leq \sum_{i=1}^{n}  \frac{\Vol^k(\pmb{\sigma}_i)}{k h^{1-\alpha}_{\pmb{\sigma} }}
= \sum_{i=1}^{n}  {\mass}^k_\alpha(\pmb{\sigma}_i)   \frac{k h^{1-\alpha}_{\pmb{\sigma}_i}}{k h^{1-\alpha}_{\pmb{\sigma}}} 
\leq \sum_{i=1}^{n}  {\mass}^k_\alpha(\pmb{\sigma}_i)  
\]
where we used that
$h^{1-\alpha}_{\sigma_{i}} \leq h^{1-\alpha}_{\sigma}$, and that $\Vol^k(\cdot)$ is additive.
%
%
\end{proof}
\begin{lemma}\label{lem:diam_mass characterisation}
There exists a constant $C=C(k)$, such that$${\mass}^k_\alpha(\pmb{\sigma}) \leq \inf 
\Big\{  \sum_{j \in J}  \diam (\simplex_j)^{k-1+\alpha}: 
\pmb{\sigma} = \bigcup_{j \in J} \simplex_j  \Big\}  \leq C  {\mass}^k_\alpha(\pmb{\sigma}) \ .$$
for all $k$-simplices $\simplex$ with $\diam(\pmb{\sigma})\leq 1$ and $\alpha\in (0,1]$.
\end{lemma}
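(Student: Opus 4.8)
The lower bound is the content of Lemma~\ref{lem:(alpha,k) mass} combined with the second inequality in \eqref{eq:comparison}: for any decomposition $\pmb{\sigma} = \bigcup_{j} \simplex_j$ we have $\mass_\alpha^k(\pmb{\sigma}) \le \sum_j \mass_\alpha^k(\simplex_j) \le \sum_j \diam(\simplex_j)^{k-1+\alpha}$, and taking the infimum over decompositions gives the left-hand inequality. (Here one uses that each $\simplex_j \subset \pmb{\sigma}$ so $\diam(\simplex_j) \le \diam(\pmb{\sigma}) \le 1$, which is what makes the bound $\mass_\alpha^k(\simplex_j) \le \diam(\simplex_j)^{k-1+\alpha}$ from \eqref{eq:comparison} applicable — actually that bound holds without the diameter restriction, but Lemma~\ref{lem:(alpha,k) mass} itself needs $\diam(\pmb{\sigma})\le 1$.) So the real work is the upper bound: one must exhibit a single decomposition $\pmb{\sigma} = \bigcup_{j \in J}\simplex_j$ with $\sum_{j\in J}\diam(\simplex_j)^{k-1+\alpha} \le C(k)\,\mass_\alpha^k(\pmb{\sigma})$.

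For the upper bound, the plan is to use a geometry-adapted decomposition of $\pmb{\sigma}$ reflecting its ``flattest'' direction. Let $\pmb{F}$ be a boundary face realizing the minimum in the definition of $h_{\pmb{\sigma}}$, so $h := h_{\pmb{\sigma}}$ is the height of $\pmb{\sigma}$ over $\langle\pmb{F}\rangle$, and write $V := \Vol^{k-1}(\pmb{F}')$ for the face $\pmb{F}'$ realizing the maximum in the definition of $\mass_\alpha^k$, so $\mass_\alpha^k(\pmb{\sigma}) = \max_{\pmb{F}''\in\Bd(\pmb\sigma)}\Vol^{k-1}(\pmb{F}'') \cdot h^\alpha$. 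Pick an affine coordinate system in $\langle\pmb{\sigma}\rangle \cong \mathbb{R}^k$ whose last coordinate axis is orthogonal to $\langle\pmb{F}\rangle$, so that $\pmb{\sigma}$ is contained in a slab of width $\lesssim h$ in that last direction and has extent $\lesssim \diam(\pmb{\sigma}) =: D$ in the other directions. Cover this slab by $\lesssim (D/h + 1)^{k-1}$ boxes of the shape $[\,\cdot\,]^{k-1}\times[0,h]$ with all side lengths comparable to $h$ (when $h \le D$; the case $h \gtrsim D$, i.e. $\pmb{\sigma}$ roughly round, is trivial since then a single simplex works and $D^{k-1+\alpha} \lesssim \Vol^k(\pmb{\sigma}) \cdot D^\alpha \sim \mass_\alpha^k(\pmb{\sigma})\cdot D^{1-\alpha}h^{1-\alpha}/\cdots$ — one checks this directly from \eqref{eq:comparison}). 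Intersect each box with $\pmb{\sigma}$ and triangulate the (convex) intersection into $O_k(1)$ simplices, each of diameter $\lesssim_k h$. The total count is $N \lesssim_k (D/h)^{k-1}$, so
\begin{equ}
\sum_{j\in J}\diam(\simplex_j)^{k-1+\alpha} \lesssim_k \left(\frac{D}{h}\right)^{k-1} h^{k-1+\alpha} = D^{k-1} h^{\alpha}.
\end{equ}
It remains to bound $D^{k-1} h^{\alpha}$ by $C(k)\,\mass_\alpha^k(\pmb{\sigma}) = C(k)\,\max_{\pmb{F}''}\Vol^{k-1}(\pmb{F}'')\,h^{\alpha}$, i.e. to show $D^{k-1}\lesssim_k \max_{\pmb{F}''\in\Bd(\pmb{\sigma})}\Vol^{k-1}(\pmb{F}'')$. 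This last point is the geometric heart: one wants that some boundary face of a $k$-simplex has $(k-1)$-volume comparable to $\diam(\pmb{\sigma})^{k-1}$. Indeed, the longest edge of $\pmb{\sigma}$ has length $\sim D$; the boundary face $\pmb{F}''$ opposite the vertex ``most responsible'' for the diameter contains a pair of vertices at distance $\sim D$ (a standard pigeonhole: at least two of the $k+1$ vertices are at mutual distance $\ge D/2$, and at least one boundary face — in fact $k-1$ of them — contains both), and among faces containing a fixed long edge one can, by another pigeonhole over the remaining vertices, find one whose $(k-1)$-volume is $\gtrsim_k D^{k-1}$; alternatively, just note $\max_{\pmb F''}\Vol^{k-1}(\pmb F'') \gtrsim_k \Vol^{k-1}$ of the convex hull of any $k$ vertices, and pick $k$ vertices realizing the longest edge together with $k-2$ others, using that the full simplex has some face of nondegenerate shape relative to its diameter.

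The main obstacle, then, is precisely this purely combinatorial-geometric claim — $\max_{\pmb{F}\in\Bd(\pmb\sigma)}\Vol^{k-1}(\pmb{F}) \gtrsim_k \diam(\pmb{\sigma})^{k-1}$ — together with making the slab-covering count $N\lesssim_k (D/h)^{k-1}$ rigorous (triangulating box$\cap\pmb\sigma$ with a $k$-dependent but uniform number of pieces). Both are elementary but need care with the dimension-dependent constants; I would isolate the face-volume claim as a small sublemma, proved by induction on $k$ (project out the direction of the longest edge, apply the inductive bound in $\langle\pmb F\rangle\cong\mathbb R^{k-1}$ to the base, and multiply by the height of that edge's endpoint over the base, which is $\gtrsim_k D$ for a suitable choice of face). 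Everything else — the coordinate change, the covering of the slab, the convexity of box$\cap\pmb\sigma$ — is routine.
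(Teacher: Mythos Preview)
Your lower bound is fine and matches the paper. The upper bound, however, has a genuine gap: the ``geometric heart'' claim
\[
\max_{\pmb{F}\in\Bd(\pmb\sigma)}\Vol^{k-1}(\pmb{F})\ \gtrsim_k\ \diam(\pmb{\sigma})^{k-1}
\]
is \emph{false} for $k\ge 3$. Take any $k$-simplex whose vertices lie within $\epsilon$ of a fixed line segment of length $1$; for instance, with $k=3$, the vertices $(0,0,0),(1,0,0),(2,\epsilon,0),(3,0,\epsilon)$ give diameter $\sim 3$ while every boundary face is nearly degenerate with area $O(\epsilon)$. Your inductive sketch does not rescue this: a boundary face containing the longest edge can itself be arbitrarily thin, so the height factor you would need to be $\gtrsim D$ simply is not. (For $k=2$ the claim \emph{is} true, since the longest edge realises the diameter, which is presumably why it looked plausible.)

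The fix, which is essentially what the paper does, is to avoid comparing to $D^{k-1}$ altogether. Rather than covering the full $D\times\cdots\times D\times h$ slab with $(D/h)^{k-1}$ boxes, count only those $h$-boxes that actually meet $\pmb\sigma$. Since the face $\pmb F$ maximising $\Vol^{k-1}$ is also the face minimising the height (both equal $k\Vol^k(\pmb\sigma)/\Vol^{k-1}(\pmb F)$), one has $h=h_{\pmb\sigma}$, and the inradius of $\pmb\sigma$ satisfies $\rho\ge h/(k+1)$. An elementary covering argument (e.g.\ $\pmb\sigma + rB \subset (1+r/\rho)\pmb\sigma$ for convex bodies containing a $\rho$-ball) then gives that the number of $h$-boxes meeting $\pmb\sigma$ is $\lesssim_k \Vol^k(\pmb\sigma)/h^k = \tfrac{1}{k}\Vol^{k-1}(\pmb F)/h^{k-1}$. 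Triangulating the intersections yields directly
\[
\sum_j \diam(\simplex_j)^{k-1+\alpha}\ \lesssim_k\ \frac{\Vol^{k-1}(\pmb F)}{h^{k-1}}\cdot h^{k-1+\alpha}\ =\ \Vol^{k-1}(\pmb F)\,h^\alpha\ =\ \mass_\alpha^k(\pmb\sigma),
\]
with no need for the false sublemma.
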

\begin{proof}
The first inequality above follows by combining \eqref{eq:comparison1} and Lemma~\ref{lem:(alpha,k) mass}. 
For the second inequality above, observe that one can write
$\simplex= \bigcup_{i\in I} \simplex_i$ where $\{ \simplex_{i} \}_{i \in I}$ are non-overlapping, $\diam (\simplex_i) \leq h_{\simplex}$, and $|I| \lesssim_{k} \frac{ \Vol^{k-1}(\pmb{F})}{h_{\simplex}^{k-1}}$ -- here $\pmb{F} \in \Bd(\simplex)$ is where the maximum in the second definition of \eqref{def:mass} is achieved.\footnote{Let $v$ be the vertex of $\simplex$ such that $\pmb{F} = \simplex_{/v}$, such a subdivision of $\simplex$ can then be generated by appropriately subdividing $\pmb{F}$ into $\{\pmb{F}\}_{i}$ and taking the vertices of $\pmb{\sigma}_{i}$ to be given by the vertices of $\mathbf{F}_i$ along with $v$, and if necessary iterating this procedure.}

Using this subdivision, we have  
$$
\sum_{i \in I}  \diam (\simplex_i)^{k-1+\alpha} \lesssim_{k} \Vol^{k-1}(\pmb{F}) h_{\simplex}^{\alpha} = \mass_\alpha (\simplex)\;.
$$
\end{proof}

%
%

\subsection{Functions on chains}
\begin{definition}\label{def:kform}
We denote by $\Omega^k\subset(\mathcal{X}^k)^* $ the subspace of the
algebraic dual of $\mathcal{X}^k$ consisting of all elements $A\in (\mathcal{X}^k)^*$ 
such that the map $(\mathbb{R}^d)^{k+1 }\ni (v_0,\dots,v_k) \mapsto A[v_0,...,v_k] \in \mathbb{R}$
is Borel-measurable.
\end{definition} 
\begin{remark}\label{rem:relation}
Observe that any $A \in \Omega^{k}$ can canonically be identified with
a (measurable) map $A: \mathfrak{X}^k\to \mathbb{R}$ satisfying the following two properties 
\begin{itemize}
\item If $k \ge 1$, then for any oriented $k$-simplex $\osimplex$ one has $A(-\osimplex)= -A(\osimplex)$.
\item Given $\osimplex \in \mathfrak{X}^{k}$ and $\mathcal{K} | \osimplex$,  
\[
A(\osimplex) = \sum_{\tau \in \mathcal{K}} A(\tau)\;. 
\]
\end{itemize}
%
\end{remark}

Note that there is a natural corresponding space of $k$-chains $\mathcal{X}^{k}(\mathfrak{K})$ and additive maps $\Omega^{k}(\mathfrak{K})$.

\begin{lemma}\label{lem:equivalent norms}
Let $\alpha \in (0,1]$. Then, for any $A \in \Omega^k$, the following three conditions are equivalent: 
\begin{enumerate}
\item\label{item_1}
$|A(Q)|\lesssim \diam(Q)^{k-1 + \alpha} $ uniformly over  $Q\in \mathfrak{Q}_{\leq 1}^k$ and, for any Whitney decomposition $\big( {Q}^{i}_{2^{-n}} : n \in \mathbb{N}, i\in I_{n} \big)$ of $\sigma$, $\sum_{n\in \mathbb{N}} \sum_{ i\in I_n} A({Q}^{i}_{2^{-n}})$ converges absolutely to $A(\sigma)$. 
\item \label{item_2}
$|A(\sigma)|\lesssim \mass_{\alpha}^{k}(\sigma) \ , $ uniformly over $\sigma\in \mathfrak{X}_{\leq 1}^{k}$. 
\item\label{item_3} $|A(\sigma)|\lesssim \diam(\sigma)^{k-1 + \alpha}$, uniformly over $\sigma\in \mathfrak{X}_{\leq 1}^{k}$. 
\end{enumerate}
\end{lemma}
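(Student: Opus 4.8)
The strategy is to prove the cycle of implications $\eqref{item_2}\Rightarrow\eqref{item_3}\Rightarrow\eqref{item_1}\Rightarrow\eqref{item_2}$, using the geometric comparison estimates from the previous subsection as the main engine.

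First, $\eqref{item_2}\Rightarrow\eqref{item_3}$ is immediate: by the second inequality in \eqref{eq:comparison} we have $\mass_\alpha^k(\pmb{\sigma}) \leq \diam(\pmb{\sigma})^{k-1+\alpha}$, so any bound of the form $|A(\sigma)| \lesssim \mass_\alpha^k(\sigma)$ gives $|A(\sigma)| \lesssim \diam(\sigma)^{k-1+\alpha}$ at once, uniformly over $\sigma \in \mathfrak{X}_{\leq 1}^k$.

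Next, $\eqref{item_3}\Rightarrow\eqref{item_1}$. The bound on cubes follows because any $k$-cube $Q$ of side length $r \leq 1$ can be subdivided into $O_k(1)$ simplices each of diameter $\lesssim_k r = \diam(Q)$; applying \eqref{item_3} and additivity of $A$ (Remark~\ref{rem:relation}) gives $|A(Q)| \lesssim_k \diam(Q)^{k-1+\alpha}$. For the convergence statement, fix a Whitney decomposition $\big(Q^i_{2^{-n}} : n\in\mathbb{N}, i\in I_n\big)$ of $\sigma$. By Lemma~\ref{lem:covering}, the cubes at level $n$ have total $(k-1)$-dimensional ``area'' controlled by their number $|I_n|$, and a volume/distance count shows $|I_n| \lesssim_{k,\sigma} 2^{n(k-1)}$ (each such cube has side $2^{-n}$ and lies in the dyadic shell at distance $\asymp 2^{-n}$ from $\partial\pmb{\sigma}$, of $k$-volume $\lesssim 2^{-n}$, giving $\lesssim 2^{-n}/2^{-nk} = 2^{n(k-1)}$ cubes). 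Hence $\sum_{i\in I_n}|A(Q^i_{2^{-n}})| \lesssim |I_n|\, (2^{-n})^{k-1+\alpha} \lesssim 2^{n(k-1)} 2^{-n(k-1+\alpha)} = 2^{-n\alpha}$, which is summable in $n$ since $\alpha > 0$; this gives absolute convergence. That the sum equals $A(\sigma)$ then follows from the remainder estimate \eqref{whitney_remainder}: the chain $\sigma - \sum_{n=0}^N\sum_{i\in I_n} Q^i_{2^{-n}}$ equals $\sum_{j\in J_N}\simplex_j$ with $|J_N| \lesssim_k 2^{N(k-1)}$ and $\diam(\simplex_j) \leq 2^{-N}$, so by additivity and \eqref{item_3} its $A$-value is bounded by $|J_N|\,(2^{-N})^{k-1+\alpha} \lesssim 2^{-N\alpha} \to 0$.

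Finally, $\eqref{item_1}\Rightarrow\eqref{item_2}$. Given $\sigma \in \mathfrak{X}_{\leq 1}^k$, apply Lemma~\ref{lem:covering} to get a Whitney decomposition of $\pmb{\sigma}$; by \eqref{item_1}, $A(\sigma) = \sum_{n\in\mathbb{N}}\sum_{i\in I_n} A(Q^i_{2^{-n}})$, so $|A(\sigma)| \leq \sum_{n,i} |A(Q^i_{2^{-n}})| \lesssim \sum_n |I_n|\,(2^{-n})^{k-1+\alpha}$. The content here is to bound $\sum_n |I_n| (2^{-n})^{k-1+\alpha}$ by $\mass_\alpha^k(\pmb{\sigma})$ rather than merely by $\diam(\pmb{\sigma})^{k-1+\alpha}$; this requires exploiting that the Whitney cubes near a thin simplex are few. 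Concretely, using $h_{\pmb{\sigma}} \lesssim 2^{-n}$ forces the level-$n$ cubes to be empty, and for $2^{-n} \lesssim h_{\pmb{\sigma}}$ one has $|I_n| \lesssim_k \Vol^{k-1}(\pmb{F})\, 2^{n(k-1)}$ where $\pmb{F}$ is the face realizing the max in \eqref{def:mass} (the Whitney shell at scale $2^{-n}$ has $k$-volume $\lesssim \Vol^{k-1}(\pmb{F})\,2^{-n}$). Summing $\sum_{n:\,2^{-n}\lesssim h_{\pmb{\sigma}}} \Vol^{k-1}(\pmb{F})\,2^{n(k-1)} (2^{-n})^{k-1+\alpha} = \Vol^{k-1}(\pmb{F})\sum_{n:\,2^{-n}\lesssim h_{\pmb{\sigma}}} 2^{-n\alpha} \lesssim \Vol^{k-1}(\pmb{F})\, h_{\pmb{\sigma}}^{\alpha} = \mass_\alpha^k(\pmb{\sigma})$ (a geometric series dominated by its largest term since $\alpha>0$) yields the claim.

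\textbf{Main obstacle.} The routine part is the equivalence up to $\diam(\sigma)^{k-1+\alpha}$; the real work is the sharp counting of Whitney cubes at each dyadic scale in terms of $\Vol^{k-1}(\pmb{F})$ and $h_{\pmb{\sigma}}$, i.e. establishing the scale-by-scale bound $|I_n| \lesssim_k \Vol^{k-1}(\pmb{F})\,2^{n(k-1)}$ for $2^{-n} \lesssim h_{\pmb{\sigma}}$ and $I_n = \emptyset$ otherwise. This is where the geometry of the simplex genuinely enters, and it is what upgrades a crude diameter bound into the mass bound $\mass_\alpha^k$; once this is in hand, the summation is an elementary geometric series converging because $\alpha > 0$.
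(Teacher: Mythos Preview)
Your proof is correct and follows essentially the same route as the paper: the same cycle of implications, the same use of the Whitney decomposition and the remainder bound \eqref{whitney_remainder} from Lemma~\ref{lem:covering}, and the same splitting of a cube into $O_k(1)$ simplices for \eqref{item_3}$\Rightarrow$\eqref{item_1}. The only minor difference is in \eqref{item_1}$\Rightarrow$\eqref{item_2}: the paper bounds $|I_n|\lesssim \Vol^k(\mfd_n\pmb{\sigma})\,2^{nk}$ via the $k$-volume of dyadic shells $\mfd_n\pmb{\sigma}$ and then sums, whereas you go directly to $|I_n|\lesssim \Vol^{k-1}(\pmb{F})\,2^{n(k-1)}$ via the shell thickness; your version makes the geometric series in $2^{-n\alpha}$ appear more transparently, but the underlying idea is the same.
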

\begin{remark}
Note that the additional assumption in Item~\ref{item_1} on additivity over Whitney decompositions is needed due to the fact that one can in general not write a simplex as a union of finitely many cubes. 
\end{remark}
\begin{proof}
Throughout this proof we write $\eta = k - 1 + \alpha$. 
We first prove Item~\ref{item_1} implies Item~\ref{item_2}. 
We apply Lemma~\ref{lem:covering} and write $( Q^{i}_{2^{-n}} : n \in \mathbb{N}, i \in I_{n})$ for a Whitney decomposition of $\simplex$. 

We define
\begin{equ}
\mfd_{n} \pmb{\sigma}
:= 
\big\{ p\in \pmb{\sigma} \ : \ 2^{-n-1}\sqrt{k}
\le
\mathrm{dist} \big(p, \mathbb{R}^{d} \setminus \simplex \big) 
\le 4 \cdot 2^{-n+1} \sqrt{k} \big\}\ ,
\end{equ}
Next we note that  $2^{-n}>h_{\pmb{\sigma}} \Rightarrow  I_n = \emptyset$ and for uniform in $n \in \mathbb{N}$,  $|I_n|\lesssim_{k}  \Vol^{k}( \mfd_{n} \pmb{\sigma} ) 2^{nk}$.
It follows that

\begin{equs}\label{eqs:estimates_whitney}
|A(\sigma)| &\leq \sum_{n = 0}^{\infty} \sum_{i\in I_n} |A(Q_{2^{-n}}^{i})| 
 \lesssim \sum_{n=0}^{\infty} |I_n| 2^{-n\eta}
\lesssim \sum_{n \ge -\log_{2}(h_{\pmb{\sigma}})}  \Vol^{k}(\mfd_{n} \pmb{\sigma})    2^{-n(\eta - k)}\\
&\lesssim \Vol^{k}(\pmb{\sigma}) \max_{ n \ge -\log_{2}(h_{\pmb{\sigma}})}2^{-n(\eta -k)}
\leq \Vol^{k}(\pmb{\sigma})  h_{\pmb{\sigma}}^{\eta-k}
=
\frac{\Vol^{k}(\pmb{\sigma})}{h_{\pmb{\sigma}}  h_{\pmb{\sigma}}^{\eta-{k-1}}} 
\leq \mass_{\eta-(k-1)}^{k}(\pmb{\sigma})\;,
\end{equs}
where in the first inequality on the second line above we used that $\sum_{n = 0}^{\infty} \Vol^{k}(\mfd_{n} \pmb{\sigma}) \leq 4 \cdot \Vol^{k}(\pmb{\sigma})$. 
This concludes showing Item~\ref{item_1} implies Item~\ref{item_2}. 

The fact that Item~\ref{item_2} implies Item~\ref{item_3} follows from the last inequality in \eqref{eq:comparison1}.

Finally, to argue that Item~\ref{item_1}  follows from Item~\ref{item_3} we first observe that we can realize any $Q\in \mathfrak{Q}_{\leq 1}^k$ of diameter $r$  as a sum of $k!$ oriented simplices of diameter at most $r$ and thus the estimate on $A(Q)$ in Item~\ref{item_1} follows directly. 

For the rest of  Item~\ref{item_1}, given a Whitney decomposition of $\sigma$ we note that the absolute convergence of $\sum_{n \in \mathbb{N}} \sum_{i \in I_{n}} A(Q^{i}_{2^{-n}})$ follows from the estimate from the first part of Item~\ref{item_1} and proceeding as in \eqref{eqs:estimates_whitney}.
To show convergence to $A(\sigma)$, using the notation of \eqref{whitney_remainder} we have
\[
\Big|A(\sigma) - \sum_{n = 0}^{N} \sum_{ i\in I_n}   A({Q}^{i}_{2^{-n}}) \Big|\leq \sum_{j\in J_N}  |A(\sigma_j )| \lesssim 
2^{ -N(k-1 + \alpha)} 2^{N(k-1)} \xrightarrow[N \rightarrow \infty]{} 0\;.
\]
\end{proof}

\subsection{Simplicial sewing lemma}
In this subsection we state the promised simplicial sewing lemma -- Proposition~\ref{prop:sewing}. 
This tool will be used frequently in our analysis, but as mentioned earlier, the proof is deferred to Section~\ref{sec:sewing}. 

For a function $\Xi: \mathfrak{X}^k(\mathfrak{K})  \to \mathbb{R}$, a simplex $\sigma\in \mathfrak{X}^k(\mathfrak{K})$ and a subdivision $\mathcal{K} | \sigma$ we introduce the following quantity which measures to what degree $\Xi$ fails to be additive over $\mathcal{K}$:
\begin{equ}
\delta_{\mcK;\sigma} \Xi := \Xi(\sigma)-\sum_{\sigma' \in \mcK} \Xi(\sigma')\;.
\end{equ}
We now introduce a space of ``almost additive'' $\Xi$ which will be the domain of our sewing map.
\begin{definition}\label{def:germ}
Let $C_{2,k}^{{\eta,\gamma}}(\mathfrak{K})$ consist of all functions
$\Xi: \mathfrak{X}^k(\mathfrak{K})  \to \mathbb{R}$ satisfying  $\Xi(\sigma)=- \Xi(-\sigma)$ and
\[
\llbracket \Xi\rrbracket_{(\eta,\gamma);\mathfrak{K} }:= \llbracket \Xi\rrbracket_{\eta,\mathfrak{K}} +\llbracket \delta\Xi\rrbracket_{\gamma,\mathfrak{K}} <+ \infty\;,
\] 
where
\[
\llbracket  \Xi\rrbracket_{\eta,\mathfrak{K}}:= \sup_{\sigma \in \mathfrak{X}^k(\mathfrak{K})} \frac{|\Xi(\sigma)|}{\diam(\sigma)^{\eta} }\;,
\qquad
\llbracket  \delta\Xi\rrbracket_{\gamma,\mathfrak{K}}  := \sup_{\sigma \in \mathfrak{X}^k(\mathfrak{K})} \sup_{\mathcal{K}|\sigma} \frac{| \delta_{\mcK;\sigma} \Xi  |}{|\mathcal{K}| \diam(\sigma)^{\gamma}}\;,
\]
and the supremum over $\mathcal{K} | \sigma$ in the second expression runs over all subdivisions $\mathcal{K}$ of $\sigma$.
\end{definition}

\begin{prop}\label{prop:sewing}
Let $0<\eta\leq k <\gamma$. 
There exists a unique linear map
$\mathcal{I}: C_{2,k}^{\eta, \gamma}(\mathfrak{K}) \to \Omega^k(\mathfrak{K}),$
satisfying
 \begin{equ}\label{eq:sewing_estimates}
 |\mathcal{I} \Xi (\sigma)- \Xi (\sigma)|\lesssim \llbracket \delta\Xi\rrbracket_{\gamma,\mathfrak{K}} \diam(\sigma)^{\gamma}, \qquad 
\big|\mathcal{I} \Xi(\sigma) \big| \lesssim \llbracket \Xi\rrbracket_{(\eta,\gamma); \mfK}  \diam(\sigma)^\eta \;,
\end{equ} uniformly in $\sigma \in \mathfrak{X}_{\le 1}^{k}(\mathfrak{K})$. 
Furthermore, for any regular sequence of subdivisions $(\mathcal{K}_n)_{n \in \mathbb{N}}$ of $\sigma\in \mathfrak{X}^{k}(\mathfrak{K})$, 
$$\mathcal{I}\Xi(\sigma)= \lim_{n\to \infty} \sum_{\sigma'\in \mathcal{K}_n} \Xi(\sigma')\ .$$
\end{prop}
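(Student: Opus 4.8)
The plan is to follow the classical Young/Gubinelli sewing strategy, adapted to the simplicial setting. The starting point is a \emph{strongly regular method of subdivision}, i.e.\ a rule assigning to each simplex $\sigma$ a canonical subdivision into finitely many simplices of strictly smaller diameter, with good regularity properties (the existence of such a method is the content of Definition~\ref{def:reg_method} and follows from \cite{EG}). Iterating this method produces, for each $\sigma$ and each $m\in\mathbb{N}$, a subdivision $\mathcal{K}_m(\sigma)$ with $\diam(\mathcal{K}_m(\sigma))\to 0$ as $m\to\infty$. One then defines $\mathcal{I}\Xi(\sigma):=\lim_{m\to\infty}\sum_{\sigma'\in\mathcal{K}_m(\sigma)}\Xi(\sigma')$. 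The first key step is to show this limit exists: writing $S_m:=\sum_{\sigma'\in\mathcal{K}_m(\sigma)}\Xi(\sigma')$, the difference $S_{m+1}-S_m$ is a sum over $\sigma'\in\mathcal{K}_m(\sigma)$ of the sewing defects $\delta_{\mathcal{K}_1(\sigma');\sigma'}\Xi$, each bounded by $\llbracket\delta\Xi\rrbracket_{\gamma,\mathfrak{K}}\,|\mathcal{K}_1(\sigma')|\,\diam(\sigma')^{\gamma}$. Using the strong regularity to control $|\mathcal{K}_m(\sigma)|$, the cardinalities $|\mathcal{K}_1(\sigma')|$, and the geometric decay of diameters, together with the hypothesis $\gamma>k$, the series $\sum_m (S_{m+1}-S_m)$ converges absolutely; this simultaneously yields the first bound in \eqref{eq:sewing_estimates} by summing the tail from $m=0$.

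The second step is to check that $\mathcal{I}\Xi$ genuinely lies in $\Omega^k(\mathfrak{K})$, i.e.\ is additive over arbitrary subdivisions and antisymmetric under orientation reversal, and measurable. Antisymmetry is immediate from $\Xi(-\sigma)=-\Xi(\sigma)$ and the symmetry of the subdivision method. For additivity: given any subdivision $\mathcal{K}|\sigma$, one compares the iterated canonical subdivisions of $\sigma$ with the union of the iterated canonical subdivisions of the pieces $\sigma'\in\mathcal{K}$; both are regular sequences of subdivisions of $\sigma$, and a standard argument (common refinement, or directly estimating the mismatch via $\llbracket\delta\Xi\rrbracket_{\gamma}$ along a mesh-shrinking interpolation) shows the two limits agree, giving $\mathcal{I}\Xi(\sigma)=\sum_{\sigma'\in\mathcal{K}}\mathcal{I}\Xi(\sigma')$. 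This argument in fact proves more: for \emph{any} regular sequence $(\mathcal{K}_n)$ of subdivisions of $\sigma$, $\sum_{\sigma'\in\mathcal{K}_n}\Xi(\sigma')\to\mathcal{I}\Xi(\sigma)$, which is the last claim of the proposition — the point being that the defect between $\sum_{\sigma'\in\mathcal{K}_n}\Xi(\sigma')$ and $\sum_{\sigma'\in\mathcal{K}_n}\mathcal{I}\Xi(\sigma')=\mathcal{I}\Xi(\sigma)$ is bounded by $\sum_{\sigma'\in\mathcal{K}_n}\llbracket\delta\Xi\rrbracket_\gamma\diam(\sigma')^\gamma\lesssim\llbracket\delta\Xi\rrbracket_\gamma\sum_{\sigma'\in\mathcal{K}_n}\diam(\sigma')^\gamma\to 0$ by regularity (Definition~\ref{def:subdivision,mesh}, using $\gamma>k$). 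Measurability follows since $\mathcal{I}\Xi$ is a pointwise limit of measurable functions of the vertices.

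For the second estimate in \eqref{eq:sewing_estimates}, one combines $|\mathcal{I}\Xi(\sigma)-\Xi(\sigma)|\lesssim\llbracket\delta\Xi\rrbracket_\gamma\diam(\sigma)^\gamma\lesssim\llbracket\delta\Xi\rrbracket_\gamma\diam(\sigma)^\eta$ (valid since $\diam(\sigma)\le 1$ and $\eta\le k<\gamma$) with $|\Xi(\sigma)|\le\llbracket\Xi\rrbracket_\eta\diam(\sigma)^\eta$. Uniqueness is the final step: if $\mathcal{J}$ is another linear map satisfying the first bound in \eqref{eq:sewing_estimates}, then $D:=\mathcal{I}\Xi-\mathcal{J}\Xi\in\Omega^k$ satisfies $|D(\sigma)|\lesssim\diam(\sigma)^\gamma$ with $\gamma>k$; evaluating $D$ on a regular sequence of subdivisions of any $\sigma$ and using additivity of $D$ gives $D(\sigma)=\lim_n\sum_{\sigma'\in\mathcal{K}_n}D(\sigma')$ with $|\sum_{\sigma'\in\mathcal{K}_n}D(\sigma')|\lesssim\sum_{\sigma'\in\mathcal{K}_n}\diam(\sigma')^\gamma\to 0$, so $D\equiv 0$.

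I expect the main obstacle to be the combinatorial bookkeeping in the first step: controlling $|\mathcal{K}_m(\sigma)|$ and the diameters $\diam(\sigma')$ for $\sigma'\in\mathcal{K}_m(\sigma)$ along the iteration in a way that makes the defect series summable. A naive subdivision can produce simplices whose number grows too fast relative to the shrinkage of diameters, or can degenerate (producing slivers with uncontrolled face volumes), which is exactly why the notion of \emph{strongly regular} method of subdivision — and the nontrivial input from \cite{EG} guaranteeing one exists — is needed. Once that quantitative control is in hand, everything else is a routine adaptation of the one-dimensional sewing lemma.
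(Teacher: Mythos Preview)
Your proposal is correct and follows essentially the same route as the paper. One correction of emphasis: the convergence along the fixed method $\mathcal{M}$ (your ``first step'') is actually the easy part --- bounded eccentricity plus volume additivity give $\sum_{\sigma'\in\mathcal{M}_n(\sigma)}\diam(\sigma')^k\le\|\mathcal{M}\|\,\diam(\sigma)^k$ directly (see \eqref{eq:core_summation bound for regular methods}), so do not try to bound $|\mathcal{M}_n(\sigma)|$ and $\max\diam$ separately, as that can fail (e.g.\ for barycentric subdivision). The step you dismiss as ``standard'' (additivity via common refinement) is where the real work lies: one needs a quantitative bound on the cardinality of the common refinement of $\mathcal{M}_n(\sigma)$ and $\mathcal{M}'_n(\sigma)$ that grows only polynomially in $n$, and this is exactly where the condition $\vertiii{\mathcal{M}}<\infty$ in Definition~\ref{def:reg_method} enters (Lemma~\ref{lemma:common refinement}). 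The paper packages this as independence of $\mathcal{I}_{\mathcal{M}}$ under an equivalence relation $\mathcal{M}\sim\mathcal{M}'$ on methods (Lemma~\ref{lem:M_independent of equivalence class}), and then deduces additivity over an arbitrary $\mathcal{K}|\sigma$ by choosing $\mathcal{M}'\sim\mathcal{M}$ with $\mathcal{M}'_1(\sigma)=\mathcal{K}$ (Remark~\ref{rem:flexibility_remark}).
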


\begin{remark}
Note that the condition $\Xi(\sigma)=- \Xi(-\sigma)$ in Definition~\ref{def:germ} could be replaced by $|\Xi(\sigma)+ \Xi(-\sigma)|=o(\diam(\sigma)^k)$.
If one did not impose any relation between $\Xi(\sigma)$ and $\Xi(-\sigma)$ an analogue of Proposition~\ref{prop:sewing} still holds, but $\mathcal{I}\Xi$ would only be additive but not an element of $\Omega^k$ in general.
\end{remark}
{
\begin{remark}\label{rem:disclaimer}
In this remark we compare the conditions of our sewing lemma (Proposition~\ref{prop:sewing}) to the conditions of \cite[Lem.~14]{Harang2018}, ignoring the distinction between simplices and hyperrectangles. 
The conditions therein are that for $\alpha\in (0,1]^k, \beta\in (1, \infty)^k$,
$$|\Xi(Q)|\lesssim h_1^{\alpha_1}\cdot...\cdot h_k^{\alpha_k}\ , \qquad |\Xi(Q)-\Xi(Q^{j}_1)-\Xi(Q^{(j)}_2)|\lesssim h_1^{\alpha_1}\cdot...\cdot h_j^{\beta_j}\cdot ... h_k^{\alpha_k}\ ,$$
uniformly over hyperrectangles $Q$ with dimensions $h_1,...,h_k$ (i.e. $Q=x + [0,h_1]\times...\times[0,h_k]$ for some $x\in \mathbb{R}^k$) and any decomposition 
$Q= Q^{(j)}_1\cup Q^{(j)}_2$ into hyperrectangles by cutting along a hyperplane parallel to $\{x_j=0\}$ for any $j\in \{1,...,k\}$.
Formally translating our bounds into hyperrectangle results in
$$ |\Xi(Q)|\lesssim (\max_i h_i)^\eta \ , \qquad |\Xi(Q)-\Xi(Q^{j}_1)-\Xi(Q^{(j)}_2)|\lesssim (\max_i h_i)^{\gamma}\ .$$
We see that the bound we impose is much weaker on cubes where $\min_i h_i \ll \max_i h_i$, resulting in the higher required exponent $\gamma>k$ compared to $\min \beta_j>1$. 
Conceptually, the conditions in \cite{Harang2018} allow for a component by component approach to sewing, while our conditions require a genuinely $k$-dimensional argument.
 
Another difference with regards to \cite{Harang2018} is that therein different directions are allowed to scale differently. 
We believe our results could be formulated on $\mathbb{R}^d$ equipped with a non-trivial scaling $\fraks$ (cf.\ \cite{FS82, Hai14, MS23}),but we refrain from doing in order to give a clearer exposition.
\end{remark}
}

We state the following corollary of Proposition~\ref{prop:sewing}. 

\begin{corollary}\label{cor:norm_estimate_sewing}
In the setting of Proposition~\ref{prop:sewing}, if $ \alpha:=\eta-k+1\in (0,1]$, 
$$ \big|\mathcal{I} \Xi(\sigma) \big| \lesssim \llbracket \Xi\rrbracket_{(\eta,\gamma); \mfK}  \mass_{{\alpha}}^k (\sigma)\ .$$
\end{corollary}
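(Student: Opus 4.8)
The corollary upgrades the second sewing estimate in \eqref{eq:sewing_estimates} — which controls $|\mathcal{I}\Xi(\sigma)|$ by $\diam(\sigma)^\eta$ — to a bound by the geometric quantity $\mass_\alpha^k(\sigma)$, which can be much smaller than $\diam(\sigma)^\eta$ for thin simplices. The idea is to exploit the additivity of $\mathcal{I}\Xi \in \Omega^k(\mathfrak{K})$: since $\mathcal{I}\Xi$ is genuinely additive over subdivisions, we may bound $|\mathcal{I}\Xi(\sigma)|$ by choosing a \emph{good} subdivision of $\sigma$ into many nearly-equilateral pieces of small diameter and summing the per-piece estimate.

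\textbf{Key steps.} First, fix $\sigma \in \mathfrak{X}^k_{\le 1}(\mathfrak{K})$ and let $\pmb{F} \in \Bd(\pmb\sigma)$ be the boundary face realizing the maximum in the definition \eqref{def:mass} of $\mass_\alpha^k$, so $\mass_\alpha^k(\pmb\sigma) = \Vol^{k-1}(\pmb F)\, h_{\pmb\sigma}^\alpha$. Second, invoke the subdivision used in the proof of Lemma~\ref{lem:diam_mass characterisation}: write $\pmb\sigma = \bigcup_{i \in I} \pmb\sigma_i$ with the $\pmb\sigma_i$ non-overlapping, $\diam(\pmb\sigma_i) \lesssim_k h_{\pmb\sigma}$, and $|I| \lesssim_k \Vol^{k-1}(\pmb F)/h_{\pmb\sigma}^{k-1}$ (obtained by subdividing $\pmb F$ into $\sim \Vol^{k-1}(\pmb F)/h_{\pmb\sigma}^{k-1}$ pieces of diameter $\lesssim h_{\pmb\sigma}$ and coning each over the opposite vertex). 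Orient each $\sigma_i$ consistently with $\sigma$ so that $\{\sigma_i\}_{i\in I}$ is a subdivision $\mathcal{K}\,|\,\sigma$ in the sense of Definition~\ref{def:subdivision_2}. Third, use additivity of $\mathcal{I}\Xi$ (it lies in $\Omega^k(\mathfrak{K})$) together with the second estimate of \eqref{eq:sewing_estimates} applied to each piece:
\begin{equ}
|\mathcal{I}\Xi(\sigma)| \le \sum_{i \in I} |\mathcal{I}\Xi(\sigma_i)| \lesssim \llbracket \Xi\rrbracket_{(\eta,\gamma);\mfK} \sum_{i\in I} \diam(\sigma_i)^\eta \lesssim_k \llbracket \Xi\rrbracket_{(\eta,\gamma);\mfK}\, |I|\, h_{\pmb\sigma}^\eta\;.
\end{equ}
Fourth, substitute the bound on $|I|$ and recall $\eta = k-1+\alpha$:
\begin{equ}
|I|\, h_{\pmb\sigma}^\eta \lesssim_k \frac{\Vol^{k-1}(\pmb F)}{h_{\pmb\sigma}^{k-1}}\, h_{\pmb\sigma}^{k-1+\alpha} = \Vol^{k-1}(\pmb F)\, h_{\pmb\sigma}^\alpha = \mass_\alpha^k(\pmb\sigma)\;,
\end{equ}
which is exactly the claimed estimate. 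One should check that all diameters stay $\le 1$ so that the estimate \eqref{eq:sewing_estimates} (stated for $\sigma \in \mathfrak{X}^k_{\le 1}$) applies to the pieces; this holds since $h_{\pmb\sigma} \le \diam(\pmb\sigma) \le 1$ and $\diam(\pmb\sigma_i) \lesssim_k h_{\pmb\sigma}$, with any constant absorbed by further subdividing if necessary.

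\textbf{Main obstacle.} The one genuinely delicate point is constructing the subdivision with the stated control on both $\diam(\pmb\sigma_i)$ and $|I|$, and — crucially — arranging it to be a valid \emph{oriented} subdivision $\mathcal{K}\,|\,\sigma$ so that the additivity of $\mathcal{I}\Xi \in \Omega^k$ can be applied; this is precisely the construction already used in the proof of Lemma~\ref{lem:diam_mass characterisation} (cone over a subdivided boundary face), so it can simply be cited. Everything else is bookkeeping of constants depending only on $k$.
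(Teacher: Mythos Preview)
Your proof is correct. It differs from the paper's one-line argument, which simply cites Proposition~\ref{prop:sewing} together with Lemma~\ref{lem:equivalent norms}: the sewing proposition gives $\mathcal{I}\Xi\in\Omega^k$ and the bound $|\mathcal{I}\Xi(\sigma)|\lesssim \diam(\sigma)^{\eta}$, which is exactly Item~\ref{item_3} of Lemma~\ref{lem:equivalent norms}, and the lemma converts this into Item~\ref{item_2}, the $\mass_\alpha^k$ bound.

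The difference lies in how the implication ``$\diam$-bound $\Rightarrow$ $\mass_\alpha^k$-bound'' is established. The paper's Lemma~\ref{lem:equivalent norms} routes through cubes and a Whitney decomposition (Item~\ref{item_3} $\Rightarrow$ Item~\ref{item_1} $\Rightarrow$ Item~\ref{item_2}). You instead use the cone-over-a-subdivided-face construction from Lemma~\ref{lem:diam_mass characterisation} directly, which is more elementary and avoids Whitney altogether. Your approach is self-contained and arguably cleaner for this specific corollary; the paper's approach has the advantage of modularity, since Lemma~\ref{lem:equivalent norms} is stated once and reused elsewhere. A minor remark: the subdivision in Lemma~\ref{lem:diam_mass characterisation} already gives $\diam(\simplex_i)\le h_{\simplex}$ without an implicit constant, so your caveat about ``further subdividing if necessary'' is not actually needed.
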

\begin{proof}
This follows immediately from Proposition~\ref{prop:sewing} and Lemma~\ref{lem:equivalent norms}
\end{proof}
\begin{remark}\label{rem:non-atomic}
In \cite{AST24}, the authors write \say{It is worth emphasizing that this result might seem counter-intuitive...It is therefore not at all clear how one can expect to be nonatomic a limit of sums of germs which are essentially not so.} 
The proof of the corollary above clarifies this point. 

We also note that the threshold $\eta > k-1$ above  is expected  to be sharp,  cf.\ Proposition~\ref{prop:isomorphism}.
\end{remark}

\section{Distributional $k$-forms}\label{sec:distributional}
Given an element $A\in \Omega^k(\mathfrak{K})$, we set $\partial A\in \Omega^{k+1}(\mathfrak{K})$ to be defined by
$\big(\partial A\big)(\sigma):=  A(\partial \sigma)$ for all $\sigma \in \mathfrak{X}^{k+1}(\mathfrak{K})$, where the $\partial \sigma\in \mathcal{X}^{k}(\mathfrak{K})$ was defined in \eqref{eq:boundary op}. We now turn to the main definition of the article.
\begin{definition}\label{def:dist_form}
For $\alpha\in (0,1]$ and $\mathfrak{K} \subset \mathbb{R}^{d}$, we define the following norm on $\Omega^k(\mathfrak{K})$ 
$$\|A\|_{\alpha; \mathfrak{K}}
	=
	\sup_{\osimplex\in \mathfrak{X}_{\le 1}^k(\mathfrak{K})}
	\frac{|A(\osimplex)|}{\mass^k_\alpha(\osimplex)} \ \qquad
\text{ and }
\qquad \|A\|_{\infty; \mathfrak{K}} = \begin{cases} +\infty &\text{ if } A\neq 0, \\
0 &\text{ else.}
\end{cases}
$$
For $\alpha,\beta \in [0,1]\cup \{\infty\}$ set $\|A\|_{\alpha,\beta; \mathfrak{K}}:= \|A\|_{\alpha; \mathfrak{K}} + \|\partial A\|_{\beta; \mathfrak{K}}$ and 
%
\[ 
\Omega^{k}_{\alpha,\beta} (\mathfrak{K}):= \Big\{ A \in \Omega^k \ : \  \|A\|_{\alpha,\beta; \mathfrak{K}}< +\infty  \Big\}
%
\;.
\]
\end{definition}
\begin{remark}\label{rem:holder}
Note that $\Omega^{0}_{\alpha,\beta}=\Omega^{0}_{0,\beta} $ is canonically identified with $\beta$-H\"older functions, whenever $\beta \in (0,1]$. 
Also observe the inclusions $\Omega^k_{\alpha', \beta'} \subset \Omega^k_{\alpha, \beta} $ whenever $\alpha'\geq \alpha$, $\beta' \geq \beta$.

By construction the linear map 
\begin{equ}\label{eq:partial}
\partial: \Omega^{k}_{\alpha,\beta}\to \Omega^{k+1}_{\beta,\infty}\;.
\end{equ}
is bounded and 
the spaces $A\in\Omega^k_{\alpha,\infty}$ are a natural analogue of closed forms.
\end{remark}

When $\mathfrak{K} = \mathbb{R}^{d}$ we often suppress it from the notation, for instance writing $\|A\|_{\alpha}$ for $\|A\|_{\alpha, \mathbb{R}^{d}}$.
We also state most of our estimates below in terms of global norms $\| \cdot \|_{\alpha}$ to lighten notation, 
but it will be clear how they can be localized to compact $\mathfrak{K} \subset \mathbb{R}^{d}$.

\begin{remark}\label{rem:gauge}
When taking $k=1$ and $n=2$, the first term in the above definition corresponds to the growth norm $| \cdot |_{\gr\alpha}$ as in \cite[Def.~3.7]{CCHS22}, and the second term serves a similar purpose as $| \cdot |_{\tri\alpha}$ in \cite[Def.~3.10]{CCHS22}.
\end{remark}

\subsection{Estimates on chains and a generalised flat norm}\label{sec:estimates on chains}

In order to do analysis on $\mathcal{X}^k$, we introduce, for $X\in \mathcal{X}^k$, 
$$
| X |_{\alpha;k}:= \inf 
\Big\{  \sum_{j \in J} |c_j|  \cdot \text{\textbf{m}}_\alpha^k(\simplex_j): 
X= \sum_{j \in J} c_j \osimplex_j ,\ \osimplex_j \in \mathfrak{X}_{\le 1}^{k} \Big\}\;. 
$$
Clearly $| \cdot |_{\alpha;k}$ satisfies the triangle inequality. 

 
Using the above control, we now define a ``H\"{o}lder flat-norm'' on $\mathcal{X}^{k}(\mathfrak{K})$. 
\begin{definition}\label{def:flat_norm}
For $\alpha,\beta \in [0,1]\cup\{\infty\}$ define the $(\alpha,\beta)$-flat norm on $\mathcal{X}^{k}(\mathfrak{K})$ as
\begin{equ}\label{eq:flat_norm}
|X|_{(\alpha,\beta);\mathfrak{K}}
= 
\inf_{Z \in \mathcal{X}^{k+1}(\mathfrak{K})}
\Bigg\{
|X- \partial Z|_{\alpha;k} 
+ 
| Z |_{\beta;k+1}  
\Bigg\}\;.
\end{equ}
\end{definition}

\begin{remark}\label{rem:generalised flat norm}
The definition \eqref{eq:flat_norm} for $\alpha=\beta=1$ is  equivalent to the ``flat-norm'' introduced in \cite{Whi12}. For $\alpha=1$, $0<\beta< 1$ it is seen to be equivalent to the $(k-1+\beta)$-flat norm introduced in \cite{GaussGreen} using Lemma~\ref{lem:diam_mass characterisation}.
\end{remark} 

It is clear $|\cdot|_{(\alpha,\beta);\mathfrak{K}}$ satisfies the triangle inequality, that is for  $X,Y\in \mathcal{X}^{k}(\mathfrak{K})$
\begin{equ}\label{weak triangle inequality}
|X+Y|_{(\alpha,\beta);\mathfrak{K}}\leq |X|_{(\alpha,\beta);\mathfrak{K}} + |Y|_{(\alpha,\beta);\mathfrak{K}}\;.
\end{equ}

Similarly, for $Z \in  \mathcal{X}^{k+1}(\mathfrak{K})$, 
\begin{equ}\label{weak triangle2}
|X|_{(\alpha,\beta);\mathfrak{K}}\leq |X+\partial {Z}|_{(\alpha,\beta);\mathfrak{K}} + |Z|_{\beta, k+1}\;.
\end{equ}

\begin{lemma}\label{lem:pairing_bound}
For $A\in \Omega^k$ we have that 
$$\sup_{X\in \mathcal{X}^{k}(\mathfrak{K}) }\frac{|A(X)|}{|X|_{(\alpha, \beta);\mathfrak{K}}} \leq \|A\|_{(\alpha,\beta); \mathfrak{K}} \ . $$
\end{lemma}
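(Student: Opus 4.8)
The plan is to unwind both infima defining $|X|_{(\alpha,\beta);\mathfrak{K}}$ and bound the pairing $|A(X)|$ directly. Fix $A \in \Omega^k$ with $\|A\|_{(\alpha,\beta);\mathfrak{K}} < +\infty$ (if the norm is infinite there is nothing to prove), and fix $X \in \mathcal{X}^k(\mathfrak{K})$. First I would take an arbitrary $Z \in \mathcal{X}^{k+1}(\mathfrak{K})$ and an arbitrary representation $X - \partial Z = \sum_{j \in J} c_j \osimplex_j$ with $\osimplex_j \in \mathfrak{X}^k_{\le 1}$, together with an arbitrary representation $Z = \sum_{l \in L} d_l \osimplex_l$ with $\osimplex_l \in \mathfrak{X}^{k+1}_{\le 1}$. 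Since $A$ is additive on chains (Remark~\ref{rem:relation}), linearity gives
\[
A(X) = A(X - \partial Z) + A(\partial Z) = \sum_{j \in J} c_j A(\osimplex_j) + \sum_{l \in L} d_l (\partial A)(\osimplex_l)\;.
\]
Then applying the triangle inequality and the defining bounds $|A(\osimplex_j)| \le \|A\|_{\alpha;\mathfrak{K}}\, \mass^k_\alpha(\osimplex_j)$ and $|(\partial A)(\osimplex_l)| \le \|\partial A\|_{\beta;\mathfrak{K}}\, \mass^{k+1}_\beta(\osimplex_l)$, valid because all the simplices have diameter at most $1$, yields
\[
|A(X)| \le \|A\|_{\alpha;\mathfrak{K}} \sum_{j \in J} |c_j|\, \mass^k_\alpha(\osimplex_j) + \|\partial A\|_{\beta;\mathfrak{K}} \sum_{l \in L} |d_l|\, \mass^{k+1}_\beta(\osimplex_l)\;.
\]

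Next I would take the infimum over the representations of $X - \partial Z$ and of $Z$ separately, which turns the two sums into $|X - \partial Z|_{\alpha;k}$ and $|Z|_{\beta;k+1}$ respectively; using $\|A\|_{\alpha;\mathfrak{K}}, \|\partial A\|_{\beta;\mathfrak{K}} \le \|A\|_{(\alpha,\beta);\mathfrak{K}}$ we get
\[
|A(X)| \le \|A\|_{(\alpha,\beta);\mathfrak{K}} \big( |X - \partial Z|_{\alpha;k} + |Z|_{\beta;k+1} \big)\;.
\]
Finally, taking the infimum over $Z \in \mathcal{X}^{k+1}(\mathfrak{K})$ gives $|A(X)| \le \|A\|_{(\alpha,\beta);\mathfrak{K}}\, |X|_{(\alpha,\beta);\mathfrak{K}}$, and dividing and taking the supremum over $X$ (with the usual convention when $|X|_{(\alpha,\beta);\mathfrak{K}} = 0$, in which case one checks $A(X)=0$) concludes the proof.

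I do not expect a genuine obstacle here; the only mild subtleties are bookkeeping ones. One must make sure the ``$\beta$ or $\alpha$ equals $\infty$'' boundary cases are consistent: if $\beta = \infty$ then $\|A\|_{(\alpha,\beta);\mathfrak{K}} < \infty$ forces $\partial A = 0$, and the finiteness of $|X|_{(\alpha,\beta);\mathfrak{K}}$ with $\mass^{k+1}_\infty \equiv 0$ forces one to choose $Z$ with $|Z|_{\beta;k+1}<\infty$, i.e.\ $Z=0$ modulo the degenerate convention, so the term simply drops out; similarly for $\alpha = \infty$. The other point to be careful about is the $|X|_{(\alpha,\beta);\mathfrak{K}}=0$ case: here one argues that the bound $|A(X)| \le \|A\|_{(\alpha,\beta);\mathfrak{K}}\cdot\varepsilon$ holds for every $\varepsilon>0$, hence $A(X)=0$, so the ratio is interpreted as $0$ (or the supremum is taken only over $X$ with positive norm). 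Everything else is a direct application of additivity of $A$ together with the definitions of $\mass$, $|\cdot|_{\alpha;k}$, and the flat norm.
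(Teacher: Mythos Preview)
Your proof is correct and follows essentially the same route as the paper's: decompose $A(X) = A(X-\partial Z) + A(\partial Z)$, bound each piece via the defining inequalities for $\|A\|_{\alpha}$ and $\|\partial A\|_{\beta}$, then take the infima defining $|\cdot|_{\alpha;k}$, $|\cdot|_{\beta;k+1}$ and the flat norm. The paper's argument is slightly more terse (it first records $|A(Y)|\le \|A\|_{\alpha}|Y|_{\alpha}$ and the analogue for $\partial A$, then combines) and does not spell out the $\alpha,\beta\in\{\infty\}$ or $|X|_{(\alpha,\beta)}=0$ edge cases, but the substance is identical.
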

\begin{proof}
For $X\in \mathcal{X}^k$ and any decomposition $X= \sum_i c_i \sigma_i$
$$|A(X)|\leq \sum_i |c_i| |A(\sigma_i)| \leq \|A\|_{\alpha} \sum_i |c_i| \mass_\alpha^k (\sigma_i) $$
and thus taking an infimum over all decompositions gives $|A(X)|\leq \|A\|_{\alpha} |X|_\alpha$. 
Similarly, for any $Z\in \mathcal{X}^{k+1}$ 
$$|A(\partial Z)| \lesssim \|\partial A\|_{\beta} | Z |_{\beta}\ .$$
Thus, 
$$|A(X)|\leq |A(X + \partial Z)| + |A(\partial Z)| \leq  \|A\|_{\alpha} |X+ \partial Z|_\alpha  + \|\partial A\|_{\beta} | Z |_{\beta} \leq \|A\|_{(\alpha,\beta)}  \left(  |X+ \partial Z|_\alpha  +| Z |_{\beta} \right)\ .  $$
Taking the infimum over $Z \in \mathcal{X}^{k+1}(\mathfrak{K})$ concludes the claim.
\end{proof}


\begin{corollary}\label{lem:its a norm}
$|\cdot |_{(\alpha,\beta)}: \mathcal{X}^k \to \mathbb{R}$ is a norm.
\end{corollary}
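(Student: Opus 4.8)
The plan is to verify the three norm axioms for $|\cdot|_{(\alpha,\beta)}$: it is nonnegative and symmetric under $X \mapsto -X$ by inspection of the defining infimum \eqref{eq:flat_norm} (taking $Z = 0$ shows finiteness, and the infimum over $Z$ is symmetric since $\mathcal{X}^{k+1}$ is a group); the triangle inequality is already recorded in \eqref{weak triangle inequality}; and it clearly vanishes on $X = 0$ (take $Z = 0$). So the only substantive point is \emph{positive definiteness}: if $|X|_{(\alpha,\beta)} = 0$ then $X = 0$ in $\mathcal{X}^k$.

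For this, I would argue by contraposition using the pairing bound of Lemma~\ref{lem:pairing_bound}. Suppose $X \in \mathcal{X}^k(\mathfrak{K})$ is nonzero; I want to produce some $A \in \Omega^k$ with $\|A\|_{(\alpha,\beta);\mathfrak{K}} < \infty$ and $A(X) \neq 0$, since then Lemma~\ref{lem:pairing_bound} forces $|X|_{(\alpha,\beta);\mathfrak{K}} \geq |A(X)|/\|A\|_{(\alpha,\beta);\mathfrak{K}} > 0$. The natural candidate is to take a smooth compactly supported $k$-form $f = \sum_I f_I \, dx^I$ on $\mathbb{R}^d$ and let $A(\sigma) = \int_\sigma f$ be ordinary integration over simplices; such $A$ lies in $\Omega^k_{(1,1)} \subset \Omega^k_{(\alpha,\beta)}$ (its value on a simplex is bounded by $\Vol^k(\sigma)\|f\|_\infty \lesssim \mass^k_\alpha(\sigma)$ for $\diam \le 1$, and likewise $\partial A(\omega) = \int_\omega df$ is controlled since $df$ is again smooth). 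It then remains to see that smooth forms separate nonzero chains: writing a nonzero $X = \sum_j c_j \sigma_j$ with the $\sigma_j$ chosen so that no further cancellation is possible — e.g. representing $X$ by a chain whose simplices are non-overlapping or by reduction to a standard representative — one picks a point $p$ in the relative interior of some $\sigma_{j_0}$ not lying on any other $\sigma_j$ (possible after subdividing, using that $X \neq 0$ means some such interior region survives), and chooses $f$ supported in a small neighborhood of $p$ inside the hyperplane-neighborhood of $\sigma_{j_0}$ with $\int_{\sigma_{j_0}} f \neq 0$, so that $A(X) = c_{j_0}\int_{\sigma_{j_0}} f \neq 0$.

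The main obstacle is exactly this last separation step: one has to argue carefully that a nonzero element of the quotient $\mathcal{X}^k = \mathbb{Z}(\mathfrak{X}^k)/{\sim}$ cannot be annihilated by all smooth-form integrations, i.e. that the quotient by the gluing/orientation relations has not collapsed $X$ to zero even though the naive coefficients might cancel. The clean way is to note that integration of smooth forms is well-defined on $\mathcal{X}^k$ (it respects both relations in Definition~\ref{def:chain}, since $\int$ is additive under subdivision and odd under reversal of orientation), so it descends to a pairing $\Omega^\infty_c \times \mathcal{X}^k \to \mathbb{R}$; then one invokes (or records as a lemma) that this pairing is nondegenerate in the second variable — a standard fact, essentially because a chain is determined by its "current" and smooth forms are dense enough to detect any nonzero current. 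I would state this nondegeneracy as an elementary sub-claim and prove it by the localization argument sketched above: pass to a subdivision making the supports of the simplices appearing in $X$ either equal or non-overlapping, collect terms, and if the resulting reduced representation is nonzero, test against a bump form localized at an interior point of one surviving simplex.

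\textbf{Remark.} An alternative, avoiding any appeal to currents, is purely internal: one shows directly that if $X = \sum_j c_j \sigma_j$ does not reduce to $0$ under the relations of Definition~\ref{def:chain}, then after a common subdivision into a family $\{w_i\}$ of non-overlapping simplices one has $X = \sum_i d_i w_i$ with not all $d_i = 0$, and then the function $A$ assigning to a simplex $\tau$ the signed $k$-volume of $\tau \cap w_{i_0}$ (suitably interpreted, comparing orientations within the common hyperplane) — which is manifestly in $\Omega^k_{(1,1)}$ — satisfies $A(X) = d_{i_0}\Vol^k(w_{i_0}) \neq 0$. Either route reduces the corollary to Lemma~\ref{lem:pairing_bound} together with the existence of enough finite-mass forms.
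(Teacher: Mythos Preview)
Your proposal is correct and follows essentially the same route as the paper: both reduce positive definiteness to Lemma~\ref{lem:pairing_bound} by writing a nonzero $X$ as a sum of non-overlapping simplices and then separating it with a smooth form, using $\Omega^k_{(1,1)} \subset \Omega^k_{(\alpha,\beta)}$. The paper is simply terser---it asserts without further comment that ``one easily constructs $A \in \Omega^k_{(\alpha,\beta)}$ such that $|A(X)|>0$''---whereas you spell out the localization/bump-form argument (and an alternative via signed volume) that justifies this.
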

\begin{proof}
Consider $0\neq X\in \mathcal{X}^k$. Without loss of generality we can write for some $n\in \mathbb{N}$ 
$$X= \sum_{i=0}^n c_i \sigma_i$$ where $\Vol^{k}(\sigma_i\cap \sigma_j)=0$ whenever $i\neq j$.
Note that smooth differential forms are contained in $\Omega^{k}_{1,1}\subset \Omega^{k}_{\alpha,\beta}$, thus one easily constructs $A\in  \Omega^{k}_{\alpha,\beta}$ such that $|A(X)|>0$. In view of Lemma~\ref{lem:pairing_bound}, this implies $|X|_{(\alpha,\beta)}>0.$
\end{proof}

\begin{definition}\label{def:B^k}
Let $\mathcal{B}^{k}_{\alpha,\beta}$ be the completion of the normed $\mathbb{Z}$-module $(\mathcal{X}^k, |\cdot|_{(\alpha,\beta)})$.
\end{definition}
\begin{corollary}\label{cor:pairing}
The pairing
$\mathcal{X}^k \times \Omega^k_{\alpha,\beta}\to \mathbb{R}$ continuously extends to a map $\mathcal{B}_{\alpha,\beta}^{k} \times \Omega^k_{\alpha,\beta}\to \mathbb{R}$.
\end{corollary}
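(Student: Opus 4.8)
The plan is to extend the bilinear pairing $(X,A)\mapsto A(X)$ from $\mathcal{X}^k\times\Omega^k_{(\alpha,\beta)}$ to $\mathcal{B}^k_{(\alpha,\beta)}\times\Omega^k_{(\alpha,\beta)}$ by a standard density argument, using that $\mathcal{B}^k_{(\alpha,\beta)}$ is by construction (Definition~\ref{def:B^k}) the completion of $(\mathcal{X}^k,|\cdot|_{(\alpha,\beta)})$, and that Lemma~\ref{lem:pairing_bound} gives the crucial uniform bound
\[
|A(X)|\leq \|A\|_{(\alpha,\beta);\mathfrak{K}}\,|X|_{(\alpha,\beta);\mathfrak{K}}\qquad\text{for all }X\in\mathcal{X}^k,\ A\in\Omega^k_{(\alpha,\beta)}.
\]

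First I would fix $A\in\Omega^k_{(\alpha,\beta)}$ and note that by Lemma~\ref{lem:pairing_bound} the linear functional $X\mapsto A(X)$ on $(\mathcal{X}^k,|\cdot|_{(\alpha,\beta)})$ is bounded with operator norm at most $\|A\|_{(\alpha,\beta)}$. Since it is uniformly continuous and $\mathbb{R}$ is complete, it admits a unique continuous (indeed linear) extension to the completion $\mathcal{B}^k_{(\alpha,\beta)}$: for $\xi\in\mathcal{B}^k_{(\alpha,\beta)}$ pick any sequence $X_n\in\mathcal{X}^k$ with $|X_n-\xi|_{(\alpha,\beta)}\to 0$, observe $(A(X_n))_n$ is Cauchy in $\mathbb{R}$ because $|A(X_n)-A(X_m)|\leq\|A\|_{(\alpha,\beta)}|X_n-X_m|_{(\alpha,\beta)}$, and define $A(\xi):=\lim_n A(X_n)$; independence of the chosen sequence and linearity in $\xi$ are immediate. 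The extended functional still satisfies $|A(\xi)|\leq\|A\|_{(\alpha,\beta)}|\xi|_{(\alpha,\beta)}$, so in particular the pairing is continuous in the first argument.

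Next I would check bilinearity and joint continuity. Linearity and boundedness in $A$ for fixed $X\in\mathcal{X}^k$ are clear from the definitions; for fixed $\xi\in\mathcal{B}^k_{(\alpha,\beta)}$, approximating $\xi$ by $X_n\in\mathcal{X}^k$ and passing to the limit in $A\mapsto A(X_n)$ (which is linear) shows $A\mapsto A(\xi)$ is linear, and the uniform bound $|A(\xi)|\le\|A\|_{(\alpha,\beta)}|\xi|_{(\alpha,\beta)}$ gives separate continuity in each variable, hence (by bilinearity and the estimate) joint continuity on $\mathcal{B}^k_{(\alpha,\beta)}\times\Omega^k_{(\alpha,\beta)}$. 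Uniqueness of the extension follows because any two continuous extensions agree on the dense subset $\mathcal{X}^k\times\Omega^k_{(\alpha,\beta)}$.

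There is no real obstacle here — the statement is essentially the universal property of completion combined with the a priori estimate of Lemma~\ref{lem:pairing_bound}. The only point that warrants a sentence of care is that $\mathcal{B}^k_{(\alpha,\beta)}$ is a completion of a normed $\mathbb{Z}$-module rather than a vector space, so one should phrase the extension in terms of uniform continuity of $A(\cdot)$ with respect to the metric induced by $|\cdot|_{(\alpha,\beta)}$ rather than invoking linear-functional extension theorems; but since $\mathbb{R}$ is a complete metric space this causes no difficulty, and the resulting extension is automatically $\mathbb{Z}$-linear (and, in fact, $\mathbb{R}$-linear on the completion) by continuity.
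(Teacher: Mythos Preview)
Your proof is correct and follows exactly the same approach as the paper: the paper's proof simply observes that Lemma~\ref{lem:pairing_bound} exhibits each $A\in\Omega^k_{(\alpha,\beta)}$ as a bounded (hence continuous) functional on $(\mathcal{X}^k,|\cdot|_{(\alpha,\beta)})$, and leaves the standard extension-by-completion argument implicit. Your write-up spells out precisely that argument, including the minor care needed because $\mathcal{X}^k$ is a normed $\mathbb{Z}$-module rather than a vector space.
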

\begin{proof}
Lemma~\eqref{lem:pairing_bound} shows that elements of $\Omega^k_{\alpha,\beta}$ are elements of the continuous dual of 
$(\mathcal{X}^k, |\cdot|_{(\alpha,\beta)})$.
\end{proof}

Definition~\ref{def:B^k} together with Corollary~\ref{cor:pairing} gives a 
geometric integration theory similar to \cite{Whi12,GaussGreen}, see also more generally \cite{har98, har_notes}. For $\alpha=1$, $\beta\in (0,1]$ it agrees with the one in \cite{GaussGreen}.

\begin{remark}\label{rem: Manifolds in B}
By covering a closed manifold by images of (say) squares and approximating as in \cite[Sec.~2.1, Ex.~4]{har98} one sees that closed, oriented, $C^{1,\eta}$-manifolds are canonically contained in $\mathcal{B}_{\alpha,\beta}^{k}$ as soon as $\eta>0$. 
Similarly to the proof of Lemma~\ref{lem:mass bound_1}, one can check that a compact oriented $C^{1,\eta}$-manifold with boundary is in $\mathcal{B}_{\alpha,\beta}^{k}$ as soon as $\alpha>\frac{1}{1+\eta}$, see also Theorem~\ref{thm:stokes} and Remark~\ref{rem:twoways}.  
\end{remark}

%
%

\subsection{Further $(\alpha,\beta)$-flat norm estimates}

We collect some Lemmata about the $(\alpha,\beta)$-flat norm which will be useful later on.
\begin{lemma}\label{lem:distance_simplices_only one varying vertex}
Let $v_0,...v_k\in \mathbb{R}^d$ and $v'_0\in \mathbb{R}^d$ all be contained in a ball of radius $r>0$. Then, 
$$
\Big|[v_0,v_1,...,v_k]-[v'_0,v_1,...,v_k] \Big|_{(\alpha,\beta) } \lesssim_k r^{k-1} d(v_0,v_0')^\alpha + r^{k}  d(v_0,v_0')^\beta \ .$$
\end{lemma}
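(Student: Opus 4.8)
The plan is to build an explicit ``cobordism'' $Z \in \mathcal{X}^{k+1}$ filling the gap between the two simplices and estimate both $|X - \partial Z|_{\alpha;k}$ and $|Z|_{\beta;k+1}$ directly from the definition of $|\cdot|_{(\alpha,\beta)}$ in \eqref{eq:flat_norm}. The natural candidate is the $(k+1)$-simplex obtained by coning: set $Z := [v_0, v_0', v_1, \dots, v_k]$, viewed as an element of $\mathcal{X}^{k+1}$ (with some fixed orientation). A direct computation of $\partial Z$ from \eqref{eq:boundary op} gives
\[
\partial [v_0, v_0', v_1, \dots, v_k] = [v_0', v_1, \dots, v_k] - [v_0, v_1, \dots, v_k] + \big(\text{terms } [v_0, v_0', v_1, \dots, \hat v_i, \dots, v_k]\big)_{i=1}^{k}\;,
\]
so that $X - \partial Z$, with $X = [v_0, v_1, \dots, v_k] - [v_0', v_1, \dots, v_k]$, equals (up to sign) a sum of $k$ simplices of the form $W_i = \pm[v_0, v_0', v_1, \dots, \hat v_i, \dots, v_k]$, each of which has two vertices close together (distance $d(v_0, v_0')$) and all vertices in the ball of radius $r$.

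\textbf{Key estimates.}
The first step is to bound $|Z|_{\beta;k+1} \le \mass_\beta^{k+1}(\mathbf{Z})$. Since $Z$ is a $(k+1)$-simplex with $k$ of its ``base'' vertices in a ball of radius $r$ and the edge $v_0 v_0'$ of length $d(v_0,v_0')$, one face of $\mathbf{Z}$ is (essentially) $[v_0, v_1,\dots,v_k]$ of $(k-1)$-volume $\lesssim r^{k}$... more precisely, using $\mass_\beta^{k+1}(\mathbf{Z}) \le \diam(\mathbf{Z})^{k+\beta} \wedge \Vol^{k+1}(\mathbf{Z})^\beta$ from \eqref{eq:comparison} together with the fact that $\Vol^{k+1}(\mathbf{Z})$ is controlled by (base $k$-volume) $\times$ (height), one gets, choosing the face $\mathbf{F} = [v_0, v_1, \dots, v_k]$ in the definition \eqref{def:mass}, the bound $\Vol^k(\mathbf{F}) \lesssim r^k$ and $h_{\mathbf{Z}} \lesssim d(v_0, v_0')$, hence $|Z|_{\beta; k+1} \lesssim_k r^k d(v_0,v_0')^\beta$. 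The second step is to bound $|X - \partial Z|_{\alpha;k} \le \sum_{i=1}^{k} \mass_\alpha^k(\mathbf{W}_i)$; each $\mathbf{W}_i$ has a face of $(k-1)$-volume $\lesssim r^{k-1}$ (the one spanned by $v_0$ and $k-2$ of the $v_j$'s, or by $v_0'$ and those) and height from that face $\lesssim d(v_0,v_0')$, giving $\mass_\alpha^k(\mathbf{W}_i) \lesssim r^{k-1} d(v_0,v_0')^\alpha$, and summing the $k$ terms keeps the same form. Plugging both into \eqref{eq:flat_norm} yields the claim.

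\textbf{Caveats and the main obstacle.}
Two technical points need care. First, the definitions of $\mass^k_\alpha$ and of $|\cdot|_{\alpha;k}$ require $\diam \le 1$; so the clean statement above is really for $r \le 1$ (or one rescales and absorbs constants — in the regime $r$ bounded this is harmless, and for $r$ large the chain-level norm can be handled by subdividing, but the stated inequality is clearly meant in the small-scale regime). Second, and this is the genuine subtlety: when $v_0, v_0'$ and the $v_i$ are nearly affinely dependent, the cone $Z$ may be very degenerate (nearly zero volume), and one must make sure $\mass$ is still being estimated by the correct power of the \emph{height} rather than naively by $\diam^{k+1}$ — but this is exactly the content of the two-sided comparison in \eqref{eq:comparison}, and the estimate $\mass^{k+1}_\beta(\mathbf Z) = \Vol^{k+1}(\mathbf Z) \cdot h_{\mathbf Z}^{\beta - 1} / (k+1)$ (from the first identity in \eqref{eq:comparison}, with the roles of max-face adjusted) together with $\Vol^{k+1}(\mathbf Z) \le \frac{1}{k+1}\Vol^k(\mathbf F) \cdot h_{\mathbf Z}$ gives exactly $r^k d(v_0,v_0')^\beta$ with no degeneracy issue. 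The main obstacle is thus bookkeeping: correctly identifying, for each of the $k+1$ simplices $Z, W_1, \dots, W_k$, which boundary face realizes (up to constants depending only on $k$) the maximum in \eqref{def:mass} and verifying that the resulting product of ($(k-1)$- or $k$-volume) and (height)$^{\alpha}$ or (height)$^{\beta}$ has the claimed size, uniformly over the configuration of points.
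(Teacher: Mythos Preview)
Your approach is essentially identical to the paper's: both take $Z$ to be the coning $(k+1)$-simplex $[v_0',v_0,v_1,\dots,v_k]$, compute $X-\partial Z$ as the sum of the $k$ simplices $[v_0',v_0,v_1,\dots,\hat v_j,\dots,v_k]$, and bound each term via $\mass$ using that the minimum height $h$ is at most $d(v_0,v_0')$ while every face volume is $\lesssim r^{k-1}$ (resp.\ $r^k$). Your discussion is more detailed than the paper's three-line proof, and your caveat about $\diam\le 1$ is a fair technical point the paper leaves implicit; just note that in your sentence ``choosing the face $\mathbf{F}=[v_0,v_1,\dots,v_k]$ in the definition \eqref{def:mass}'' you should say instead that this face witnesses $h_{\mathbf Z}\le d(v_0,v_0')$ (since $h$ is a minimum), while the face-volume maximum is controlled because \emph{all} faces lie in the ball of radius $r$.
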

\begin{proof}
Consider $Z= [v'_0,v_0,v_1,...,v_k]$ and write $X= [v_0,v_1,...,v_k]-[v'_0,v_1,...,v_k]$. 
Then 
\begin{align*}
|X|_{(\alpha,\beta) } &\leq |X- \partial  Z|_{\alpha;k} 
+ 
| Z |_{\beta;k+1}  \\
&\leq \sum_{j=1}^k \big| [v'_0,v_0,v_1,...,\hat{v}_j,...,v_k] \big|_{\alpha;k}  + | Z |_{\beta;k+1} \\
&\leq k   r^{k-1} d(v'_0,v_0)^\alpha +  r^{k}d(v'_0,v_0)^\beta  \ .
 \end{align*}

\end{proof}

\begin{lemma}\label{lem:distance between simplices}
Consider simplices $\sigma= [v_0,...,v_k]$, $\sigma'= [v'_0,...,v'_k]$ contained in a ball of radius $r>0$. Let 
$\mu=\max \big\{|v_i-v_i'| \ \in \mathbb{R} : i=0,...,k \big\}$.
Then, 
$$|\sigma-\sigma'|_{(\alpha,\beta)} \lesssim r^{k-1} \mu^\alpha   + r^{k}\mu^\beta . $$
In particular, there exists $Z \in \mathcal{X}^{k+1}$ such that, uniform in $r, \mu$, one has 
\[
\| \sigma-\sigma' - \partial Z \|_{\alpha;k} \lesssim r^{k-1} \mu^\alpha\ , \qquad
\| Z \|_{\beta;k+1} \lesssim r^{k} \mu^\beta \ .
\]
\end{lemma}
\begin{proof}
We define $\sigma_i= [v_0,...,v_{i-1},v'_i, ...,v'_k]$ and note that $\sigma=\sigma_{k+1}$ and $\sigma'=\sigma_{0}$.
The statement follows from the using \eqref{weak triangle inequality} to write 
$|\sigma-\sigma'|_{(\alpha,\beta) }\leq \sum_{i=0}^{k} |\sigma_i-\sigma_{i+1}|_{(\alpha,\beta)} $ and then applying Lemma~\ref{lem:distance_simplices_only one varying vertex}.

The desired $Z \in \mathcal{X}^{k+1}$ can be chosen to be $\sum_{i=0}^{k} Z_{i}$ where $Z_i$ is given as in the proof of Lemma~\ref{lem:distance_simplices_only one varying vertex} applied to $\sigma_{i}$ and $\sigma_{i+1}$.
\end{proof}

Fix a family of maps $(\pi_{n})_{n \in \mathbb{N}}$, with  $\pi_{n}: \mathbb{R}^{d}\to 2^{-n} \mathbb{Z}^{d} \subset \mathbb{R}^{d}$, such that we have
$$ d(x,\pi_{n}x) \lesssim 2^{-n}\ ,$$
uniformly in $n\in \mathbb{N}$ and $x\in \mathbb{R}^{d}$. 
We extend $\pi_n$ to a map on simplices by setting
$$\pi_{n} [v_0,v_1,...,v_k]= [\pi_{n}v_0,\pi_{n}v_1,...,\pi_{n}v_k] \;. $$ 
Note 
$\pi_{n}$ naturally extends to $\mathcal{X}^{k}$.
The statements of Lemma~\ref{lem:distance between simplices} specializes to the following corollary.

\begin{corollary}\label{cor:dyadic_diff_est}
For any  $\sigma \in \mathfrak{X}^{k}$ and $m,n \in \mathbb{N}$, 
$$|\pi_{m}\sigma-\pi_{n}\sigma|_{(\alpha,\beta)} \lesssim \diam(\sigma)^{k-1} 2^{- \alpha(m \wedge n)} + \diam(\sigma)^{k} 2^{- \beta(m \wedge n)}\ .$$
In particular, there exists $Z \in \mathcal{X}^{k+1}$ such that, uniform in $m,n$,
\[
| \pi_{m}\sigma-\pi_{n}\sigma - \partial Z |_{\alpha;k} \lesssim \diam(\sigma)^{k-1} 2^{- \alpha(m \wedge n)}\ , \qquad
| Z |_{\beta;k+1} \lesssim \diam(\sigma)^{k} 2^{- \beta(m \wedge n)} \ .
\]	
\end{corollary}

\section{Multiplication with regular functions}\label{sec:multiplication}
For a smooth function $f: \mathbb{R}^{d}\to \mathbb{R}$ and a smooth differential $k$-form $A$, one can easily see that the pointwise product $f\cdot A$ is  characterised by 
\begin{equ}\label{eq:def_product}
\int_{\sigma} f\cdot A = 
\lim_{n\to \infty} \sum_{\sigma'\in \mathcal{K}_n}  \Big( \frac{1}{k+1} \sum_{i=0}^k f(v^{\sigma'}_i)  \cdot \  \int_{\sigma'} A \Big) 
\end{equ}
where $(\mathcal{K}_n)_{n \in \mathbb{N}}$ is any regular sequence of subdivisions of $\sigma$ and $\sigma'=[v^{\sigma'}_0,...,v^{\sigma'}_k]$.
In order to define a product 
$$C^\beta \times \Omega^k_\alpha \to \Omega^{k}_\alpha, \qquad (f, A)\mapsto f\cdot A $$
we mimic the right-hand side of \eqref{eq:def_product}, but allow for slightly more general approximations.\footnote{
Note that $\Xi^{f\cdot A} [v_0,...,v_k]=\frac{1}{k+1} \sum_{i=0}^k f(v_i) A[v_0,...,v_k]$ is clearly a special case of \eqref{eq:prod_increment}.}
For this consider a family $\mu=\{\mu_{\sigma}\}_{\sigma\in \mathfrak{X}^k}$ of probability measures
 where each $\mu_{\sigma}$ is supported on $\pmb{\sigma}$ and independent of the orientation of $\sigma$, and set
\begin{equ}\label{eq:prod_increment}
\Xi^{f\cdot A}_\mu(\sigma):= \mu_\sigma(f)  \cdot A(\sigma) \;.
\end{equ}

\begin{theorem}\label{thm:multiplication}
Let $\alpha,\gamma \in (0,1]$ such that $\alpha+\gamma>1$ and $\mu=\{\mu_{\sigma}\}_{\sigma\in \mathfrak{X}^k}$ as above.  Then, $\Xi^{f\cdot A}_\mu\in C^{{k-1+\alpha},\gamma+ k-1 +\alpha}_{2,k}$ for every $f\in C^\gamma$, $A \in \Omega^{k}_{\alpha,\beta}$. 

It follows that, with the map $\mathcal{I}$ as in Proposition~\ref{prop:sewing}, 
$$f\cdot A:=\mathcal{I}\Xi^{f\cdot A}_\mu  \in \Omega^k_{\alpha, (\alpha+ \gamma-1) \wedge \beta}\;,$$
is in fact independent of the choice of $\mu$ and satisfies
\begin{equ}\label{eq:prod_bound}
\| f\cdot A\|_{\alpha} \lesssim \| f\|_{C^\gamma} \|A\|_{\alpha}, \qquad \| \partial (f\cdot A) \|_{(\alpha+ \gamma-1) \wedge \beta}\lesssim  \|f\|_{C^\gamma}\|A\|_{(\alpha,\beta)} \ .
\end{equ}
\end{theorem}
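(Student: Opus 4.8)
The plan is to verify the two seminorm bounds defining membership in $C^{\alpha,\gamma+k-1+\alpha}_{2,k}$ for $\Xi:=\Xi^{f\cdot A}_\mu$, then apply Proposition~\ref{prop:sewing} together with Corollary~\ref{cor:norm_estimate_sewing} to obtain $f\cdot A$ and its norm bound, and finally treat the boundary term.

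\textbf{Step 1: The $\eta$-bound.} With $\eta=\alpha$ (note $k=0$ is excluded since $\alpha+\gamma>1$ forces $k\ge 1$ is not needed, but the statement is for general $k$; here $\eta=k-1+\alpha$ actually — let me restate) we want $|\Xi(\sigma)|\lesssim \diam(\sigma)^{k-1+\alpha}$. Since $\mu_\sigma$ is a probability measure supported on $\pmb\sigma$, $|\mu_\sigma(f)|\le \|f\|_{L^\infty}\le\|f\|_{C^\gamma}$, and $|A(\sigma)|\le\|A\|_\alpha \mass_\alpha^k(\sigma)\le\|A\|_\alpha\diam(\sigma)^{k-1+\alpha}$ by \eqref{eq:comparison}. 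Hence $\llbracket\Xi\rrbracket_{k-1+\alpha}\lesssim\|f\|_{C^\gamma}\|A\|_\alpha$. This is the routine part.

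\textbf{Step 2: The $\delta$-bound — the main obstacle.} We must show $|\delta_{\mcK;\sigma}\Xi|\lesssim |\mcK|\,\diam(\sigma)^{\gamma+k-1+\alpha}\,\|f\|_{C^\gamma}\|A\|_{(\alpha,\beta)}$. Writing $\delta_{\mcK;\sigma}\Xi=\mu_\sigma(f)A(\sigma)-\sum_{\sigma'\in\mcK}\mu_{\sigma'}(f)A(\sigma')$ and using additivity of $A$ over $\mcK$ (Remark~\ref{rem:relation}), we get $\delta_{\mcK;\sigma}\Xi=\sum_{\sigma'\in\mcK}\big(\mu_\sigma(f)-\mu_{\sigma'}(f)\big)A(\sigma')$. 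Since both measures are supported in $\pmb\sigma$ which has diameter $\le\diam(\sigma)$, and $f\in C^\gamma$, we have $|\mu_\sigma(f)-\mu_{\sigma'}(f)|\le\|f\|_{C^\gamma}\diam(\sigma)^\gamma$. Then $|\delta_{\mcK;\sigma}\Xi|\le\|f\|_{C^\gamma}\diam(\sigma)^\gamma\sum_{\sigma'\in\mcK}|A(\sigma')|\le\|f\|_{C^\gamma}\diam(\sigma)^\gamma\|A\|_\alpha\sum_{\sigma'\in\mcK}\diam(\sigma')^{k-1+\alpha}\le|\mcK|\,\|f\|_{C^\gamma}\|A\|_\alpha\diam(\sigma)^{\gamma+k-1+\alpha}$, using $\diam(\sigma')\le\diam(\sigma)$ for each $\sigma'\in\mcK$. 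This gives $\gamma:=\gamma+k-1+\alpha$ in the Definition~\ref{def:germ} sense, and we need $\gamma+k-1+\alpha>k$, i.e. $\gamma+\alpha>1$, which is our hypothesis; also $\eta=k-1+\alpha\le k$ since $\alpha\le1$. So $\Xi\in C^{k-1+\alpha,\gamma+k-1+\alpha}_{2,k}$.

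\textbf{Step 3: Applying sewing and identifying norms.} Proposition~\ref{prop:sewing} produces $f\cdot A:=\mathcal I\Xi$, and the regular-sequence formula in Proposition~\ref{prop:sewing} shows independence of $\mu$: any two choices $\mu,\mu'$ give $\Xi,\Xi'$ with $\Xi-\Xi'$ having $\llbracket\cdot\rrbracket_{k-1+\alpha'}$-seminorm $0$ for some $\alpha'$... more precisely $|\Xi(\sigma)-\Xi'(\sigma)|=|\mu_\sigma(f)-\mu'_\sigma(f)||A(\sigma)|\lesssim\|f\|_{C^\gamma}\diam(\sigma)^{\gamma+k-1+\alpha}=o(\diam(\sigma)^k)$, so by uniqueness in Proposition~\ref{prop:sewing} (or the regular-sequence limit) $\mathcal I\Xi=\mathcal I\Xi'$. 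Corollary~\ref{cor:norm_estimate_sewing} with $\alpha=\eta-k+1$ gives $|f\cdot A(\sigma)|\lesssim\|f\|_{C^\gamma}\|A\|_\alpha\mass_\alpha^k(\sigma)$, i.e. the first bound in \eqref{eq:prod_bound}, and in particular $f\cdot A\in\Omega^k$ with the stated $\alpha$-regularity.

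\textbf{Step 4: The boundary term.} It remains to show $\partial(f\cdot A)\in\Omega^{k+1}_{(\alpha+\gamma-1)\wedge\beta,\infty}$ with the quantitative bound. The idea is to express $\partial(f\cdot A)$ as a sewing of a germ on $(k+1)$-simplices. For $\omega\in\mathfrak X^{k+1}$, one expects $\partial(f\cdot A)(\omega)$ to be the $\mathcal I$-image of the germ $\Theta(\omega):=\mu_\omega(f)\,\partial A(\omega)$ (plus the already-controlled $f\cdot A$ restricted to $\partial\omega$ which telescopes). Concretely, using the regular-sequence characterization, $\partial(f\cdot A)(\omega)=f\cdot A(\partial\omega)=\lim_n\sum_{\sigma'\in\mcK_n|\partial\omega}\mu_{\sigma'}(f)A(\sigma')$ for a compatible regular subdivision of $\partial\omega$; comparing with $\mu_\omega(f)\partial A(\omega)=\mu_\omega(f)\lim_n\sum_{\sigma'}A(\sigma')$ and using $|\mu_{\sigma'}(f)-\mu_\omega(f)|\lesssim\|f\|_{C^\gamma}\diam(\omega)^\gamma$ together with $|A(\sigma')|\lesssim\|A\|_\alpha\diam(\sigma')^{k-1+\alpha}$ and the regularity of the subdivision, one identifies $\partial(f\cdot A)(\omega)=\mathcal I\Theta(\omega)$ where $\Theta(\omega)=\mu_\omega(f)\partial A(\omega)$. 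Now $\Theta$ satisfies $|\Theta(\omega)|\lesssim\|f\|_{C^\gamma}\|\partial A\|_\beta\mass_\beta^{k+1}(\omega)\lesssim\|f\|_{C^\gamma}\|A\|_{(\alpha,\beta)}\diam(\omega)^{k+\beta}$, and its $\delta$-seminorm is controlled exactly as in Step~2: $\delta_{\mcK;\omega}\Theta=\sum_{\omega'\in\mcK}(\mu_\omega(f)-\mu_{\omega'}(f))\partial A(\omega')$, giving $|\delta_{\mcK;\omega}\Theta|\lesssim|\mcK|\,\|f\|_{C^\gamma}\|\partial A\|_\beta\diam(\omega)^{\gamma+k-1+\alpha}$ — wait, one must be careful: $\partial A$ has regularity index $\beta$, so $|\partial A(\omega')|\lesssim\diam(\omega')^{k+\beta}$... actually $\mass_\beta^{k+1}(\omega')\le\diam(\omega')^{k+\beta}$, hence $|\delta_{\mcK;\omega}\Theta|\lesssim|\mcK|\,\|f\|_{C^\gamma}\|\partial A\|_\beta\diam(\omega)^{\gamma+k+\beta}$. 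Then $\Theta\in C^{k+\beta,\;(\gamma+k+\beta)\wedge(\text{something})}_{2,k+1}$; but we also need the $\eta$-index to reflect the gain. Actually the cleaner route is: $\Theta\in C^{\eta',\gamma'}_{2,k+1}$ with $\eta'=k-1+\big((\alpha+\gamma-1)\wedge\beta\big)+1=k+(\alpha+\gamma-1)\wedge\beta$? This does not immediately follow from $|\Theta(\omega)|\lesssim\diam(\omega)^{k+\beta}$ alone. The correct statement is that $\partial(f\cdot A)$, being in the image of $\partial$, automatically lies in $\Omega^{k+1}_{(\cdot,\infty)}$ once one shows the single bound $|\partial(f\cdot A)(\omega)|\lesssim\|f\|_{C^\gamma}\|A\|_{(\alpha,\beta)}\mass^{k+1}_{(\alpha+\gamma-1)\wedge\beta}(\omega)$. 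To get this, decompose $\partial(f\cdot A)(\omega)=\mu_\omega(f)\partial A(\omega)+\big(\partial(f\cdot A)(\omega)-\mu_\omega(f)\partial A(\omega)\big)$; the first term is bounded by $\|f\|_{C^\gamma}\|\partial A\|_\beta\mass_\beta^{k+1}(\omega)$ directly, and the second term is $\mathcal I$ applied to the germ $\omega\mapsto\Theta(\omega)-\mu_\omega(f)\partial A(\omega)$ which has $\delta$-seminorm order $\diam(\omega)^{\gamma+k-1+\alpha}$ and vanishing value, so Corollary~\ref{cor:norm_estimate_sewing} bounds it by $\|f\|_{C^\gamma}\|A\|_\alpha\mass_{\gamma+\alpha-1}^{k+1}(\omega)$ (using $\eta-k = \gamma+\alpha-1 \in(0,1)$ from $\gamma+\alpha>1$ and $\gamma,\alpha\le1$). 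Taking the worse of the two regularities $\beta$ and $\gamma+\alpha-1$ yields the claimed $\| \partial(f\cdot A)\|_{(\alpha+\gamma-1)\wedge\beta}\lesssim\|f\|_{C^\gamma}\|A\|_{(\alpha,\beta)}$.

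The main obstacle I anticipate is the rigorous identification of $\partial(f\cdot A)$ as a sewing of the germ $\mu_\omega(f)\partial A(\omega)$ (Step 4) — i.e. showing that differentiation and the sewing map commute in this coordinate-free simplicial setting, which requires matching regular subdivisions of $\omega$ with induced regular subdivisions of $\partial\omega$ and carefully passing to the limit. Everything else is a direct application of Proposition~\ref{prop:sewing}, Corollary~\ref{cor:norm_estimate_sewing}, and the Hölder estimate on $f$ against measures supported in a small simplex.
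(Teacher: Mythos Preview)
Your Steps~1--3 are correct and essentially identical to the paper's argument: the $\eta$-bound, the $\delta$-bound via additivity of $A$, the application of Proposition~\ref{prop:sewing} and Corollary~\ref{cor:norm_estimate_sewing}, and the independence of $\mu$ via uniqueness all match.

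Step~4, however, contains a genuine error and a gap. First, the germ you write, $\omega\mapsto\Theta(\omega)-\mu_\omega(f)\partial A(\omega)$, is identically zero since you defined $\Theta(\omega)=\mu_\omega(f)\partial A(\omega)$; so ``$\mathcal I$ applied to it'' is zero and cannot equal the (generally nonzero) remainder $\partial(f\cdot A)(\omega)-\mu_\omega(f)\partial A(\omega)$. Second, even under the charitable reading that you want to identify $\partial(f\cdot A)=\mathcal I\Theta$ and then invoke the sewing bound $|\mathcal I\Theta(\omega)-\Theta(\omega)|\lesssim\diam(\omega)^{\gamma'}$, this requires $\Theta$ to be sewable on $(k+1)$-simplices, i.e.\ its $\delta$-exponent $\gamma+k+\beta$ must exceed $k+1$. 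That forces $\gamma+\beta>1$, which is \emph{not} assumed (only $\gamma+\alpha>1$ is), so the argument fails for small $\beta$.

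The paper avoids both issues by never re-sewing on $(k+1)$-simplices. Instead it bounds the remainder \emph{face-by-face} using the already-proven estimate on $k$-simplices. Writing $\Xi^{\partial(f\cdot A)}_\mu(\omega):=\sum_{F\in\Bd(\omega)}\Xi^{f\cdot A}_\mu(F)$, one has
\[
\big|\partial(f\cdot A)(\omega)-\Xi^{\partial(f\cdot A)}_\mu(\omega)\big|
\le\sum_{F\in\Bd(\omega)}\big|(f\cdot A)(F)-\Xi^{f\cdot A}_\mu(F)\big|
\lesssim\llbracket\delta\Xi^{f\cdot A}_\mu\rrbracket_{\gamma+k-1+\alpha}\,\diam(\omega)^{\gamma+k-1+\alpha}
\]
directly from the first sewing estimate \eqref{eq:sewing_estimates}. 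Then, introducing an auxiliary family $\bar\mu=\{\bar\mu_\omega\}_{\omega\in\mathfrak X^{k+1}}$, decompose
\[
\Xi^{\partial(f\cdot A)}_\mu(\omega)=\underbrace{\sum_{F\in\Bd(\omega)}\big(\mu_F(f)-\bar\mu_\omega(f)\big)A(F)}_{\lesssim\|f\|_{C^\gamma}\|A\|_\alpha\diam(\omega)^{\gamma+k-1+\alpha}}+\underbrace{\bar\mu_\omega(f)\,\partial A(\omega)}_{\lesssim\|f\|_{L^\infty}\|\partial A\|_\beta\diam(\omega)^{k+\beta}}\,.
\]
Combining gives $|\partial(f\cdot A)(\omega)|\lesssim\|f\|_{C^\gamma}\|A\|_{(\alpha,\beta)}\diam(\omega)^{k+(\alpha+\gamma-1)\wedge\beta}$, and Lemma~\ref{lem:equivalent norms} upgrades this to the $\mass$-bound. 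No hypothesis on $\gamma+\beta$ is needed because the only sewing used is the one you already did on $k$-simplices.
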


\begin{proof}
To verify that $\Xi^{f\cdot A}\in C^{\alpha,\gamma+ k-1+ \alpha}_{2,k}$, observe that
\[
 |\Xi^{f\cdot A}_\mu(\sigma)|\leq \|f\|_{L^\infty} |A(\sigma)| \leq \|f\|_{L^\infty} \|A\|_{\alpha} \mass_\alpha^k(\sigma)  \leq \|f\|_{L^\infty} \|A\|_{\alpha} \diam(\sigma)^{k-1+\alpha}\;,
 \]
  and 
\begin{align*}
|\delta_{\mcK;\sigma} \Xi^{f\cdot A}(\sigma)| &= \Big|\sum_{\sigma'\in \mathcal{K}} \left( \mu_{\sigma}(f) -\mu_{\sigma'}(f)   \right) A(\sigma') \Big|\\
& \leq |\mathcal{K}| \cdot \|f\|_{C^\gamma} \diam(\sigma)^\gamma \|A\|_{\alpha} \mass_\alpha^k(\sigma) \\
&\leq|\mathcal{K}| \cdot \|f\|_{C^\gamma}  \|A\|_{\alpha} \diam(\sigma)^{\gamma+ k-1+ \alpha}\ .
\end{align*}
The first estimate of \eqref{eq:prod_bound}  then follows from Corollary~\ref{cor:norm_estimate_sewing}. 

The fact that $f \cdot A$ does not depend on the particular choice of $\mu$ follows from the uniqueness in Proposition~\ref{prop:sewing} and the observation that, given a second family $\mu'=\{\mu'_\sigma\}_{\sigma\in \mathfrak{X}^k}$ of measures, 
$$|\Xi^{f\cdot A}_\mu(\sigma)- \Xi^{f\cdot A}_{\mu'}(\sigma)|\leq \|f\|_{C^\gamma} \diam(\sigma)^\gamma \|A\|_{\alpha} \mass_\alpha^k(\sigma)\;.$$ 

We now turn to the second inequality of \eqref{eq:prod_bound}. 
For a family of measures $\mu = \{ \mu_{w} \}_{w \in \mathfrak{X}^{k}}$ as before we define 
 $\Xi_{\mu}^{\partial (f\cdot A)}(\sigma) :=\sum_{F\in \Bd(\sigma)} \Xi_{\mu}^{f\cdot A}( F)$ for $\sigma \in \mathfrak{X}^{k+1}$. 

Given another family of measures on $k+1$ simplices
 $\bar{\mu}= \{\bar{\mu}_\sigma\}_{\sigma\in \mathfrak{X}^{k+1} }$, 
 we set, for $\sigma \in  \mathfrak{X}^{k+1}$. 
 $$\Xi^{ (\partial f)\wedge A}_{\mu, \bar{\mu}}(\sigma):= \sum_{F\in \Bd(\sigma)} \left(\mu_F(f)-\bar{\mu}_{\sigma}(f)\right) A(F)\ .$$
 Since
$\Xi^{\partial (f\cdot A)}_{\mu}= \Xi^{ (\partial f)\wedge A}_{\mu, \bar{\mu}} + \Xi^{f\cdot \partial A}_{\bar \mu}$, 
 \begin{align*}
|\Xi_\mu^{\partial (f\cdot A)}(\sigma)|&\leq \|f\|_{C^\gamma} \|A\|_{\alpha} \diam(\sigma)^{\gamma+k-1+ \alpha} + \|f\|_{L^\infty} \|\partial A\|_{\beta} \diam(\sigma)^{k+\beta} \\
&\leq \|f\|_{C^\gamma}\|A\|_{(\alpha,\beta)} \diam(\sigma)^{k + ( \gamma+ \alpha  -1 ) \wedge \beta}\;,\;
\end{align*} 
and thus
\begin{align*}
 &|\partial (f\cdot A) (\sigma )| \leq |\Xi_\mu^{\partial (f\cdot A)}(\sigma) - \partial (f \cdot A) (\sigma )| + |\Xi_\mu^{\partial (f\cdot A)}(\sigma)|\\
 &\leq 
(k+1) \llbracket \delta \Xi_{\mu}^{f\cdot A} \rrbracket_{\gamma+k-1+\alpha} \diam({\sigma})^{\gamma+k-1+\alpha}  + \|f\|_{C^\gamma}\|A\|_{(\alpha,\beta)} \diam(\sigma)^{k + ( \gamma+ \alpha  -1 ) \wedge \beta}
 \\
&\lesssim \|f\|_{C^\gamma}\|A\|_{(\alpha,\beta)} \diam(\sigma)^{k + ( \gamma+ \alpha  -1 ) \wedge \beta }   \ ,
\end{align*} 
which is the second inequality of \eqref{eq:prod_bound}.
In the first inequality above, we used that $|\Xi_\mu^{\partial (f\cdot A)}(\sigma) - \partial (f \cdot A) (\sigma)|
\le \sum_{F \in \Bd(\sigma)} |\Xi_\mu^{f\cdot A}(F) - (f\cdot A)(F)|$ and we have already estimates the terms on the right hand side. 
\end{proof}
The preceding theorem allows us to extend a wedge product on differential forms to our rough setting. 
\begin{corollary}\label{cor:wedge}
Let  $\alpha, \beta, \gamma \in (0,1]$ with $\tilde{\alpha}:=\alpha+\gamma-1>0$ and $\tilde{\beta}:=\beta+\gamma-1>0$.
Then  $df\wedge A:= d(f\cdot A) -f\cdot dA$ extends from smooth forms to a continuous bilinear map
\begin{equ}\label{eq:wedge}
 C^{\gamma}\times \Omega^{k}_{\alpha,\beta} \to \Omega^{k+1}_{\tilde{\alpha} \wedge \beta, \tilde{ \beta}}, \qquad (f, A)\mapsto   df\wedge A:= d(f\cdot A) 
-f\cdot dA \ 
\end{equ}
\end{corollary}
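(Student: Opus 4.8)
The plan is to obtain $df\wedge A$ directly as a combination of objects already constructed, so that the statement becomes bookkeeping on top of Theorem~\ref{thm:multiplication} and the boundedness of $\partial$. Throughout, $d$ denotes the bounded operator $\partial:\Omega^{k}_{(\alpha',\beta')}\to\Omega^{k+1}_{(\beta',\infty)}$ of Remark~\ref{rem:holder}. First I would assemble two building blocks. Theorem~\ref{thm:multiplication}, applied with the given $\alpha,\gamma$ (note $\tilde\alpha=\alpha+\gamma-1>0$), gives $f\cdot A\in\Omega^{k}_{(\alpha,\tilde\alpha\wedge\beta)}$ with $\|f\cdot A\|_{(\alpha,\tilde\alpha\wedge\beta)}\lesssim\|f\|_{C^{\gamma}}\|A\|_{(\alpha,\beta)}$ by \eqref{eq:prod_bound}, hence $d(f\cdot A)=\partial(f\cdot A)\in\Omega^{k+1}_{(\tilde\alpha\wedge\beta,\infty)}$ with the same bound. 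On the other side $dA=\partial A\in\Omega^{k+1}_{(\beta,\infty)}$, and since $\beta,\gamma\in(0,1]$ and $\beta+\gamma>1$ is exactly the hypothesis $\tilde\beta>0$, a second application of Theorem~\ref{thm:multiplication}, now to the $(k+1)$-form $\partial A$, yields $f\cdot dA\in\Omega^{k+1}_{(\beta,\tilde\beta)}$ with $\|f\cdot dA\|_{(\beta,\tilde\beta)}\lesssim\|f\|_{C^{\gamma}}\|\partial A\|_{(\beta,\infty)}\lesssim\|f\|_{C^{\gamma}}\|A\|_{(\alpha,\beta)}$.

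Next I would set $df\wedge A:=d(f\cdot A)-f\cdot dA\in\Omega^{k+1}$ and check it lies in the claimed space. For the first index, $d(f\cdot A)$ is controlled by $\mass^{k+1}_{\tilde\alpha\wedge\beta}$ while $f\cdot dA$ is controlled by $\mass^{k+1}_{\beta}$; since $\tilde\alpha\wedge\beta\le\beta$, the inclusion $\Omega^{k+1}_{(\beta,\tilde\beta)}\subset\Omega^{k+1}_{(\tilde\alpha\wedge\beta,\tilde\beta)}$ from Remark~\ref{rem:holder} gives $\|f\cdot dA\|_{\tilde\alpha\wedge\beta}\le\|f\cdot dA\|_{\beta}$, so that $\|df\wedge A\|_{\tilde\alpha\wedge\beta}\lesssim\|f\|_{C^{\gamma}}\|A\|_{(\alpha,\beta)}$. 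For the second index the crucial point is $\partial\circ\partial=0$: since $\partial d(f\cdot A)=\partial\partial(f\cdot A)=0$ we have $\partial(df\wedge A)=-\partial(f\cdot dA)$, and $f\cdot dA\in\Omega^{k+1}_{(\beta,\tilde\beta)}$ forces $\partial(f\cdot dA)\in\Omega^{k+2}_{(\tilde\beta,\infty)}$, i.e. $\|\partial(df\wedge A)\|_{\tilde\beta}=\|\partial(f\cdot dA)\|_{\tilde\beta}\le\|f\cdot dA\|_{(\beta,\tilde\beta)}\lesssim\|f\|_{C^{\gamma}}\|A\|_{(\alpha,\beta)}$. Adding the two estimates shows $df\wedge A\in\Omega^{k+1}_{(\tilde\alpha\wedge\beta,\tilde\beta)}$ together with the continuity bound, and bilinearity is inherited from the bilinearity of $(f,A)\mapsto f\cdot A$ in Theorem~\ref{thm:multiplication} and the linearity of $\partial$.

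It remains to verify this really extends the classical $df\wedge A$ on smooth forms, which is immediate from two facts already built into the constructions: the product $f\cdot A$ of Theorem~\ref{thm:multiplication} restricts to the pointwise product on smooth forms (its defining limit \eqref{eq:def_product}), and $\partial$ restricts to the exterior derivative via Stokes' theorem (the value of $\partial B$ on $\sigma$ equals $\int_{\partial\sigma}B=\int_{\sigma}dB$); combined with the Leibniz rule $d(fA)=df\wedge A+f\,dA$ this gives $\big(d(f\cdot A)-f\cdot dA\big)(\sigma)=\int_{\sigma}df\wedge A$ for every smooth simplex. I do not expect a genuine obstacle here; the only points needing care are the index bookkeeping — in particular noticing $\tilde\alpha\wedge\beta\le\beta$, so the first index is not worsened — and that Theorem~\ref{thm:multiplication} is being applied to $\partial A$, whose second index is $\infty$ (harmless, since $(\,\cdot\,)\wedge\infty$ simply drops that constraint). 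The single load-bearing input beyond Theorem~\ref{thm:multiplication} is $\partial\circ\partial=0$, which is exactly what makes $\partial(df\wedge A)$ inherit the $\tilde\beta$-regularity of $f\cdot dA$.
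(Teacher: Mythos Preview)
Your proof is correct and follows essentially the same approach as the paper: apply Theorem~\ref{thm:multiplication} once to get $d(f\cdot A)\in\Omega^{k+1}_{(\tilde\alpha\wedge\beta,\infty)}$ and once more to $\partial A$ to get $f\cdot dA\in\Omega^{k+1}_{(\beta,\tilde\beta)}$, then take the difference. The paper's proof is a one-liner stating exactly these two memberships; your version just spells out the index bookkeeping and makes explicit that the second index comes from $\partial\circ\partial=0$ (equivalently, that the $\infty$ in $\Omega^{k+1}_{(\tilde\alpha\wedge\beta,\infty)}$ contributes nothing to $\|\partial(df\wedge A)\|_{\tilde\beta}$).
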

\begin{proof}
This follows from Theorem~\ref{thm:multiplication} which gives
$d(f\cdot A) \in \Omega^{k+1}_{(\alpha+\gamma -1)\wedge \beta, \infty}$ and $f\cdot dA \in  \Omega^{k+1}_{ \beta, \beta+\gamma-1}$. 
\end{proof}
We state another corollary, which in the view of Remark~\ref{rem:holder} and Corollary~\ref{cor:wedge}, reproduces \cite[Thm.~3.2]{Zus11} when specialized to $n=k$. 
\begin{corollary}\label{cor:zust}
Let $n\leq k$. 
Fix $\alpha,\gamma_0,...,\gamma_n\in (0,1]$ and $ \beta\in (0,1]\cup\{\infty\}$ such that 
\[
\alpha+\sum_{i=0}^{n} \gamma_i > n  \enskip \text{ and } \enskip \beta+ \sum_{i=0}^{n} \gamma_i >n-1\;.
\] 

Then, writing $\tilde\alpha=  \alpha+\sum_{i=1}^{n} \gamma_i -(n-1)$, $\tilde{\beta}=\big(\alpha + \sum_{i=0}^{n} \gamma_i-n \big)\wedge \big(\beta+\sum_{i=1}^{n} \gamma_i -(n-1) \big)$, one has that the mapping 
$$ C^{\gamma_0} \times \Big( \prod_{i=1}^{n} C^{\gamma_i} \Big) \times\Omega^{k-n}_{\alpha,\beta} \to \Omega^{k}_{\tilde \alpha,\tilde \beta} 
,\qquad (g_0,...,g_n, A) \mapsto g_0 \cdot dg_1 \wedge ...\wedge d g_n \wedge A $$
 is a bounded multi-linear map. 
\end{corollary}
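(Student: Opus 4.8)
The plan is to prove Corollary~\ref{cor:zust} by induction on $n$, using Corollary~\ref{cor:wedge} as the inductive step together with the base case supplied by Theorem~\ref{thm:multiplication} and Remark~\ref{rem:holder}. The key observation is that $g_0 \cdot dg_1 \wedge \dots \wedge dg_n \wedge A$ can be built up one wedge factor at a time, starting from the innermost form $A \in \Omega^{k-n}_{\alpha,\beta}$, and that Corollary~\ref{cor:wedge} precisely governs how the regularity indices $(\alpha,\beta)$ transform under wedging with $df$ for $f \in C^\gamma$.

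\textbf{Base case $n=0$.} Here the claim is that $(g_0, A) \mapsto g_0 \cdot A$ maps $C^{\gamma_0} \times \Omega^{k}_{\alpha,\beta} \to \Omega^{k}_{\tilde\alpha,\tilde\beta}$ boundedly, where the hypotheses read $\alpha + \gamma_0 > 0$ (automatic) and $\beta + \gamma_0 > -1$ (automatic), and $\tilde\alpha = \alpha + \gamma_0 - (-1)$... wait, one must be slightly careful with the empty-sum conventions: with $n=0$ the product $\prod_{i=1}^{0}$ is empty so the map is just $(g_0, A) \mapsto g_0 \cdot A$, and one reads off $\tilde\alpha = \alpha$ (since $\sum_{i=1}^0 \gamma_i = 0$ and $-(n-1) = 1$, giving $\alpha + 0 + 1$ — so the conventions must be that the formula is applied only for $n \geq 1$ and $n=0$ is handled directly). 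For $n=0$ one simply invokes Theorem~\ref{thm:multiplication}: if $\alpha + \gamma_0 > 1$ this gives $g_0 \cdot A \in \Omega^k_{\alpha, (\alpha+\gamma_0-1)\wedge\beta}$ with the stated bound; if $\alpha + \gamma_0 \le 1$ the statement is essentially vacuous since $C^{\gamma_0}$-multiplication then need not preserve the spaces, and indeed the hypothesis $\alpha + \sum_{i=0}^n \gamma_i > n$ becomes $\alpha + \gamma_0 > 0$ which is weaker — so in fact the cited special case $n = k-1$ of \cite{Zus11} should be checked to have $\gamma_0$ large enough, or the corollary should be read as starting at $n \ge 1$. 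I would state the corollary's proof as: first treat $n=1$ directly from Corollary~\ref{cor:wedge}, then induct.

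\textbf{Inductive step.} Assume the result for $n-1$. Write $B := g_1 \cdot dg_2 \wedge \dots \wedge dg_n \wedge A$; wait — it is cleaner to peel from the outside. Set $A' := dg_1 \wedge \dots \wedge dg_n \wedge A \in \Omega^{k}_{?,?}$ and note $g_0 \cdot dg_1 \wedge \dots \wedge dg_n \wedge A = g_0 \cdot A'$, so the outermost multiplication is handled by Theorem~\ref{thm:multiplication} once $A'$ is controlled. For $A'$ itself, apply Corollary~\ref{cor:wedge} with $f = g_1$ to the inner form $A'' := dg_2 \wedge \dots \wedge dg_n \wedge A$, which by the induction hypothesis (applied with parameters $\gamma_0$ replaced by the trivial factor, i.e. to the $(n-1)$-fold wedge $dg_2 \wedge \dots \wedge dg_n \wedge A$) lies in some $\Omega^{k}_{a,b}$ with explicit $a,b$. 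One then checks that iterating the index recursion $(a,b) \mapsto ((a+\gamma-1)\wedge b,\ b+\gamma-1)$ from Corollary~\ref{cor:wedge}, starting at $(\alpha,\beta)$, after wedging with $dg_n, \dots, dg_1$ in turn yields exactly the claimed $\tilde\alpha$ and $\tilde\beta$, and that the positivity hypotheses $\alpha + \sum_{i=0}^n\gamma_i > n$ and $\beta + \sum_{i=0}^n \gamma_i > n-1$ are precisely what is needed to keep all intermediate indices positive (so that each application of Corollary~\ref{cor:wedge} is legitimate) and to make the final multiplication by $g_0$ admissible. Boundedness of the composite multilinear map then follows by composing the bounds in \eqref{eq:prod_bound} at each stage.

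\textbf{Main obstacle.} The genuine work is purely bookkeeping: verifying that the nested $\min$'s produced by repeated application of the recursion $(a,b)\mapsto((a+\gamma-1)\wedge b, b+\gamma-1)$ collapse to the stated closed forms for $\tilde\alpha,\tilde\beta$, and checking at each of the $n$ stages that the relevant index stays in $(0,1]$ (in particular that one never needs an index $>1$, which would require truncation not covered by the cited results, and never hits $\le 0$, which is where the hypotheses enter). A clean way to organize this is to prove by downward induction on $j$ that $dg_j \wedge \dots \wedge dg_n \wedge A \in \Omega^{k - (j-1)}_{\alpha_j, \beta_j}$ with $\alpha_j = \alpha + \sum_{i=j}^n \gamma_i - (n - j + 1) + 1$ and $\beta_j = \big(\alpha + \sum_{i=j}^n \gamma_i - (n-j+1)\big) \wedge \big(\beta + \sum_{i=j}^n \gamma_i - (n-j)\big)$, reducing the $\min$-collapse to a one-line check using that $\beta \le $ any $\alpha$-type quantity is not needed because the $\min$ is already exposed at each step. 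I expect no conceptual difficulty beyond this index arithmetic, since the analytic content is entirely contained in Theorem~\ref{thm:multiplication} and Corollary~\ref{cor:wedge}.
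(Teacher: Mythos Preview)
Your approach matches the paper's: iterate Corollary~\ref{cor:wedge} to build $dg_1\wedge\cdots\wedge dg_n\wedge A$ with explicitly tracked indices, then apply Theorem~\ref{thm:multiplication} once more for the final $g_0$-multiplication. The paper organises this as a direct iteration (defining $A_{k-n+l}$ and $(\alpha_{k-n+l},\beta_{k-n+l})$ for $l=1,\ldots,n$) rather than framing it as induction on the full statement, which sidesteps your $n=0$ detour, but the index bookkeeping you identify as the main obstacle is indeed all that remains and the positivity hypotheses are precisely what keep each intermediate application of Corollary~\ref{cor:wedge} legitimate.
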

\begin{proof}
We write $\alpha_{k-n} = \alpha, \beta_{k-n}= \beta$ and $A_{k-n}= A$. 
Then, for $l\in \{1,...,k\}$, inductively define $\alpha_{k-n+l} = (\alpha_{k-n+l-1} +\gamma_{k-l} -1)  \wedge  \beta_{k-n+l-1}$ and 
$ \beta_{k-n+l}= \beta_{k-n+l-1} +\gamma_{k-l} -1$. Note that $\alpha_{k-n+l}, \beta_{k-n+l}>0$ by assumption and thus
$A_{k-n+l}:= dg_n \wedge A_{k-n+l-1}\in \Omega^{k-n+l}_{\alpha_k-n+l  ,\beta_k-n+l} $ by \eqref{eq:wedge}.
For $l = n$, we have $A_{k}= dg_1 \wedge ...\wedge d g_n \wedge A$ and $\alpha_k=\tilde\alpha$ and  $\beta_k= \beta+\sum_{i=1}^{n} \gamma_i -(n-1)$.
Applying Theorem~\ref{thm:multiplication} once more to control $g_0 \cdot A_{k}$ completes the proof.
\end{proof}
{
\begin{remark}
Corollary~\ref{cor:zust} shows that our spaces $\Omega^k_{\alpha,\beta}$ contain the forms constructed by Z\"ust in \cite{Zus11}. 
\end{remark}
\begin{remark}
One can interpret Corollary~\ref{cor:zust} as giving a Young type wedge product between a sufficiently regular form $G=g_0 \cdot dg_1 \wedge ...\wedge d g_n$ and the rough form $A$. It would be interesting to find a sharp intrinsic characterisation of the required regularity on $G$, but this is beyond the scope of this work.
\end{remark}
}




\section{Pullback and Stokes' theorem}\label{sec:pull-back}

For $F : \mathbb{R}^{m}\to \mathbb{R}^{d}$ and a simplex $\sigma= [v_0,...,v_k] \in \mathfrak{X}^k(\mathbb{R}^m)$, write $F_*\sigma= [F(v_0),...,F(v_k)]$. 
For $A\in \Omega^k_{\alpha,\beta}$, we shall (under appropriate regularity assumptions) define
\begin{equ}\label{eq:def_pullback}
\big(F^{*}A\big)(\sigma):=\lim_{n\to \infty} \sum_{{\sigma'\in \mathcal{K}_n} } A(F_*\sigma')
\end{equ}
along any regular sequence of subdivisions $\mathcal{K}_n$ of $\sigma$. 
This is easily seen to agree with the classical definition whenever $F$ and $A$ are smooth.

\begin{theorem}\label{thm:pullback}
Let $\alpha\in ( \frac{1}{1+\eta} ,1]$, $\beta>0$. 
Then, for $A \in \Omega_{\alpha,\beta}$ and $F\in C^{1,\eta}$, 
the limit in \eqref{eq:def_pullback} is well defined and for $\tilde{\beta}=  \big(\alpha(1+\eta)-1\big)\wedge \beta$ the linear map
$$ F^{*}: \Omega^k_{\alpha,\beta} \to \Omega^k_{\alpha,\tilde \beta }, \qquad A\mapsto   F^*A \ $$ is bounded.

Furthermore, for $ \bar F \in C^{1,\eta}$, $F^*(\bar{F}^*A)=( \bar{F}\circ F)^* A$,  and, for $\phi \in C^{\gamma}$ with $\gamma\in (1-\alpha,1]$, $F^{*}(\phi \cdot A)= (F^*\phi)\cdot F^{*} A$. 

Finally, for $\tilde{\alpha}< \alpha$, $\tilde{\beta}<  \big(\alpha(1+\eta)-1\big)\wedge \beta$ the following map is continuous
$$  C^{1,\eta}\times \Omega^k_{\alpha,\beta}\to \Omega^k_{\tilde{\alpha},\tilde \beta}, \qquad (F,A)\mapsto   F^*A\ .$$  
\end{theorem}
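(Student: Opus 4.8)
\textbf{Proof plan for Theorem~\ref{thm:pullback}.}

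The strategy is to realise $F^*A$ as the output of the simplicial sewing map $\mathcal{I}$ (Proposition~\ref{prop:sewing}) applied to the natural germ $\Xi^{F^*A}(\sigma) := A(F_*\sigma)$, and then to extract all four claims from the quantitative estimates that the sewing construction provides. First I would show $\Xi^{F^*A} \in C_{2,k}^{\eta',\gamma'}$ for suitable exponents. Since $F\in C^{1,\eta}$ is Lipschitz on bounded sets, $\diam(F_*\sigma) \lesssim_F \diam(\sigma)$, so the bound $|\Xi^{F^*A}(\sigma)| = |A(F_*\sigma)| \lesssim \|A\|_\alpha \diam(F_*\sigma)^{k-1+\alpha} \lesssim \diam(\sigma)^{k-1+\alpha}$ gives the $\eta' = k-1+\alpha$ part. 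For the defect $\delta_{\mathcal{K};\sigma}\Xi^{F^*A} = A(F_*\sigma) - \sum_{\sigma'\in\mathcal{K}} A(F_*\sigma')$, the key geometric input is that $F$ maps a simplex $\sigma$ of diameter $r$ to something that is, up to its affine approximation, of order $r^{1+\eta}$ away from being affine; concretely, on each small piece $\sigma'$ the image $F_*\sigma'$ differs from the affine image $(DF(x_{\sigma})\cdot + c)_*\sigma'$ by a vertex displacement of order $r^{1+\eta}$. Using Lemma~\ref{lem:distance between simplices} together with the pairing bound Lemma~\ref{lem:pairing_bound}, one estimates $|A(F_*\sigma') - A(L_*\sigma')| \lesssim \|A\|_{(\alpha,\beta)}\big( r^{k-1}(r^{1+\eta})^\alpha + r^{k}(r^{1+\eta})^\beta\big)$ for the affine map $L$ approximating $F$ near $\sigma$, while the affine image $L_*$ is genuinely additive over $\mathcal{K}$ (a linear map sends a subdivision to a subdivision), so the defect telescopes. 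Summing over $\sigma'\in\mathcal{K}$ (there are $|\mathcal{K}|$ of them) gives $|\delta_{\mathcal{K};\sigma}\Xi^{F^*A}| \lesssim |\mathcal{K}| \diam(\sigma)^{\gamma'}$ with $\gamma' = \big(k-1+\alpha(1+\eta)\big) \wedge \big(k+\beta(1+\eta)\big)$; the hypothesis $\alpha > \tfrac{1}{1+\eta}$ is exactly what makes $\gamma' > k$, so Proposition~\ref{prop:sewing} applies.

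Once $\Xi^{F^*A}\in C_{2,k}^{\eta',\gamma'}$ is established, set $F^*A := \mathcal{I}\Xi^{F^*A}$. The characterisation of $\mathcal{I}\Xi(\sigma)$ as the limit $\lim_n \sum_{\sigma'\in\mathcal{K}_n}\Xi^{F^*A}(\sigma')$ along regular subdivisions is precisely \eqref{eq:def_pullback}, giving well-definedness and independence of the chosen regular sequence. Corollary~\ref{cor:norm_estimate_sewing} then yields $|F^*A(\sigma)| \lesssim \|A\|_{(\alpha,\beta)} \mass_\alpha^k(\sigma)$, i.e. the $\|\cdot\|_\alpha$-part of the target norm. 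For the $\partial$-part, I would run the same scheme one dimension up: define the germ $\Xi^{\partial F^*A}(\sigma) := \sum_{F'\in\Bd(\sigma)}\Xi^{F^*A}(F')$ on $\mathfrak{X}^{k+1}$, which equals $(\partial A)(F_*\sigma)$-type quantities, and observe that the decomposition $\Xi^{\partial F^*A} = \Xi^{(\text{affine defect})} + \Xi^{F^*\partial A}$ (mirroring the $\Xi^{(\partial f)\wedge A} + \Xi^{f\cdot\partial A}$ split in the proof of Theorem~\ref{thm:multiplication}) lets one bound it by $\diam(\sigma)^{k + \tilde\beta}$ with $\tilde\beta = (\alpha(1+\eta)-1)\wedge\beta$, whence $\|\partial F^*A\|_{\tilde\beta} \lesssim \|A\|_{(\alpha,\beta)}$. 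Uniqueness in Proposition~\ref{prop:sewing} also delivers the algebraic identities: both $F^*(\bar F^*A)$ and $(\bar F\circ F)^*A$ are built from germs agreeing up to $o(\diam^k)$ (the chain-rule-type error $\bar F\circ F$ versus the composite affine approximations is of the right order because $\bar F \in C^{1,\eta}$), and similarly $F^*(\phi\cdot A)$ versus $(F^*\phi)\cdot F^*A$ reduces to comparing $\mu_\sigma$-averages of $\phi\circ F$ against $(\phi\circ F)$-averages, using $\gamma > 1-\alpha$ to stay in the multiplicative regime of Theorem~\ref{thm:multiplication}.

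Finally, for continuity of $(F,A)\mapsto F^*A$ into the space with slightly worse exponents $\tilde\alpha < \alpha$, $\tilde\beta < (\alpha(1+\eta)-1)\wedge\beta$, I would use the standard interpolation-by-room trick: the map is bounded $\Omega_{\alpha,\beta}\to\Omega_{\alpha,(\alpha(1+\eta)-1)\wedge\beta}$ and linear in $A$, so for fixed $F$ continuity in $A$ is immediate, and for the dependence on $F$ one compares $F^*A$ and $G^*A$ by feeding the germ difference $\Xi^{F^*A} - \Xi^{G^*A} = A(F_*\sigma) - A(G_*\sigma)$ into the sewing estimates: Lemma~\ref{lem:distance between simplices} bounds this by $\|A\|_{(\alpha,\beta)}(\diam(\sigma)^{k-1}\|F-G\|_{C^0}^\alpha + \ldots)$ locally, which has too little decay to land in $C_{2,k}^{\eta',\gamma'}$ at the top exponents but — after interpolating this crude bound against the uniform bound on each of $\|F^*A\|$, $\|G^*A\|$ — does converge to $0$ in the $\|\cdot\|_{\tilde\alpha,\tilde\beta}$-norm as $\|F-G\|_{C^{1,\eta}}\to 0$. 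The main obstacle I anticipate is the bookkeeping in the defect estimate for $\delta_{\mathcal{K};\sigma}\Xi^{F^*A}$: one must carefully choose the affine comparison map $L$ (expansion of $F$ around a point of $\sigma$), control the vertex displacement $|F(v) - L(v)|$ uniformly by $C_F\, r^{1+\eta}$ using the $C^{1,\eta}$-seminorm, and verify that $L_*$ really is exactly additive over the subdivision $\mathcal{K}$ so that only the $F$-versus-$L$ error contributes — this is where the precise hypothesis $\alpha(1+\eta) > 1$ enters and where an off-by-one in the exponent would break the application of the sewing lemma.
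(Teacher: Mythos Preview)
Your proposal is correct and follows essentially the same route as the paper. The paper's proof reads ``Combine Lemma~\ref{lem:mass bound_1}, Lemma~\ref{lem:mass bound_2} and Proposition~\ref{prop:pullback2}'', and those three results are precisely the steps you describe: the Lipschitz bound on $\Xi^{F^*A}$, the $C^{1,\eta}$ bound on the defect $\delta_{\mathcal{K};\sigma}\Xi^{F^*A}$ with exponent $\gamma'=(k-1+\alpha(1+\eta))\wedge(k+\beta(1+\eta))>k$, then sewing via Proposition~\ref{prop:sewing} plus the $\partial$-estimate and the algebraic identities by germ comparison; the interpolation-by-room argument for joint continuity is also what the paper does (packaged inside Lemmas~\ref{lem:mass bound_1} and~\ref{lem:mass bound_2}).

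The one genuine, if minor, difference is your choice of affine comparison map. You take $L=DF(x_\sigma)\cdot+c$, the first-order Taylor expansion at a point of $\sigma$; the paper instead uses the secant interpolant $F^\sigma$, the unique affine map agreeing with $F$ at the vertices of $\sigma$. Both satisfy the key properties (affine, hence additive over subdivisions; within $O(r^{1+\eta})$ of $F$ on $\simplex$), so both close the argument. The paper's choice is marginally cleaner because $F^\sigma_*\sigma=F_*\sigma$ on the nose, so the telescoping sum has no residual $|A(F_*\sigma)-A(L_*\sigma)|$ term to absorb; with your $L$ you pick up one extra such term, which is harmless since it obeys the same bound. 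Conversely, your Taylor-based comparison makes the role of the $C^{1,\eta}$ seminorm slightly more transparent. The paper also introduces intermediate norms $\|F\|_{(\alpha,\beta);\eta;k}$ and $\vertiii{F}_{(\alpha,\beta);\gamma;k}$ on maps $F$ (measuring exactly the two estimates you need) and states Proposition~\ref{prop:pullback2} in terms of those, which modularises the argument but is not a different idea.
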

\begin{proof}
Combine Lemma~\ref{lem:mass bound_1}, Lemma~\ref{lem:mass bound_2} and Proposition~\ref{prop:pullback2}.
\end{proof}
Note that this implies that for $\eta >0$, $\alpha>\frac{1}{1+\eta} , \beta <  \alpha(1+\eta)-1$, and an oriented $C^{1,\eta}$-manifold $M$  (possibly with boundary) we can define a space 
$\Omega_{\alpha,\beta}^k (M)$ in the usual way.
 In particular, this provides a notion of integration by pull-back and mimicking standard proofs of Stokes' theorem, cf.\ \cite{Lee10}, one obtains the following theorem. 
\begin{theorem}\label{thm:stokes}
Let $M$ be a
compact $(k+1)$--dimensional, oriented, $C^{1,\eta}$--manifold with boundary. 
For any $\alpha> \frac{1}{1+\eta}$, $\beta>0$, and $A\in \Omega^{k}_{\alpha,\beta}(M)$, it holds that 
$$ \int_M dA = \int_{\partial M} A \;. $$

%
\end{theorem}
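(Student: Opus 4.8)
The plan is to reduce Stokes' theorem for rough forms to the classical smooth case by a careful approximation argument, using the pull-back machinery of Theorem~\ref{thm:pullback} and the fact (Remark~\ref{rem: Manifolds in B}) that compact oriented $C^{1,\eta}$-manifolds with boundary define elements of $\mathcal{B}^k_{(\alpha,\beta)}$ once $\alpha>\frac{1}{1+\eta}$. First I would fix, via a finite atlas of $C^{1,\eta}$-charts and a subordinate partition of unity, a way of writing $M$ (and $\partial M$) as a limit in $\mathcal{B}^{k+1}_{(\alpha',\beta')}$ (resp.\ $\mathcal{B}^{k}$) of piecewise-affine chains: triangulate each chart domain into small simplices, push forward under the chart maps, and invoke Remark~\ref{rem: Manifolds in B} to control the error. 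Since $\partial$ is bounded on the flat norms (it is the adjoint of the bounded map $\partial$ on $\Omega$, and directly on chains $|\partial Z|_{(\beta,\infty)} \le |Z|_{(\alpha,\beta)}$ up to the shift of indices), the boundary of the approximating chain for $M$ approximates the chain for $\partial M$, modulo the interior faces which cancel. The pairing $\mathcal{B}^{k}_{(\alpha,\beta)}\times\Omega^k_{(\alpha,\beta)}\to\mathbb{R}$ of Corollary~\ref{cor:pairing} then lets me pass $A$ and $dA$ through these limits.

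The key steps, in order: (i) interpret $\int_M dA$ and $\int_{\partial M} A$ intrinsically as the pairings $\langle [M], dA\rangle$ and $\langle [\partial M], A\rangle$, where $[M]\in \mathcal{B}^{k+1}$, $[\partial M]\in\mathcal{B}^k$ are the fundamental chains, and $dA := \partial(F^*\!\text{-glued } A)$ is the intrinsic exterior derivative built in Section~\ref{sec:pull-back}; (ii) observe that on chains $\langle X, dA\rangle = \langle \partial X, A\rangle$ by definition of the boundary operator on $\Omega$ (it is literally the dual of $\partial$), hence this identity extends by continuity to all of $\mathcal{B}^{k+1}_{(\alpha,\beta)}$ provided $dA\in \Omega^{k+1}_{(\alpha',\beta')}$ pairs with $\mathcal{B}^{k+1}$ and $A\in\Omega^k_{(\alpha,\beta)}$ pairs with $\mathcal{B}^k$ — both guaranteed since $dA = d(f\cdot A)-f\cdot dA$ type terms land in the right spaces by Theorem~\ref{thm:multiplication}/Corollary~\ref{cor:wedge} and Theorem~\ref{thm:pullback}; (iii) check that $\partial[M]=[\partial M]$ as elements of $\mathcal{B}^k$ — this is the geometric content, and it follows from the approximating triangulations: for a fine piecewise-affine approximation $X_n$ of $[M]$, $\partial X_n$ consists of the boundary faces on $\partial M$ (approximating $[\partial M]$) plus interior faces that cancel in pairs because adjacent simplices induce opposite orientations on shared faces. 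Combining (ii) and (iii): $\langle [M], dA\rangle = \langle \partial[M], A\rangle = \langle [\partial M], A\rangle$.

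The main obstacle is step (iii), the convergence $\partial X_n \to [\partial M]$ in the flat norm $|\cdot|_{(\alpha,\beta)}$ on $\mathcal{X}^k$: one must show that the piecewise-affine approximation of $M$ can be arranged so that its boundary is simultaneously a good approximation of $\partial M$. Near $\partial M$ the charts are $C^{1,\eta}$ half-space charts, and one needs the triangulations of neighbouring chart domains to be compatible along overlaps, or alternatively to estimate the flat-norm distance between $\partial X_n$ and a canonical approximating chain of $\partial M$ directly. The estimate is of the same flavour as Lemma~\ref{lem:distance between simplices} and Lemma~\ref{lem:mass bound_1}: moving each vertex of a boundary simplex by $O(2^{-n})$ costs $O(\diam^{k-1}\cdot 2^{-n\alpha} + \diam^k \cdot 2^{-n\beta})$ in flat norm, and summing over the $O(2^{n(k-1)})$ boundary simplices gives a bound $O(2^{-n\alpha}+2^{-n\beta})\to 0$ precisely when $\alpha,\beta>0$; the $C^{1,\eta}$ regularity with $\alpha>\frac{1}{1+\eta}$ is what makes even $[M]$ itself well-defined in $\mathcal{B}^{k+1}$, exactly as in Remark~\ref{rem: Manifolds in B}. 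Once this convergence is in hand, everything else is continuity of the pairing plus the purely algebraic adjointness $\langle X, dA\rangle = \langle \partial X, A\rangle$, and the theorem follows. A clean alternative, which I would mention, is to localize via a partition of unity $\{\rho_i\}$ subordinate to the atlas, write $A = \sum_i \rho_i A$, pull each $\rho_i A$ back to a chart where $M$ is a piece of a half-space, apply the smooth-approximation Stokes' theorem there (using $F^*(\phi A)=(F^*\phi)(F^*A)$ and density of smooth forms in the relevant topology implicitly through the sewing characterization), and sum; the interior correction terms $\sum_i d\rho_i \wedge A = d(\sum_i\rho_i)\wedge A = 0$ vanish by Corollary~\ref{cor:wedge}.
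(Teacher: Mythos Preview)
The paper offers no detailed proof here: having established the pullback Theorem~\ref{thm:pullback} and thereby the intrinsic spaces $\Omega^k_{(\alpha,\beta)}(M)$, it simply says one mimics the standard proof of Stokes' theorem \`a la Lee. That standard argument is precisely your partition-of-unity ``alternative'' at the end: write $A=\sum_i\rho_iA$ subordinate to a finite $C^{1,\eta}$ atlas, pull each $\rho_iA$ back to a half-space chart, and there the identity $dB(\sigma)=B(\partial\sigma)$ is the very definition of $d$ on $\Omega^k$; the cross-terms $\sum_i d\rho_i\wedge A$ vanish via Corollary~\ref{cor:wedge} since $\sum_i\rho_i\equiv1$. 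No smooth approximation of $A$ is needed --- once in the chart, Stokes for cochains on simplices is tautological.

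Your main proposal through the $\mathcal{B}$-pairing and $\partial[M]=[\partial M]$ is a genuinely different route, closer in spirit to Whitney--Harrison geometric integration than to Lee's argument. It is sound but carries overhead you underplay. First, the theorem is stated for $A\in\Omega^k_{(\alpha,\beta)}(M)$ on an \emph{abstract} $C^{1,\eta}$-manifold, where that space and the integral are defined via pullback to charts; to write $\langle[M],dA\rangle$ you must embed $M$ in an ambient $\mathbb{R}^d$, interpret $A$ as a cochain there, and verify that the resulting $\mathcal{B}$-integral agrees with the pullback integral --- exactly the compatibility asserted without proof in Remark~\ref{rem:twoways}. Second, your quantitative sketch for step~(iii) has the right shape but the wrong numerology: for a $C^{1,\eta}$ manifold at mesh $2^{-n}$ the displacement between the affine and curved pieces is $O(2^{-n(1+\eta)})$, not $O(2^{-n})$, and the number of boundary $k$-simplices is $O(2^{nk})$; redoing the sum with these values yields a total error $O\big(2^{n(1-\alpha(1+\eta))}\big)+O\big(2^{-n\beta(1+\eta)}\big)$, which goes to zero precisely under the hypothesis $\alpha>1/(1+\eta)$. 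So both routes work; the paper's is shorter because it never leaves the intrinsic picture, while yours exposes the homological content $\partial[M]=[\partial M]$ at the cost of first establishing Remark~\ref{rem:twoways}.
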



\begin{remark}\label{rem:twoways}
Contrast the stronger regularity assumptions required to build integration by pull-back, Theorem~\ref{thm:pullback},  compared to Remark~\ref{rem: Manifolds in B}.
It is not hard to see that when both notions apply, the resulting integrals are equal. 
\end{remark}

\begin{remark}\label{rem:control}
In contrast to \cite[Theorem 3.18]{CCHS22} we do not work with a control, since it is unclear how to extend Definition~3.16 therein to $k>1$.
For $k=1$ (and $d\geq 2$) one can still define a control by replacing $|P;\bar{P}|^{\alpha/2}$ therein by the $(\alpha', \beta)$-flat norm for some $\alpha', \beta\in (0,1]$, but we shall not pursue this since our focus is on generic $k\geq 1$.
\end{remark}

\subsection{Simplicial approximation}

For $F: \mathbb{R}^m\to \mathbb{R}^{d}$ and  $\osimplex=[v_0,...,v_k]\in \mathfrak{X}^k(\mathbb{R}^m)$, let $F^{\sigma} : \mathbb{R}^m \to \mathbb{R}^d$
to be the unique map supported on $\pmb{\sigma}$ such that
 $F^{\sigma}(\sum_{i} t_i v_i)= \sum_{i} t_i F(v_i)$ whenever $t_i\in [0,1]$ and $\sum_{i} t_i=1$.
For a simplex $\osimplex\in \mathfrak{X}^k(\mathbb{R}^m)$ let
\begin{equs}
|F|_{(\alpha,\beta);\osimplex} &:=\sup_{\mathcal{K} | \sigma} \max_{\sigma'\in \mathcal{K}} |F_*\sigma'- F^{\sigma}_*\sigma'|_{(\alpha,\beta)} \ , \\
|F; \bar F|_{(\alpha,\beta);\osimplex} &:= \sup_{\mathcal{K} | \sigma} \max_{\sigma'\in \mathcal{K}} |F_*\sigma' -\bar F_* \sigma' - F^{\sigma}_*\sigma'+ \bar{F}^{\sigma}_*\sigma'|_{(\alpha,\beta)}\;.
\end{equs}
where the supremum runs over all subdivisions $\mathcal{K}$ of ${\osimplex}$. Let 
\begin{equs}
\|F;\bar F \|_{(\alpha,\beta);\eta; k; \mfK} :&= \sup_{\sigma \in   \mathfrak{X}^k_{\leq 1}( \mfK)} \left( \frac{|F_*\sigma- \bar{F}_*\sigma  |_{(\alpha, \beta)}}{\diam(\sigma)^{\eta}}   \right)\ ,  \\
\vertiii{F; \bar F }_{(\alpha,\beta);\gamma; k; \mfK}:&=  \sup_{\sigma\in   \mathfrak{X}^k_{\leq 1}( \mfK)} \left( \frac{ |F; \bar F |_{(\alpha,\beta);\osimplex}}{\diam(\sigma)^{\gamma}}   \right)\;, 
\end{equs}
as well as $\|F\|_{(\alpha,\beta),\gamma; k ; \mfK}:= \|F;0 \|_{(\alpha,\beta),\gamma; k; \mfK}$ and $  \vertiii{F}_{(\alpha,\beta);k; \mfK} :=\vertiii{F; 0 }_{(\alpha,\beta);k; \mfK} $.
Note that it follows directly from 
\eqref{weak triangle inequality}
that $\|\ \cdot\ ;\ \cdot\ \|_{(\alpha,\beta);\eta; k; \mfK}$ and $\vertiii{\ \cdot\ ; \ \cdot\ }_{(\alpha,\beta);\gamma; k; \mfK}$ satisfy the triangle inequality. 
\begin{lemma}\label{lem:mass bound_1}
Whenever $F$ is Lipschitz continuous, for every $R>0$ it holds that
\begin{equ}\label{eq:former_estimate_revision}
\|F\|_{(\alpha,\beta); k-1+\alpha; k; \mfK} \lesssim_{R} \| DF\|_{L^{\infty}}\ ,
\end{equ}
uniformly over $F$ satisfying $\| DF\|_{L^{\infty}}\leq R$.
Furthermore, for $\eta<k-1+\alpha$ there exists $\kappa= \kappa(\alpha,\eta)>0$ such that for every $R>0$
\begin{equ}\label{eq_local}
\|F; \bar F\|_{(\alpha,\beta); \eta; k; \mfK} \lesssim_R \|F-\bar{F}\|^{\kappa}_{L^\infty} 
\end{equ}
uniformly over Lipschitz continuous $F,\bar F$ such that $\| F\|_{L^{\infty}},\ \| \bar F\|_{L^{\infty}}, \ \| D F\|_{L^{\infty}} ,\  \| D\bar F\|_{L^{\infty}}\leq R$.
 \end{lemma}
\begin{proof}
The former estimate \eqref{eq:former_estimate_revision} is seen directly. Next, note that on the one hand 
$$|F_*\sigma- \bar{F}_*\sigma  |_{(\alpha, \beta)} \leq |F_*\sigma |_{(\alpha, \beta)}+| \bar{F}_*\sigma |_{(\alpha, \beta)}  \lesssim  {\diam(\sigma)^{k-1+\alpha}} $$
and on the other hand $|F_*\sigma- \bar{F}_*\sigma  |_{(\alpha, \beta)}\lesssim \|F-\bar{F}\|_{L^\infty}^{\alpha}$ by Lemma~\ref{lem:distance between simplices}. 
Therefore, for any $\lambda\in (0,1)$
$$|F_*\sigma- \bar{F}_*\sigma  |_{(\alpha, \beta)} \lesssim {\diam(\sigma)^{(k-1+\alpha)\lambda}} \|F-\bar{F}\|_{L^\infty}^{(1-\lambda)\alpha}\ ,$$
which implies \eqref{eq_local} by choosing $\lambda<1$ sufficiently large.
\end{proof}

\begin{lemma}\label{lem:mass bound_2}
Let $\alpha\in ( \frac{1}{1+\eta}, 1]$, $\beta\in( 0,1]$, then $ \bar{\gamma}:= \left(k-1+ \alpha(1+\eta) \right) \wedge\left( k+ \beta(1+\eta)\right)>k$.
For every $R>0$ and $\gamma\leq \bar{\gamma}$ the bound
$\vertiii{F }_{(\alpha,\beta),\gamma;k }\lesssim_{R} \|F\|_{C^{1,\eta}}$ holds
uniformly over $F$ satisfying $\|F\|_{C^{1,\eta}}<R$. 

Furthermore, if $\gamma< \bar{\gamma}$, 
there exists $\kappa=\kappa(\alpha,\beta,\gamma,\eta)>0$ such that
$\vertiii{F; \bar F }_{(\alpha,\beta),\gamma}\lesssim_{R} \|F-\bar F\|^{\kappa}_{L^\infty}$ uniformly over $F,\bar F$ satisfying $\|F\|_{C^{1,\eta}}, \|\bar F\|_{C^{1,\eta}}<R$.
 \end{lemma}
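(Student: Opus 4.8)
The plan is to estimate $\vertiii{F}_{(\alpha,\beta),\gamma;k}$, i.e.\ to bound $|F;0|_{(\alpha,\beta);\sigma} = \sup_{\mathcal{K}|\sigma} \max_{\sigma'\in\mathcal{K}} |F_*\sigma' - F^\sigma_*\sigma'|_{(\alpha,\beta)}$ by $\diam(\sigma)^\gamma$ up to a constant. Fix $\sigma = [v_0,\dots,v_k]$ with $\diam(\sigma)\le 1$, a subdivision $\mathcal{K}|\sigma$, and a sub-simplex $\sigma' = [w_0,\dots,w_k]\in\mathcal{K}$. The key point is that $F^\sigma$ is the affine interpolant of $F$ on $\sigma$, so for each vertex $w_j$ of $\sigma'$ one has, writing $w_j = \sum_i t_i^{(j)} v_i$, that $F(w_j) - F^\sigma(w_j) = F(w_j) - \sum_i t_i^{(j)} F(v_i)$. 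Since $F\in C^{1,\eta}$, a first-order Taylor expansion of $F$ around (say) $v_0$ shows that this difference is $O(\diam(\sigma)^{1+\eta})$: indeed $F(w_j) - \sum_i t_i^{(j)} F(v_i) = \sum_i t_i^{(j)}\big(F(w_j) - F(v_i)\big)$, each bracket expands as $DF(v_0)(w_j - v_i) + O(\|DF\|_{C^\eta}\diam(\sigma)^{1+\eta})$, and the linear terms cancel because $\sum_i t_i^{(j)}(w_j - v_i) = w_j - \sum_i t_i^{(j)} v_i = 0$. Hence $\mu_j := |F(w_j) - F^\sigma(w_j)| \lesssim_R \diam(\sigma)^{1+\eta}$.

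**Applying the flat-norm distance estimate.** Now $F_*\sigma' = [F(w_0),\dots,F(w_k)]$ and $F^\sigma_*\sigma' = [F^\sigma(w_0),\dots,F^\sigma(w_k)]$. Both simplices are contained in a ball of radius $r \lesssim_R \diam(\sigma')\le \diam(\sigma)$ (using $\|DF\|_{L^\infty}\le R$ for $F_*\sigma'$, and that $F^\sigma$ is the affine interpolant so it does not increase diameters), and the vertices differ by at most $\mu := \max_j \mu_j \lesssim_R \diam(\sigma)^{1+\eta}$. Applying Lemma~\ref{lem:distance between simplices} gives
\begin{equ}
|F_*\sigma' - F^\sigma_*\sigma'|_{(\alpha,\beta)} \lesssim r^{k-1}\mu^\alpha + r^k \mu^\beta \lesssim_R \diam(\sigma)^{k-1 + \alpha(1+\eta)} + \diam(\sigma)^{k + \beta(1+\eta)} \lesssim_R \diam(\sigma)^{\bar\gamma}\,,
\end{equ}
where we used $\diam(\sigma)\le 1$ and the definition $\bar\gamma = (k-1+\alpha(1+\eta))\wedge(k+\beta(1+\eta))$. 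Since the right-hand side is independent of $\mathcal{K}$ and of $\sigma'\in\mathcal{K}$, taking the supremum yields $|F|_{(\alpha,\beta);\sigma}\lesssim_R \diam(\sigma)^{\bar\gamma}\le\diam(\sigma)^\gamma$ for any $\gamma\le\bar\gamma$, which is the first bound. The strict inequality $\bar\gamma > k$ is immediate from $\alpha > \frac{1}{1+\eta}$ (which gives $\alpha(1+\eta) > 1$) and $\beta > 0$.

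**The difference bound.** For the estimate on $\vertiii{F;\bar F}_{(\alpha,\beta),\gamma}$ we argue by interpolation, exactly as in the proof of Lemma~\ref{lem:mass bound_1}. On one hand, by the triangle inequality for $\vertiii{\ \cdot\ }$ together with the bound just proved (applied at $\gamma = \bar\gamma$) we get $|F;\bar F|_{(\alpha,\beta);\sigma}\le |F|_{(\alpha,\beta);\sigma} + |\bar F|_{(\alpha,\beta);\sigma} \lesssim_R \diam(\sigma)^{\bar\gamma}$. On the other hand, expanding the four-term difference $F_*\sigma' - \bar F_*\sigma' - F^\sigma_*\sigma' + \bar F^\sigma_*\sigma'$ and using Lemma~\ref{lem:distance between simplices} (grouping so that each pairwise vertex difference is controlled by $\|F - \bar F\|_{L^\infty}$, noting that the affine interpolation operator $F\mapsto F^\sigma$ is linear and $L^\infty$-contractive on $\pmb\sigma$) gives $|F;\bar F|_{(\alpha,\beta);\sigma}\lesssim_R \diam(\sigma)^{k-1}\|F-\bar F\|_{L^\infty}^\alpha + \diam(\sigma)^k\|F-\bar F\|_{L^\infty}^\beta \lesssim_R \|F-\bar F\|_{L^\infty}^{\alpha\wedge\beta}$. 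Interpolating between these two bounds with a weight chosen so that the $\diam(\sigma)$ exponent is exactly $\gamma < \bar\gamma$ produces a positive power $\kappa = \kappa(\alpha,\beta,\gamma,\eta)$ of $\|F-\bar F\|_{L^\infty}$, as claimed. The main obstacle is the bookkeeping in the first step — verifying cleanly that the affine-interpolation error is genuinely of order $\diam(\sigma)^{1+\eta}$ uniformly over all sub-simplices of all subdivisions — but this is a direct consequence of the cancellation of linear terms together with $C^\eta$-regularity of $DF$; everything else is an application of Lemma~\ref{lem:distance between simplices} and interpolation.
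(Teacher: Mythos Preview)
Your proof follows essentially the same approach as the paper's: Taylor-expand to show the affine interpolation error at each vertex is $O(\diam(\sigma)^{1+\eta})$, then apply Lemma~\ref{lem:distance between simplices}, and for the second part interpolate between the $\bar\gamma$-bound and an $L^\infty$-difference bound.

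One point needs correction: the claim that ``$F^\sigma$ is the affine interpolant so it does not increase diameters'' is false in general --- on a highly eccentric simplex the affine interpolant of an $R$-Lipschitz map can have arbitrarily large Lipschitz constant (take e.g.\ $F(x,y)=(x^2,0)$ on $[(0,0),(1,0),(1/2,\epsilon)]$). Fortunately you only need the weaker fact that both $F_*\sigma'$ and $F^\sigma_*\sigma'$ lie in a ball of radius $\lesssim_R \diam(\sigma)$, and this follows either from $F^\sigma_*\sigma' \subset F^\sigma(\pmb\sigma) = \Conv\{F(v_i)\}$, whose diameter is $\le R\,\diam(\sigma)$, or directly from your vertex-error bound $\mu_j \lesssim_R \diam(\sigma)^{1+\eta}$ combined with the Lipschitz bound on $F$. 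With $r \lesssim_R \diam(\sigma)$ in place of $r \lesssim_R \diam(\sigma')$ the rest of your computation goes through unchanged, and this is exactly what the paper does.
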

%
%
\begin{proof}
Let $\sigma' \subset \sigma$ and observe that each vertex $v'_i\in \VV(\sigma')$ can uniquely be written as a convex combination
$ v'_i= \sum_{j=0}^k \lambda^i_j v_j$ where $v_j\in\VV(\sigma)$. 

Thus we observe that for $r= \diam (\sigma)$, 
\begin{equs}
\sum_j \lambda^i_j F(v_j) =& \sum_j \lambda^i_j \left( F (v'_i) + \nabla  F (v'_i) (v_j -v'_i)  +\mathcal{O}(\|D^1F\|_{C^\eta}(v_j -v'_i)^{1+\eta}) \right) \\
&= F (v'_i) + \nabla  F (v'_i)   \sum_j \lambda^i_j (v_j -v'_i)  +\mathcal{O}(\|D^1F\|_{C^\eta}r^{1+\eta})\\
&= F (v'_i)+ \mathcal{O}(\|D^1F\|_{C^\eta}r^{1+\eta}) \ . \label{eq:bigO}
\end{equs}
Since both $F^\sigma (\sigma')$ and  $F (\sigma')$ are contained in a ball with radius $\lesssim \|DF\|_{L^\infty}r$,
we conclude  by Lemma~\ref{lem:distance between simplices} that
$$|F^{\sigma}(\sigma')- F(\sigma')|_{(\alpha,\beta)}  
\lesssim (\|DF\|_{L^\infty}r )^{k-1} (\|DF\|_{C^\eta}r)^{\alpha (1+\eta)}+(\|DF\|_{L^\infty}r)^{k}  (\|DF\|_{C^\eta}r)^{\beta(1+\eta)}\ , $$
which remains bounded by $r^{\gamma}$ 
whenever $\gamma\leq\bar{\gamma}$.

To see the second claim of the lemma, note that as above
\begin{equs}
|F^{\sigma}(\sigma')- F(\sigma')-\bar{F}^{\sigma}(\sigma')+ \bar{F}(\sigma')|_{(\alpha,\beta)}
& \leq  |F^{\sigma}(\sigma')-\bar{F}^{\sigma}(\sigma')  |_{(\alpha,\beta)} + | F(\sigma') -\bar{F}(\sigma')|_{(\alpha,\beta)} \\
&\lesssim_R \|F-\bar{F}\|_{L^\infty}\ . \label{eq:revision1}
\end{equs} 
On the other hand by the first part of this proof 
\begin{equ}\label{eq:revision2}
\vertiii{F; \bar F }_{(\alpha,\beta); \bar \gamma;k}\leq \vertiii{F }_{(\alpha,\beta);\bar \gamma;k} + \vertiii{ \bar F }_{(\alpha,\beta);\bar \gamma;k}< \infty  \ .
\end{equ} 
We
%
 conclude by interpolation between \eqref{eq:revision1} and \eqref{eq:revision2} as at the end of the proof of Lemma~\ref{lem:mass bound_1}.
\end{proof} 


\begin{remark}
Note that the threshold $\alpha>1/2$ in Theorem~\ref{thm:pullback} is explained by the observation that even if $F$ is smooth, generically
$\vertiii{F}_{(\alpha,\beta); k;k}<\infty$ only if $\alpha \in [ \frac{1}{2}, 1]$. This can be seen by for example choosing
$F$ to be a smooth parametrisation of the closure of an open subset of an embedded $k$-sphere which makes  \eqref{eq:bigO} sharp. 
\end{remark}

In order to make sense of the limit in \eqref{eq:def_pullback}, we define the increments
$$\Xi^{F^*A}(\sigma):= A(F_*\sigma), \qquad \Xi^{F^*A ; \bar F^*A }(\sigma):= A(F_*\sigma)- A(\bar F_* \sigma)\;.$$

\begin{prop}\label{prop:pullback2}
Let $\eta>k-1$, $\gamma>k$. For $A \in \Omega^k_{\alpha,\beta}$ and $F,\bar{F}$ continuous such that 
$\| F\|_{( \alpha, \beta);\eta;k}, \| \bar F\|_{( \alpha, \beta);\eta;k}, \vertiii{F}_{(\alpha,\beta); \gamma;k} , \vertiii{\bar F}_{(\alpha,\beta); \gamma;k}  <\infty$, one finds that
$\Xi^{F^*A}, \Xi^{F^*A ; \bar F^*A }\in C_{2,k}^{\eta, \gamma}$
and that $F^*A:= \mathcal{I}\Xi^{{F^*A}} \in \Omega^k$ satisfies
\begin{equs}
\|F^*A\|_{\eta-(k-1) }&\lesssim \|A\|_{(\alpha,\beta)} \left( \| F\|_{( \alpha, \beta);\eta;k}  \vee \vertiii{F}_{(\alpha,\beta);\gamma;k} \right)\  ,\\
\|F^*A- \bar{F}^*A \|_{\eta-(k-1) } &\lesssim \|A\|_{(\alpha,\beta)}  \left( \| F;\bar F\|_{( \alpha, \beta);\eta;k}  \vee \vertiii{F;\bar F}_{(\alpha,\beta);\gamma;k} \right)\ .
\end{equs}
If furthermore $\vertiii{F}_{(\beta,\infty);\tilde{\gamma}; k+1}, \vertiii{F}_{(\beta,\infty);\tilde{\gamma}; k+1}<\infty$ for $\tilde{\gamma}>k$, then 
$F^*A\in \Omega_{\alpha, (\gamma\wedge \tilde \gamma)-k}\ $
and 
\begin{equ}\label{eq:derivative_pullback}
\|\partial (F^*A  -\bar F^*A)\|_{(\gamma \wedge \tilde{\gamma})- k }\lesssim  \vertiii{ F;\bar F}_{( \alpha, \beta);\gamma;k}  \vee\|F;\bar{F}\|_{(\beta,\infty);\tilde\gamma; k+1}\ .
\end{equ}
Finally, $F^*(\bar{F}^*A)=( \bar{F}\circ F)^* A$ and $F^{*}(\phi \cdot A)= F^*\phi\cdot F^{*} A$ for $\phi \in C^\rho$ whenever $\rho \in (1-\alpha,1]$.
\end{prop}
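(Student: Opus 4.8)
\textbf{Proof plan for Proposition~\ref{prop:pullback2}.} The proof is a systematic application of the simplicial sewing lemma (Proposition~\ref{prop:sewing}) and Corollary~\ref{cor:norm_estimate_sewing}, with the inputs supplied by the simplicial-approximation estimates of Lemma~\ref{lem:mass bound_1} and Lemma~\ref{lem:mass bound_2}. The plan is as follows.

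\emph{Step 1: the germs lie in $C^{\eta,\gamma}_{2,k}$.} First I would verify the two seminorm bounds for $\Xi^{F^*A}(\sigma)=A(F_*\sigma)$. The $\eta$-bound $|A(F_*\sigma)|\lesssim \|A\|_{(\alpha,\beta)}|F_*\sigma|_{(\alpha,\beta)}\lesssim \|A\|_{(\alpha,\beta)}\|F\|_{(\alpha,\beta);\eta;k}\diam(\sigma)^\eta$ follows from Lemma~\ref{lem:pairing_bound} and the definition of $\|F\|_{(\alpha,\beta);\eta;k}$ (taking $\bar F=0$). For the $\gamma$-bound on $\delta\Xi^{F^*A}$, for a subdivision $\mathcal{K}|\sigma$ I write
\[
\delta_{\mathcal{K};\sigma}\Xi^{F^*A}=A(F_*\sigma)-\sum_{\sigma'\in\mathcal{K}}A(F_*\sigma')
=A(F^\sigma_*\sigma)-\sum_{\sigma'\in\mathcal{K}}A(F_*\sigma')
\]
using that $F^\sigma_*$ is affine on $\sigma$ so $A(F^\sigma_*\sigma)=\sum_{\sigma'\in\mathcal{K}}A(F^\sigma_*\sigma')$ by additivity of $A\in\Omega^k$; hence $\delta_{\mathcal{K};\sigma}\Xi^{F^*A}=\sum_{\sigma'\in\mathcal{K}}A(F^\sigma_*\sigma'-F_*\sigma')$, which is bounded by $\|A\|_{(\alpha,\beta)}|\mathcal{K}|\cdot|F|_{(\alpha,\beta);\sigma}\lesssim\|A\|_{(\alpha,\beta)}|\mathcal{K}|\,\vertiii{F}_{(\alpha,\beta);\gamma;k}\diam(\sigma)^\gamma$. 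The identical computation with $A(F_*\sigma)-A(\bar F_*\sigma)$ in place of $A(F_*\sigma)$, now using $|F;\bar F|_{(\alpha,\beta);\sigma}$ and $\|F;\bar F\|_{(\alpha,\beta);\eta;k}$, gives $\Xi^{F^*A;\bar F^*A}\in C^{\eta,\gamma}_{2,k}$. The antisymmetry $\Xi(\sigma)=-\Xi(-\sigma)$ is inherited from $A(-\omega)=-A(\omega)$ and $F_*(-\sigma)=-F_*\sigma$.

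\emph{Step 2: apply sewing and identify the norms.} Since $k-1<\eta\le k<\gamma$, Proposition~\ref{prop:sewing} applies and produces $F^*A=\mathcal{I}\Xi^{F^*A}\in\Omega^k$ with $F^*A(\sigma)=\lim_n\sum_{\sigma'\in\mathcal{K}_n}A(F_*\sigma')$ along regular subdivisions, matching \eqref{eq:def_pullback}. Corollary~\ref{cor:norm_estimate_sewing} (with $\alpha'=\eta-k+1$, which lies in $(0,1]$ since $k-1<\eta\le k$) converts the $\diam$-bound into the mass bound, giving $\|F^*A\|_{\eta-(k-1)}\lesssim\llbracket\Xi^{F^*A}\rrbracket_{(\eta,\gamma)}$, and combining with Step~1 yields the first two displayed inequalities (the second from linearity of $\mathcal{I}$ applied to $\Xi^{F^*A;\bar F^*A}=\Xi^{F^*A}-\Xi^{\bar F^*A}$). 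For the boundary estimate, one observes $\partial(F^*A)(\omega)=F^*(A)(\partial\omega)$ should be compared with the germ $\omega\mapsto A(F_*\partial\omega)$; here the relevant control is $\vertiii{F}_{(\beta,\infty);\tilde\gamma;k+1}$ (reflecting that $\partial A\in\Omega^{k+1}_{(\beta,\infty)}$), and one runs the sewing argument at level $k+1$ with regularity $(\gamma\wedge\tilde\gamma)$, producing $F^*A\in\Omega^k_{\alpha,(\gamma\wedge\tilde\gamma)-k}$ and \eqref{eq:derivative_pullback}. I expect the bookkeeping in matching $\partial(F^*A)$ with the sewn object at level $k+1$ — in particular checking that $\mathcal I$ commutes appropriately with $\partial$ here — to be the one genuinely delicate point, though it parallels the argument already carried out for $\partial(f\cdot A)$ in the proof of Theorem~\ref{thm:multiplication}.

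\emph{Step 3: functoriality and compatibility with multiplication.} For $F^*(\bar F^*A)=(\bar F\circ F)^*A$, I would use the characterization $F^*A(\sigma)=\lim_n\sum_{\sigma'\in\mathcal{K}_n}A(F_*\sigma')$ along regular subdivisions: choosing a regular sequence $\mathcal{K}_n|\sigma$ and noting $F_*$ sends it to a regular sequence $(F_*\mathcal{K}_n)$ subdividing $F_*\sigma$ — uniform Lipschitz bounds on $F$ ensure $\diam(F_*\sigma')\lesssim\diam(\sigma')$, preserving regularity — one gets $\bar F^*A(F_*\sigma)=\lim_n\sum A(\bar F_*F_*\sigma')=\lim_n\sum A((\bar F\circ F)_*\sigma')=(\bar F\circ F)^*A(\sigma)$, and then $F^*(\bar F^*A)(\sigma)=\lim_m\sum_{\sigma''\in\mathcal{K}_m}(\bar F^*A)(F_*\sigma'')$ collapses to the same limit by a diagonal/refinement argument. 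For $F^*(\phi\cdot A)=(F^*\phi)\cdot F^*A$ with $\rho\in(1-\alpha,1]$, both sides are defined as sewings of explicit germs ($\phi\cdot A$ via $\mu$-averages, then pulled back; versus $F^*A$ multiplied by $F^*\phi=\phi\circ F\in C^{\rho}$ — note $F$ Lipschitz so $\phi\circ F\in C^\rho$), and I would check the two germs differ by something of size $o(\diam(\sigma)^k)$, so uniqueness in Proposition~\ref{prop:sewing} forces equality; concretely, $\Xi^{F^*(\phi\cdot A)}(\sigma)=(\phi\cdot A)(F_*\sigma)$ is approximated by $\mu_{F_*\sigma}(\phi)A(F_*\sigma)$ up to $\diam^{\gamma+k-1+\alpha}$, while the other germ is $\nu_\sigma(\phi\circ F)A(F_*\sigma)$ for a pushed-forward measure $\nu_\sigma$, and $|\mu_{F_*\sigma}(\phi)-\nu_\sigma(\phi\circ F)|\lesssim\|\phi\|_{C^\rho}\diam(F_*\sigma)^\rho\lesssim\diam(\sigma)^\rho$, so the discrepancy is $O(\diam(\sigma)^{\rho+\eta})=o(\diam(\sigma)^k)$ since $\rho+\eta>1-\alpha+(k-1+\alpha)=k$. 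Finally, the continuity of $(F,A)\mapsto F^*A$ in the weaker topology $\Omega^k_{\tilde\alpha,\tilde\beta}$ with $\tilde\alpha<\alpha$, $\tilde\beta<(\alpha(1+\eta)-1)\wedge\beta$ follows by combining the difference estimates in Step~2 with the interpolation bounds $\|F;\bar F\|_{(\alpha,\beta);\eta;k}\lesssim_R\|F-\bar F\|_{L^\infty}^\kappa$ and $\vertiii{F;\bar F}_{(\alpha,\beta);\gamma;k}\lesssim_R\|F-\bar F\|_{L^\infty}^\kappa$ from Lemmas~\ref{lem:mass bound_1} and~\ref{lem:mass bound_2}, applied at $\eta,\gamma$ strictly below the critical thresholds corresponding to $\tilde\alpha,\tilde\beta$.
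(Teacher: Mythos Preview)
Your Steps~1 and~2 (establishing $\Xi^{F^*A}\in C^{\eta,\gamma}_{2,k}$ via the additivity identity $\delta_{\mathcal{K};\sigma}\Xi^{F^*A}=\sum_{\sigma'\in\mathcal{K}}A(F^\sigma_*\sigma'-F_*\sigma')$ and then invoking Corollary~\ref{cor:norm_estimate_sewing}) are correct and coincide with the paper's argument.

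For the boundary estimate the paper does \emph{not} sew at level $k+1$. Instead, for $\sigma\in\mathfrak{X}^{k+1}$ it bounds $|\partial(F^*A)(\sigma)|$ directly by the triangle inequality against $A(\partial F_*\sigma)$: the term $|A(\partial F_*\sigma)|\le\|\partial A\|_\beta\,|F_*\sigma|_{(\beta,\infty)}\lesssim\diam(\sigma)^{\tilde\gamma}$ uses $\|F\|_{(\beta,\infty);\tilde\gamma;k+1}$, while $|F^*A(\partial\sigma)-A(\partial F_*\sigma)|\le\sum_{B\in\Bd(\sigma)}|F^*A(B)-\Xi^{F^*A}(B)|\lesssim\diam(\sigma)^\gamma$ is just the level-$k$ sewing error summed over faces (note $\sum_{B}\Xi^{F^*A}(B)=A(\partial F_*\sigma)$ by linearity of $A$). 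Then Lemma~\ref{lem:equivalent norms} converts the resulting $\diam(\sigma)^{\gamma\wedge\tilde\gamma}$ bound to the mass bound. This is exactly the parallel with Theorem~\ref{thm:multiplication} you allude to; no separate sewing or commutation of $\mathcal{I}$ with $\partial$ is required.

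Your functoriality argument has a genuine gap: the claim that $(F_*\mathcal{K}_n)$ is a subdivision of $F_*\sigma$ is false. Since $F$ is not affine, for $\sigma'\in\mathcal{K}_n$ the simplices $F_*\sigma'$ need not tile $F_*\sigma$ --- their vertices are $F$-images of interior points of $\pmb{\sigma}$, which are generically not convex combinations of the $F(v_i)$. The paper's route is instead via sewing uniqueness: from $\bar F_*(F_*\sigma)=(\bar F\circ F)_*\sigma$ one gets $\Xi^{(\bar F\circ F)^*A}(\sigma)=\Xi^{\bar F^*A}(F_*\sigma)$, and since $\Xi^{F^*(\bar F^*A)}(\sigma)=(\bar F^*A)(F_*\sigma)$ differs from this by the sewing error $|(\bar F^*A)(F_*\sigma)-\Xi^{\bar F^*A}(F_*\sigma)|\lesssim\diam(F_*\sigma)^\gamma\lesssim\diam(\sigma)^\gamma$, the two germs agree up to $O(\diam^\gamma)$ with $\gamma>k$, and uniqueness in Proposition~\ref{prop:sewing} yields the identity. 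Your germ-comparison argument for $F^*(\phi\cdot A)=(F^*\phi)\cdot F^*A$ is correct and matches the paper's.
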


\begin{proof}
First note that by Lemma~\ref{lem:pairing_bound}
\begin{equ}\label{eq:local increment}
\llbracket\Xi^{F^*A }\rrbracket_\eta \leq    \|A\|_{ \alpha} \| F\|_{( \alpha, \beta);\eta;k} \ , \qquad \llbracket\Xi^{F^*A ; \bar F^*A}\rrbracket_\eta \leq \|A\|_{ (\alpha, \beta)}\| F;\bar F\|_{( \alpha, \beta);\eta;k}   \;.
\end{equ} 
To see
\begin{equ}\label{eq:local increment2}
\llbracket \delta \Xi^{F^*A}\rrbracket_{\gamma} \leq \|A\|_{(\alpha,\beta)} \vertiii{ F}_{( \alpha, \beta);\gamma;k}   \;,
 \qquad 
  \llbracket \delta_{\sigma, \mathcal{K}} \Xi^{F^*A ; \bar F^*A} \rrbracket_\gamma \leq \|A\|_{(\alpha, \beta)} \vertiii{ F; \bar{ F}}_{(\alpha,\beta);\gamma;k}  \end{equ}
note that for $\mathcal{K}|\sigma$
\begin{align*}
|\delta_{\sigma, \mathcal{K}} \Xi^F|&= \left| 
A(F_* (\sigma)) - \sum_{\sigma'\in \mathcal{K} } A(F_* (\sigma')) \right| \leq \sum_{\sigma'\in \mathcal{K} } |A (F^{\sigma}_*\sigma'- F_*\sigma')| \\
&\leq  \|A\|_{(\alpha, {\beta}) }  \sum_{\sigma'\in \mathcal{K} }  |F^{\sigma}_*\sigma'- F_*\sigma'|_{( \alpha, \beta)} \\
&\leq  \|A\|_{(\bar{\alpha}, \bar{\beta})}   |\mathcal{K}| \vertiii{F}_{( \alpha, \beta) ;\gamma; k} \diam(\sigma)^\gamma \ .
\end{align*}
 The second inequality of \eqref{eq:local increment2} follows similarly. Thus, we conclude the first part of the proposition by Corollary~\ref{cor:norm_estimate_sewing}.
 
To see the second part, note that 
$$ |A ( \partial F_* \sigma) |  \leq \| \partial A \|_{\beta} \|F\|_{(\beta,\infty); \tilde{\gamma}; k+1 }\diam(\sigma)^{\tilde{\gamma}}$$
and 
$$|F^*A( \partial \sigma) - A ( \partial F_* \sigma) |\leq  \sum_{B\in \Bd(\sigma)}  | \Xi^{F^*A} (B) - F^* A(B) |   \lesssim \llbracket \delta \Xi^{F^*A}\rrbracket_{\gamma} \diam(\sigma)^{\tilde{\gamma}} \ .  $$
Therefore by \eqref{eq:local increment2}
\begin{align*}
 |F^*A( \partial \sigma) |&\leq  |A ( \partial F_* \sigma) | + |F^*A( \partial \sigma) - A ( \partial F_* \sigma) | \\
 &\lesssim  \|A\|_{(\alpha,\beta)} (\vertiii{ F}_{( \alpha, \beta);\gamma;k} \vee \|F\|_{(\beta,\infty); \tilde{\gamma}; k+1 })\diam(\sigma)^{\tilde{\gamma}\wedge\gamma} \ . 
\end{align*}
Thus, by Lemma~\ref{lem:equivalent norms}
$$\|\partial F^*A \|_{(\gamma \wedge \tilde{\gamma})- k }\lesssim  \|A\|_{(\alpha,\beta)} (\| F\|_{( \alpha, \beta);\gamma} \vee \|F\|_{(\beta,\infty); \tilde{\gamma}; k+1; }) \ $$
 and \eqref{eq:derivative_pullback} follows similarly.

%
 
%
%
The final identities follow from the uniqueness part of the sewing lemma.
The first of these follows from the fact that $G_* (F_*(\sigma))= (G\circ F)_* \sigma$. 
For the second identity, note that $\Xi_{\mu}^{(\phi\circ F) \cdot F^*A }(\sigma)= \mu_\sigma (F^* \phi) \cdot (F^*A )(\sigma)$,
$$
\Xi^{F^{*}(\phi \cdot A)}(\sigma)= (\phi\cdot A)(F_*\sigma),
$$
and that
$$
\Xi_{\mu}^{\phi \cdot A} (F_*\sigma) =\mu_\sigma (F^* \phi)  A(F_*\sigma)= \mu_\sigma (F^* \phi)  \Xi^{ F^*A}(\sigma)\ .
$$
Therefore
\begin{align*}
|\Xi^{F^{*}(\phi \cdot A)} (\sigma)- \Xi_{\mu}^{(F^* \phi) \cdot F^*A }(\sigma)| 
&\leq |\Xi^{F^{*}(\phi \cdot A)} (\sigma)- \Xi_{\mu}^{\phi \cdot A} (F_*\sigma)| +|\Xi_{\mu}^{\phi \cdot A} (F_*\sigma)- \Xi_{\mu}^{(F^* \phi) \cdot F^*A }(\sigma)| \\
&= |(\phi \cdot A) (F_{*}\sigma)- \Xi_{\mu}^{\phi \cdot A} (F_*\sigma)| +|\mu_\sigma (F^* \phi) \big( \Xi^{ F^*A}- F^*A\big)(\sigma)| \\
\end{align*}
and the claim follows by \eqref{eq:local increment2} together with Theorem~\ref{thm:multiplication}.
\end{proof}

\section{Embeddings into distribution spaces}\label{sec:embedding}
For $1\leq k\leq d$, we write $\mathfrak{C}^d_{k}:= \{J\subset \{1,...,d\}\ : \ |J|=k \}$.
Given $J\in \mathfrak{C}^d_{k}$ we also write $J^{c}= \{1,..,d\}\setminus J$ and denote by
 $E^J$ the hyperplane spanned by $(e_j : j\in J)$ equipped with its canonical orientation.  
Given  $v,w\in \mathbb{R}^d$ we write $\llbracket w, v \rrbracket :=[w_1,v_1]\times...\times [w_d,v_d]$. 

Given $A\in \Omega^k$, $J\in \mathfrak{C}^d_{k}$ and $\psi \in C^\infty_c(\mathbb{R}^d)$ we define
\begin{equ}\label{eq:embedding}
 \langle\pi_J A, \psi \rangle := (-1)^{|J|} \int_{E^{J^c}} \int_{E^J} A(\llbracket w,w+v \rrbracket) D^{J} \psi(v+w) dv dw\ .
 \end{equ} 
The following is a higher dimensional analogue of \cite[Prop.~3.21]{Che18}.
\begin{prop}\label{prop:embedding}
In the setting above, the map characterised by \eqref{eq:embedding} restricts to a bounded linear map
$\pi_J: \Omega^k_{\alpha, 0}  \to C^{\alpha-1}$. Furthermore the map 
$$\pi  :  \Omega^k_{\alpha, 0} \to \left( C^{\alpha-1}\right)^{\mathfrak{C}^d_k}, \qquad A \mapsto \pi(A)= \big( \pi_J(A) : J \in \mathfrak{C}^d_k \big)$$ 
is injective.
\end{prop}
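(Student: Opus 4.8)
The plan is to split the statement into three parts: (i) the formula \eqref{eq:embedding} defines a distribution $\pi_J A$ for each $J$; (ii) it lies in $C^{\alpha-1}$ with norm controlled by $\|A\|_{\alpha,0}$; (iii) the assembled map $\pi$ is injective. For (i)--(ii), I would test against a scaled bump $\psi = \psi_x^\lambda := \lambda^{-d}\psi_0((\cdot-x)/\lambda)$ with $\psi_0$ supported in the unit ball and estimate $|\langle \pi_J A, \psi_x^\lambda\rangle| \lesssim \lambda^{\alpha-1}$ uniformly in $x$, which is the defining bound for $C^{\alpha-1}$. The key point is that $A(\llbracket w, w+v\rrbracket)$ is, for $v$ in the $k$-plane $E^J$ and $w$ ranging over $E^{J^c}$, the evaluation of $A$ on a $k$-cube; by Definition~\ref{def:dist_form} and \eqref{eq:comparison}, $|A(\llbracket w, w+v\rrbracket)| \lesssim \|A\|_\alpha \, \mass_\alpha^k \lesssim \|A\|_\alpha \prod_{j\in J}|v_j|^{?}$, but more precisely one uses that for a cube of side lengths $\ell_1,\dots,\ell_k$ the mass is at most $(\prod_i \ell_i)\cdot(\min_i \ell_i)^{\alpha-1}$ up to a constant (subdivide the cube into $\prod \ell_i/(\min\ell_i)^k$ sub-cubes of side $\min_i\ell_i$ and use additivity, Lemma~\ref{lem:equivalent norms}/Lemma~\ref{lem:(alpha,k) mass}). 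Because $\beta=0$ imposes no constraint on $\partial A$, only the bound on $A$ itself is available, which is exactly why the target regularity is $\alpha-1$ and not better.

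Carrying this out: after substituting $\psi_x^\lambda$ into \eqref{eq:embedding} and changing variables so that $v+w$ ranges over the support (a ball of radius $\lesssim\lambda$ around $x$), the $v$-integral is over a region of size $\lesssim\lambda$ in each of the $k$ directions of $E^J$, the $w$-integral over a region of size $\lesssim\lambda$ in each of the $d-k$ directions of $E^{J^c}$, $|D^J\psi_x^\lambda|\lesssim \lambda^{-d-k}$, and $|A(\llbracket w,w+v\rrbracket)| \lesssim \|A\|_\alpha \big(\prod_{j\in J}|v_j|\big)\big(\min_{j\in J}|v_j|\big)^{\alpha-1} \lesssim \|A\|_\alpha\,\lambda^{k-1+\alpha}$; multiplying the volume factors $\lambda^{k}\cdot\lambda^{d-k}$ gives $\|A\|_\alpha \lambda^{-d-k}\cdot\lambda^{k-1+\alpha}\cdot\lambda^{d} = \|A\|_\alpha\lambda^{\alpha-1}$, as desired. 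One must also check the integral is well-defined (absolute convergence in $w$, using compact support of $\psi$) and linear; both are immediate from the estimate. A minor technical point is that $A$ is only defined on simplices, so $A(\llbracket w,w+v\rrbracket)$ should be read via the decomposition of a $k$-cube into $k!$ simplices as in Definition~\ref{def:cube}; measurability of the integrand follows from Definition~\ref{def:kform}.

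For injectivity, suppose $\pi_J(A)=0$ for every $J\in\mathfrak C^d_k$; I must show $A(\sigma)=0$ for all $\sigma\in\mathfrak X^k$. The strategy is: first show each $\pi_J A = 0$ forces $A$ to vanish on all axis-aligned $k$-cubes lying in translates of $E^J$. Indeed, $\langle \pi_J A,\psi\rangle=0$ for all $\psi$ means $\int\int A(\llbracket w,w+v\rrbracket)D^J\psi(v+w)\,dv\,dw = 0$; integrating by parts formally, $A(\llbracket w,w+v\rrbracket)$ as a function of $(v,w)$ is a "primitive" in the $J$-directions of the distribution $\pi_J A$, so $\pi_J A = 0$ means this function is (a.e., hence by continuity of $A$ on cubes everywhere) independent of... — more carefully, one tests with $\psi$ of product form $\psi(y) = \prod_{j\in J}\chi_j(y_j)\prod_{j\in J^c}\rho_j(y_j)$ and, choosing the $\chi_j$ to approximate indicator-difference kernels (so $D^J$ applied to a product of $k$ primitives of such recovers differences over a cube), deduces that $\int_{E^{J^c}} \big(\text{signed sum of } A \text{ over faces of an axis cube in } w+E^J\big)\rho(w)\,dw = 0$ for all $\rho$, hence the inner signed sum vanishes for a.e. $w$, hence for all $w$ by the continuity/additivity of $A$. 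Since $A$ is additive over subdivisions (Remark~\ref{rem:relation}) and every axis-aligned $k$-cube is a difference of "corner" cubes $\llbracket w, w+v\rrbracket$, this shows $A$ vanishes on all axis-parallel $k$-cubes in every coordinate $k$-plane direction. Finally I'd extend from axis-parallel cubes in coordinate planes to arbitrary simplices: any $k$-simplex spans a $k$-plane which is a limit of coordinate-rational $k$-planes, and within any $k$-plane $A$ restricted there is an additive function vanishing on a generating family of cubes — here one invokes the Whitney covering (Lemma~\ref{lem:covering}, in the form of Lemma~\ref{lem:equivalent norms}, Item~\ref{item_1}) to write $A(\sigma) = \sum_{n,i} A(Q^i_{2^{-n}})$ as an absolutely convergent sum of values on small cubes, each of which can be rotated into an axis-parallel configuration only if the ambient plane is a coordinate plane — so the genuinely general-plane case requires approximating $\sigma$ itself in $\mathcal B^k_{(\alpha,0)}$ by chains supported on coordinate $k$-planes, using Lemma~\ref{lem:distance between simplices} or Corollary~\ref{cor:dyadic_diff_est} together with Lemma~\ref{lem:pairing_bound} to pass $A(\sigma_n)\to A(\sigma)$.

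\textbf{Main obstacle.} The estimate (parts (i)--(ii)) is routine scaling bookkeeping; the real work is injectivity, specifically the last step: going from "$A$ vanishes on axis-parallel cubes in coordinate $k$-planes" to "$A$ vanishes on all $k$-simplices." The clean route is to approximate an arbitrary $\sigma$ in the flat norm $|\cdot|_{(\alpha,0)}$ by chains supported in coordinate hyperplanes (projecting vertices onto a fine grid and tilting, controlled via Lemma~\ref{lem:distance between simplices}) — but one must verify such approximants can themselves be taken to be combinations of axis-parallel cubes on which $A$ is already known to vanish, and that the error estimate has no $\beta$-term obstruction (it does not, since $\beta=0$ and the relevant bound in Lemma~\ref{lem:distance between simplices} is $\lesssim r^{k-1}\mu^\alpha + r^k\mu^0$, the second term not decaying — so instead one should keep the chain exactly equal to $\sigma$ on coordinate-plane pieces and only perturb within a single plane, making this a purely $(k-1+\alpha)$-dimensional density argument). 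Ensuring the density argument closes without needing control on $\partial A$ is the subtle part, and is presumably where the hypothesis $\alpha \in (0,1)$ (rather than $\alpha=1$) and the precise form of the mass are used.
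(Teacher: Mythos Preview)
Your scaling computation for the $C^{\alpha-1}$ bound contains a genuine gap. When $\psi = \psi^\lambda_x$ with $x$ away from the origin, the constraint $v+w \in \supp \psi^\lambda_x$ (with $v\in E^J$, $w\in E^{J^c}$) forces $v$ to lie in a $\lambda$-ball around $x_J := \sum_{j\in J} x_j e_j$, not around $0$. The cube $\llbracket w, w+v\rrbracket$ therefore has side-lengths $|v_j|\approx |x_j|$, which are \emph{not} of order $\lambda$, so your inequality $\prod_{j\in J}|v_j|\cdot(\min_{j\in J}|v_j|)^{\alpha-1}\lesssim \lambda^{k-1+\alpha}$ is false for general $x$. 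A change of integration variable cannot repair this, since it leaves the side-lengths of the cube unchanged. The paper supplies the missing step: because $\int D^J\psi = 0$, any summand independent of a single $v_j$ (for $j\in J$) integrates to zero against $D^J\psi(v+w)$; combined with the additivity of $A$, this lets one iteratively shift the base corner of the cube so that after the shift the side-lengths are genuinely $\lesssim\lambda$. Only then does the estimate $|A(\text{cube})|\lesssim \|A\|_\alpha\lambda^{k-1+\alpha}$ hold and the bookkeeping you wrote go through.

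On injectivity your route diverges from the paper's. The paper argues in one line: vanishing of all $\pi_J A$ forces, via the fundamental lemma of the calculus of variations, the map $E^J\ni v\mapsto A(\llbracket w,w+v\rrbracket)$ to be constant for a.e.\ $w$, hence identically zero since it vanishes at $v=0$; it does not attempt the passage to general simplices that you outline. Your proposed route---approximating a tilted simplex by axis-aligned chains in the $(\alpha,0)$-flat norm---runs into exactly the obstacle you flag: with $\beta=0$ the term $r^k\mu^\beta=r^k$ in Lemma~\ref{lem:distance between simplices} does not decay as $\mu\to 0$, so the $(\alpha,0)$-flat norm does not separate a tilted simplex from its staircase approximants, and the argument does not close as written.
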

Recall that $\mass_0(\sigma)=1$ whenever $\sigma\neq 0$ and thus $\Omega^k_{\alpha, 0}= \{A \in \Omega^k \ : \  \|A\|_{\alpha}<+\infty\},$
\begin{remark}\label{rem:notation}
Note that this suggests notation $A= \sum_{J \in \mathfrak{C}^d_{k}} (\pi_J A)(x) dx^J$, which is clearly an identity if $A$ arose from a smooth $k$-form. 
\end{remark}
\begin{proof}
Noting that for any simplex $\sigma\in \mathfrak{X}^k$ one has
$\int_{E^{J^c}} \int_{E^J} A(\sigma) D^{J} \psi(v+w) dv dw =0 $ and thus 
$$  \int_{E^{J^c}} \int_{E^J} A(\llbracket w,w+v \rrbracket) D^{J} \psi(v+w) dv dw = \int_{E^{J^c}} \int_{E^J} A(\llbracket w +x,w+x+v \rrbracket) D^{J} \psi(v+w) dv dw\ .
$$
Let $\psi\in C^{k+1}_{c}(B_{1/2}(0))$ satisfy $|\psi|_{C^{k+1}}\leq 1$ and write $\psi^{\lambda}_{x}:= \lambda^{-d}\psi\big( (\cdot- x)/\lambda \big)$, then
$$ |\langle\pi_{J} A, \psi^\lambda_x \rangle| \lesssim \lambda^{-k} \sup_{v\in E^J, w\in E^{J^c}, v+w \in \supp (\psi^{\lambda}_{x})  } |A(\llbracket w +x,w+x+v \rrbracket)| \lesssim \| A\|_{\alpha} \lambda^{-k} \lambda^{k-1+\alpha}\ .$$
It remains to check the map is injective. 
Indeed if $\pi_{J} A=0$ for all $J \in  \mathfrak{C}^d_{k}$, this implies by the fundamental Lemma of calculus of variations that for almost every $w$ the function $E^J\ni v\mapsto A(\llbracket w,w+v\rrbracket)\in \mathbb{R}$ is the constant function. Since $A[w,w]=0$ this concludes the proof.
\end{proof}
%

\begin{prop}\label{prop:isomorphism}
In the special case $k=d$ the map $\pi$ is surjective with bounded inverse.
\end{prop}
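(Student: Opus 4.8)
The plan is the following. Since $\mathfrak{C}^d_d=\{\{1,\dots,d\}\}$ is a singleton, the codomain is just $C^{\alpha-1}(\mathbb{R}^d)$, and since Proposition~\ref{prop:embedding} already gives that $\pi$ is bounded and injective, it suffices to construct, for each $f\in C^{\alpha-1}(\mathbb{R}^d)$, an element $A_f\in\Omega^d_{\alpha,0}$ with $\pi(A_f)=f$ and $\|A_f\|_\alpha\lesssim\|f\|_{C^{\alpha-1}}$; the resulting bounded right inverse $f\mapsto A_f$ is then forced to equal $\pi^{-1}$. The candidate is the ``integral of $f$ over $\sigma$'', made sense of by mollification: fixing a mollifier $\rho$, I would set $f_\eps:=f*\rho_\eps$ and $A_\eps(\sigma):=\int_\sigma f_\eps$ (with sign dictated by the orientation of $\sigma$), each of which lies in $\Omega^d$ since it is additive over subdivisions and Borel in the vertices.

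The technical core is the $\eps$-uniform estimate
\[
\Big|\int_Q f_\eps\Big|\;\lesssim\;\|f\|_{C^{\alpha-1}}\,\diam(Q)^{d-1+\alpha}\qquad\text{for all }Q\in\mathfrak{Q}^d_{\le1},\ \eps>0 .
\]
By covariance under rotations and translations (neither $\|f\|_{C^{\alpha-1}}$ nor the scale of the mollifier is affected) I may take $Q=[0,r]^d$. Writing $\phi_\eps:=\indic_Q*\tilde\rho_\eps$ with $\tilde\rho(\cdot)=\rho(-\cdot)$, one has $\int_Q f_\eps=\langle f,\phi_\eps\rangle=\sum_{j\ge-1}\langle\Delta_j f,\phi_\eps\rangle$ (Littlewood--Paley). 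For $2^j\lesssim r^{-1}$ I would use $\|\Delta_j f\|_{L^\infty}\lesssim 2^{j(1-\alpha)}\|f\|_{C^{\alpha-1}}$ together with $\|\phi_\eps\|_{L^1}\lesssim r^d$ and sum the geometric series, obtaining a contribution $\lesssim\|f\|_{C^{\alpha-1}}r^{d-1+\alpha}$. For $2^j\gtrsim r^{-1}$ I would instead exploit that $\indic_Q\in BV$ with $\|D\indic_Q\|_{\mathrm{TV}}=\mathrm{Per}(Q)\lesssim r^{d-1}$, which gives $\|\Delta_j\phi_\eps\|_{L^1}\le\|\Delta_j\indic_Q\|_{L^1}\lesssim 2^{-j}r^{d-1}$ (write $\Delta_j$ as $2^{-j}$ times a divergence of $L^1$-bounded kernels); pairing this against $2^{j(1-\alpha)}\|f\|_{C^{\alpha-1}}$ yields $|\langle\Delta_j f,\phi_\eps\rangle|\lesssim 2^{-j\alpha}\|f\|_{C^{\alpha-1}}r^{d-1}$, and summing over $2^j\gtrsim r^{-1}$ again produces $\lesssim\|f\|_{C^{\alpha-1}}r^{d-1+\alpha}$. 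With this cube bound in hand, Lemma~\ref{lem:equivalent norms} (Item~1 $\Rightarrow$ Item~2, using that $A_\eps$ is additive and that $\int_\sigma f_\eps=\sum_{n,i}\int_{Q^i_{2^{-n}}}f_\eps$ converges absolutely over a Whitney decomposition) upgrades it to $\|A_\eps\|_\alpha\lesssim\|f\|_{C^{\alpha-1}}$, uniformly in $\eps$.

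Next I would pass to the limit $\eps\to0$: the same decomposition applied to $\phi_\eps-\phi_{\eps'}$ shows $\eps\mapsto\int_\sigma f_\eps$ is Cauchy for each fixed $\sigma$, so $A(\sigma):=\lim_{\eps\to0}A_\eps(\sigma)$ is well defined; additivity over subdivisions survives this finite limit, measurability is preserved under pointwise limits, and $|A(\sigma)|\le(\sup_\eps\|A_\eps\|_\alpha)\,\mass^d_\alpha(\sigma)$, so $A\in\Omega^d_{\alpha,0}$ with $\|A\|_\alpha\lesssim\|f\|_{C^{\alpha-1}}$. To identify $\pi(A)=f$: for $k=d$ the plane $E^{J^c}$ in \eqref{eq:embedding} degenerates to a point, whence $\langle\pi A,\psi\rangle=(-1)^d\int A(\llbracket0,v\rrbracket)\,\partial_1\cdots\partial_d\psi(v)\,dv$ for $\psi\in C^\infty_c$. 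Setting $G_\eps(v):=\int_{\llbracket0,v\rrbracket}f_\eps$, one has $\partial_1\cdots\partial_d G_\eps=f_\eps$, and $|G_\eps(v)|\le\|A_\eps\|_\alpha\,\mass^d_\alpha(\llbracket0,v\rrbracket)$ is locally bounded in $v$ uniformly in $\eps$ (by \eqref{eq:comparison} and subadditivity), while $G_\eps(v)\to A(\llbracket0,v\rrbracket)$ pointwise; dominated convergence and integration by parts then give $\langle\pi A,\psi\rangle=(-1)^d\lim_\eps\int G_\eps\,\partial_1\cdots\partial_d\psi=\lim_\eps\int(\partial_1\cdots\partial_d G_\eps)\psi=\lim_\eps\langle f_\eps,\psi\rangle=\langle f,\psi\rangle$. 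Together with the boundedness and injectivity of $\pi$ from Proposition~\ref{prop:embedding} and the bound $\|A_f\|_\alpha\lesssim\|f\|_{C^{\alpha-1}}$, this shows $\pi$ is an isomorphism with bounded inverse.

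The step I expect to be the main obstacle is the high-frequency half of the cube estimate — the $BV$ bound $\|\Delta_j\indic_Q\|_{L^1}\lesssim 2^{-j}\,\mathrm{Per}(Q)$ and its interplay with $\|\Delta_j f\|_{L^\infty}\lesssim 2^{j(1-\alpha)}\|f\|_{C^{\alpha-1}}$. This is precisely where the hypothesis $\alpha-1>-1$ is used, and it is what underlies the sharpness of the threshold $\eta>k-1$ flagged in Remark~\ref{rem:non-atomic}. Everything else (the mollification scheme, the additivity/measurability bookkeeping, and the integration-by-parts identification of $\pi(A)$) is routine.
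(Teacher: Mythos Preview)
Your argument is correct, but it follows a genuinely different route from the paper's.

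The paper constructs the inverse directly: for $F\in C^{\alpha-1}$ it \emph{defines} $A_F(\sigma):=\mathrm{sign}(\sigma)\sum_{n,i}F(\phi^{\simplex}_{n,i})$, where $\{\phi^{\simplex}_{n,i}\}$ is the smooth partition of unity subordinate to a Whitney decomposition of $\simplex$ built in Remark~\ref{rem: partition of unity}. Each $\phi^{\simplex}_{n,i}$ is a test function at scale $2^{-n}$, so $|F(\phi^{\simplex}_{n,i})|\lesssim\|F\|_{C^{\alpha-1}}2^{-n(d-1+\alpha)}$, and summing exactly as in \eqref{eqs:estimates_whitney} gives the $\diam(\sigma)^{d-1+\alpha}$ bound in one stroke; the identity $\pi\circ\iota=\id$ is then checked on smooth $F$ and extended by continuity. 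There is no mollification and no Littlewood--Paley: the Whitney machinery already set up for Lemma~\ref{lem:equivalent norms} is reused wholesale.

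Your approach instead mollifies $f$, proves the key cube estimate by a Littlewood--Paley splitting together with the $BV$ bound $\|\Delta_j\indic_Q\|_{L^1}\lesssim 2^{-j}\mathrm{Per}(Q)$, invokes Lemma~\ref{lem:equivalent norms} to pass from cubes to simplices, and only then takes $\eps\to0$. The identification $\pi(A)=f$ is done by an explicit iterated integration by parts rather than by density. Both methods make the same threshold $\alpha>0$ visible (yours through the summability of $2^{-j\alpha}$, the paper's through the Whitney sum), but the paper's is more self-contained within its own framework, while yours imports standard harmonic-analysis tools and would transplant more readily to other function-space scales. The trade-off is that your route carries more moving parts (mollification limits, dominated convergence on Whitney covers, the $BV$ kernel identity), whereas the paper's avoids all limits in the construction itself.
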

\begin{proof}
Fix an orientation $o$ on $\mathbb{R}^d$.
Given any oriented $d$-simplex $\sigma=(\pmb{\sigma},o_{\sigma})$ we define $\mathrm{sign}(\sigma) = 1$ if $o_{\sigma} = o$ and $\mathrm{sign}(\sigma) = - 1$ otherwise.  
We also set $\mathbf{1}_{\sigma} := \mathrm{sign}(\sigma) \mathbf{1}_{\pmb{\sigma}}$
Furthermore fix $\phi$ as in Remark~\ref{rem: partition of unity} and denote by $\big(\phi^{\simplex}_{n,i}\big)_{n\in \mathbb{N}, i\in I_n}$ the partition of unity constructed from $\phi\in C^\infty$ subordinate to a Whitney decomposition of $\simplex$.

Then, for $F\in C^{\alpha-1}$ one can set\footnote{One can check that the definition is independent of the specific choice of $\phi$.}
 $$A_F(\sigma):= F(  \mathbf{1}_\sigma): =  \mathrm{sign}(\sigma)  \sum_{n\in \mathbb{N},i\in I_n} F(\phi^{\simplex}_{n,i})\;.$$ Let
$\eta=d-1+\alpha$, one then finds that
 \begin{align*}
\sum_{n\in \mathbb{N},i\in I_n} |F(\phi^{\simplex}_{n,i})| &\lesssim \sum_{n\in \mathbb{N},i\in I_n}  \|F\|_{C^{\alpha-1}} 2^{-nd}  2^{-n(\alpha-1)}\lesssim \|F\|_{C^{\alpha-1}} \sum_{n\in \mathbb{N}: 2^{-n}<\diam(\sigma) } |I_n|  2^{-n\eta } \\
&\lesssim \|F\|_{C^{\alpha-1}}  \diam(\sigma)^{\eta},
\end{align*}
where in the last step one argues ad verbatim like in \eqref{eqs:estimates_whitney}. Thus, the following linear map is bounded
$$\iota: C^{\alpha-1}\to \Omega_{\alpha,0}, \qquad F\mapsto A_F\ .$$
When $F$ is smooth, it is easily checked that
$$\big( \pi \circ \iota \big) (F) (w)= \pi A_F (w) = D^{J}_{v} F(\mathbf{1}_{\llbracket w, w+v \rrbracket} )= F(w)\ .$$
Thus, by continuity of $\iota$, it is a right inverse to $\pi$ which implies that $\pi$ is surjective.


\end{proof}

%
%
%

\section{Application to random fields}\label{sec:application to random fields}
\subsection{A Kolmogorov criterion}
\begin{prop}\label{prop:kolmogorov}
Fix a compact set $\mathfrak{K} \subset \mathbb{R}^{d}$. Let  $A=\big( A_{\sigma}: \sigma \in \mathfrak{X}^{k}(\mathfrak{K}) \big)$ be a $\mathfrak{X}^{k}(\mathfrak{K})$-indexed stochastic process such that the map $S\ni \sigma \mapsto A(\sigma) = A_{\sigma}$ satisfies the relations in Remark~\ref{rem:relation} for any finite subset $S\subset\mathfrak{X}^{k}(\mathfrak{K})$ almost surely. 
Assume furthermore that for some $\tilde{\alpha}, \tilde{\beta} \in (0,1]$ and $q \in \mathbb{N}$,
\begin{equ}\label{eq:kolmogorov_assumption}
M_{q} = \sup_{\osimplex\in \mathfrak{X}_{\le 1}^k(\mathfrak{K}) } \frac{ \EE[|A(\osimplex)|^q]}{ \mass^{k}_{\tilde{\alpha}}(\osimplex)^{q} } +\sup_{\osimplex\in \mathfrak{X}_{\le 1}^{k+1}(\mathfrak{K})} \frac{\EE[|A(\partial\osimplex)|^q]}{ \mass^{k+1}_{\tilde{\beta}}(\osimplex)^{q} } < +\infty\;
\end{equ}
Then, for any $(\alpha,\beta) \in (0,  \tilde{\alpha} \wedge \tilde{\beta} -  d(k+1)/q) \times (0,  \tilde{\beta} -  d(k+2)/q)$, there exists a modification $\hat{A} \in \Omega^{k}_{\alpha,\beta} (\mathfrak{K})$ with
\begin{equ}\label{eq:moment_bound}
\EE[ \|\hat{A}\|_{\alpha,\beta ; \mathfrak{K}}^{q} ] \le M_{q}\;. 
\end{equ}
\end{prop}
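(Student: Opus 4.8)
The plan is to run a multi‑parameter Kolmogorov continuity argument tailored to the mass functionals $\mass^k_\alpha$, built on two ingredients already available: the equivalent characterisation of the spaces in terms of $\diam(\cdot)^{k-1+\alpha}$ (Lemma~\ref{lem:equivalent norms}) and the ``vertex‑by‑vertex'' decomposition of differences of simplices used in Lemma~\ref{lem:distance_simplices_only one varying vertex}, Lemma~\ref{lem:distance between simplices} and Corollary~\ref{cor:dyadic_diff_est}. First I would fix, for each $n\in\mathbb{N}$, a $2^{-n}$‑net $D_n\subset\mathfrak{K}$ with $\#D_n\lesssim 2^{nd}$ together with maps $\pi_n:\mathfrak{K}\to D_n$ with $d(x,\pi_n x)\lesssim 2^{-n}$ uniformly, extended to simplices by acting on vertices so that $\pi_n\sigma\in\mathfrak{X}^k(\mathfrak{K})$ for $\sigma\in\mathfrak{X}^k(\mathfrak{K})$. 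The candidate modification is
\[
\hat A(\sigma):=\lim_{n\to\infty}A(\pi_n\sigma),\qquad \sigma\in\mathfrak{X}^k(\mathfrak{K}),
\]
which is Borel in the vertices as a pointwise limit of Borel maps; one must then show the limit exists almost surely, that $\hat A$ is a version of $A$, that it defines an element of $\Omega^k$, and that it obeys \eqref{eq:moment_bound}.

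\textbf{The dyadic a priori bound.} Call a $j$‑simplex \emph{$n$‑admissible} if its vertices lie in $D_n$ and its diameter is $\le 1$; for such a simplex of diameter in $[2^{-m-1},2^{-m}]$ there are at most $\lesssim 2^{nd}\,2^{(n-m)dj}$ of them, and by \eqref{eq:comparison} its mass is $\lesssim 2^{-m(j-1+\tilde\gamma)}$. Applying Chebyshev to \eqref{eq:kolmogorov_assumption}, a union bound over $n$‑admissible simplices, and summing over the scales $m\le n$ and over $n$, one sees the relevant series converge once one inserts a weight $2^{-\varepsilon n}$ with $\varepsilon q>d(k+1)$ at the level of $k$‑simplices and $2^{-\varepsilon' n}$ with $\varepsilon' q>d(k+2)$ at the level of $(k+1)$‑simplices — this is exactly where the thresholds $d(k+1)/q$ and $d(k+2)/q$ appear. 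Fixing a small $\delta>0$ and setting $\varepsilon=d(k+1)/q+\delta$, $\varepsilon'=d(k+2)/q+\delta$, Borel--Cantelli therefore yields a full‑measure event on which the random variable $K$, defined as $\sup_n 2^{-\varepsilon n}$ times the supremum of $|A(\sigma)|/\mass^k_{\tilde\alpha}(\sigma)$ over $n$‑admissible $k$‑simplices $\sigma$, plus the analogous quantity with weight $2^{-\varepsilon' n}$ and $|A(\partial\omega)|/\mass^{k+1}_{\tilde\beta}(\omega)$ over $n$‑admissible $(k+1)$‑simplices $\omega$, is finite; bounding this supremum by an $\ell^q$‑sum and re‑using the counting gives $\EE[K^q]\lesssim M_q$. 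I would also intersect with the full‑measure event on which the relations of Remark~\ref{rem:relation} hold for the countable family of simplices with vertices in $\bigcup_n D_n$ and their subdivisions into such.

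\textbf{From the net to all simplices.} For $\sigma\in\mathfrak{X}^k(\mathfrak{K})$ with $\diam(\sigma)\le 1$ choose $N$ with $2^{-N}\le\diam(\sigma)<2\cdot 2^{-N}$. For $n\ge N$ decompose $\pi_{n+1}\sigma-\pi_n\sigma$ by moving vertices one at a time as in the proof of Lemma~\ref{lem:distance between simplices}, each single‑vertex move being rewritten by Lemma~\ref{lem:distance_simplices_only one varying vertex} as $\partial Z$ plus $O(1)$ ``thin'' $k$‑simplices, with $Z$ a ``thin'' $(k+1)$‑simplex; here ``thin'' means that one boundary face has a vertex displaced by only $\lesssim 2^{-n}$, so $h_{\pmb\sigma}$ in \eqref{def:mass} is $\lesssim 2^{-n}$ and hence the thin $k$‑simplices have $\mass^k_{\tilde\alpha}\lesssim\diam(\sigma)^{k-1}2^{-n\tilde\alpha}$, while $Z$ has $\mass^{k+1}_{\tilde\beta}\lesssim\diam(\sigma)^{k}2^{-n\tilde\beta}$, all of them $n$‑ or $(n+1)$‑admissible. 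Using $K$ this gives $|A(\pi_{n+1}\sigma)-A(\pi_n\sigma)|\lesssim K\,2^{\varepsilon n}\diam(\sigma)^{k-1}2^{-n\tilde\alpha}+K\,2^{\varepsilon' n}\diam(\sigma)^{k}2^{-n\tilde\beta}$, summable over $n\ge N$ because $\varepsilon<\tilde\alpha$ and $\varepsilon'<\tilde\beta$ (possible since a nonempty parameter range forces $q>d$, so $\tilde\alpha-d(k+1)/q$ and $\tilde\beta-d(k+2)/q$ are positive and $\delta$ can be taken below them). Summing, together with the crude bound $|A(\pi_N\sigma)|\le K2^{\varepsilon N}\mass^k_{\tilde\alpha}(\pi_N\sigma)$, yields $|\hat A(\sigma)|\lesssim K\,(\diam(\sigma)^{k-1+\tilde\alpha-\varepsilon}+\diam(\sigma)^{k+\tilde\beta-\varepsilon'})\lesssim K\,\diam(\sigma)^{k-1+\alpha}$ for $\delta$ small enough, hence $\|\hat A\|_{\alpha;\mathfrak{K}}\lesssim K$ by Lemma~\ref{lem:equivalent norms}. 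The same telescoping on a $(k+1)$‑simplex $\omega$, now noting that $\partial(\pi_{n+1}\omega-\pi_n\omega)$ is, by $\partial\circ\partial=0$, $\partial$ of $O(1)$ thin $(k+1)$‑simplices, gives $|\hat A(\partial\omega)|\lesssim K\diam(\omega)^{k+\tilde\beta-\varepsilon'}\lesssim K\diam(\omega)^{k+\beta}$ and hence $\|\partial\hat A\|_{\beta;\mathfrak{K}}\lesssim K$; combining, $\EE[\|\hat A\|_{\alpha,\beta;\mathfrak{K}}^q]\lesssim M_q$, which is \eqref{eq:moment_bound}. That $\hat A$ is a version of $A$ follows from $\EE[|A(\pi_n\sigma)-A(\sigma)|]\to 0$, proved by the same thin‑simplex decomposition (with non‑dyadic vertices) and the first moments in \eqref{eq:kolmogorov_assumption}; that $\hat A\in\Omega^k$ follows because on the good event it restricts to $A$ on the countable collection of simplices with vertices in $\bigcup_n D_n$ (on which $A$ is additive and antisymmetric), while for a general subdivision $\mathcal{K}\,|\,\sigma$ the defect $A(\pi_n\sigma)-\sum_{\tau\in\mathcal{K}}A(\pi_n\tau)$ equals $A\circ\partial$ of $O(|\mathcal{K}|)$ thin $(k+1)$‑simplices with vertices in $D_n$, of mass $O(2^{-n\tilde\beta})$, hence tends to $0$.

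\textbf{Main obstacle.} The delicate step is the bookkeeping in the last paragraph: one must balance the combinatorial count of $n$‑admissible $j$‑simplices (growing like $2^{nd(j+1)}$) against the decay $2^{-n\tilde\gamma}$ of the masses of the thin simplices produced by the telescoping, and check that the weights $\varepsilon,\varepsilon'$ forced by the union bound remain strictly below $\tilde\alpha$ resp. $\tilde\beta$ — this is precisely what pins down $d(k+1)/q$ and $d(k+2)/q$, and is where the estimate would fail if the counting of \emph{simplices} (as opposed to cubes) were handled carelessly. The remaining technical nuisances — keeping $\pi_n$ inside $\mathfrak{K}$, and upgrading the almost‑sure additivity of $\hat A$ on the countable dyadic family to additivity on all of $\mathcal{X}^k$ — are routine but need some care.
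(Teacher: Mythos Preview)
Your approach is essentially the paper's: define $\hat A(\sigma)=\lim_n A(\pi_n\sigma)$ via dyadic vertex projections, control the telescoping increments $A(\pi_{n}\sigma)-A(\pi_{n-1}\sigma)$ using the thin--simplex decomposition of Lemmas~\ref{lem:distance_simplices_only one varying vertex}--\ref{lem:distance between simplices} (with the key input $\partial\circ\partial=0$ for the $\beta$--part), and balance the simplex count $\lesssim 2^{nd(j+1)}$ against the moment decay; the only difference is cosmetic --- you package everything via a pathwise random constant $K$ with $\EE[K^q]\lesssim M_q$, whereas the paper computes the expectation of the dyadic sum directly.

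One small wrinkle: your additivity step asserts that $\pi_n\sigma-\sum_{\tau\in\mathcal K}\pi_n\tau$ equals $\partial$ of $O(|\mathcal K|)$ thin $(k{+}1)$--simplices with vertices in $D_n$, but this chain is in general \emph{not} a cycle --- $\pi_n$ moves the interior vertices of $\mathcal K$ off the faces of $\pi_n\sigma$, so the boundary does not cancel. The easy fix is to first observe that your telescoping already yields Hölder continuity of $\hat A$ in the vertices (choose $N$ with $2^{-N}\sim\mu$ and split $\hat A(\sigma')-\hat A(\sigma)$ through $A(\pi_N\sigma')$, $A(\pi_N\sigma)$), and then pass the additivity relation from a countable dense family of $(\sigma,\mathcal K)$ by continuity. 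The paper is equally cavalier on this point, simply asserting that ``one can extend $\hat A$ to $\mathcal X^k(\mathfrak K)$''.
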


As is standard, outside of the proof of Proposition~\ref{prop:kolmogorov} we do not distinguish between $A$ and its modification in our notation, and just write $A$ instead of $\hat{A}$.

\begin{remark}\label{rem:kolmogorov_alt}
Note that the conclusion of Proposition~\ref{prop:kolmogorov} remains true when \eqref{eq:kolmogorov_assumption} is replaced by 
\begin{equ}\label{eq:kolmogorov_assumption2}
M^{\diam}_{q} = \sup_{\osimplex\in \mathfrak{X}_{\le 1}^k(\mathfrak{K}) } \frac{ \EE[|A(\osimplex)|^q]}{ \diam(\sigma)^{q(k-1+\tilde{\alpha})} } +\sup_{\osimplex\in \mathfrak{X}_{\le 1}^{k+1}(\mathfrak{K})} \frac{\EE[|A(\partial\osimplex)|^q]}{ \diam(\sigma)^{q(k+\tilde{\beta})} } < +\infty\ .
\end{equ}
Additionally, the statement also holds if, for $j \in \{0,1\}$, one replaces the suprema over $\mathfrak{X}_{\le 1}^{k+j}(\mathfrak{K})$ in \eqref{eq:kolmogorov_assumption2} with suprema over $\mathfrak{Q}_{\le 1}^{k+j}(\mathfrak{K})$.
\end{remark}
\begin{proof}
We write $(\tilde{\alpha}_{k}, \tilde{\alpha}_{k+1}) = (\tilde{\alpha}, \tilde{\beta})$, $(\alpha_k,\alpha_{k+1}) = (\alpha, \beta)$.
For  $n \in \mathbb{N}$ and $j \in \{k,k+1\}$ we write 
\[
\mathfrak{Y}^{j}_{n}
=
\big\{ \sigma \in \mathfrak{X}_{\le 1}^j(\mathfrak{K}): \mass_{\alpha_j}^{j}(\sigma) \in (2^{-(n-1)\alpha_j},2^{-n \alpha_j}] \big\}\;.
\]
Note that $( \mathfrak{Y}^{j}_{n}: n \in \mathbb{N})$ is a partition of $\mathfrak{X}_{\le 1}^j(\mathfrak{K})$.
For any $\sigma \in  \mathfrak{X}_{\le 1}^{j}(\mathfrak{K})$, we define $n_{\sigma}$ so that $\sigma \in \mathfrak{Y}^{j}_{n_\sigma}$. 
We shall show that
\begin{equs}\label{eq:continuity estimate}
\EE
\Big[ 
\sum_{m=0}^{\infty} 2^{mq \alpha_k }
\sup_{\sigma \in \mathfrak{Y}^{k}_{m}} 
\Big(
\big|A(\pi_{m}\sigma)|^{q} + \sum_{n>m} |A(\pi_{n}\sigma)-A(\pi_{n-1}\sigma)|^{q}
\Big)
\Big] &< \infty\ ,
\end{equs}
where the maps $\{\pi_n\}_{n\in \mathbb{N}}$ were introduced at the end of Section~\ref{sec:distributional}.
This will imply that, with probability $1$, for any $\sigma \in \mathfrak{X}_{\le 1}^{k}(\mathfrak{K})$, the limit 
$
\hat{A}(\sigma) := 
\lim_{n \rightarrow \infty} 
A(\pi_{n}\sigma)\
$
exists
and one can extend $\hat{A}$ to $\mathcal{X}^k(\mathfrak{K})$.
Since measurability is preserved in this limit, we have $\hat{A} \in \Omega^{k}$. 
In particular, for $\sigma \in \mathfrak{X}^{k+1}(\mathfrak{K})$, $\lim_{n \rightarrow \infty} A(\pi_{n}\partial\sigma) = \hat{A}(\partial \sigma )$, which together with showing 
\begin{equ}\label{eq:continuity estimate2}
\EE
\Big[ 
\sum_{m=0}^{\infty} 2^{mq \alpha_{k+1} }
\sup_{\sigma \in \mathfrak{Y}^{k+1}_{m}} 
\Big(
\big|A(\pi_{m} \partial \sigma)|^{q} + \sum_{n>m} |A(\pi_{n} \partial \sigma)-A(\pi_{n-1} \partial \sigma)|^{q}
\Big)
\big] < \infty\;,
\end{equ}
will prove that $\hat{A}$ satisfies the estimate \eqref{eq:moment_bound}. 

We use the parameter $j \in \{k,k+1\}$ so we can write discuss estimates \eqref{eq:continuity estimate} and \eqref{eq:continuity estimate2} at the same time. 
We claim that, uniformly in $m$, $\sigma \in \mathfrak{Y}^{j}_{m}$ and $n \ge m$, we have 
\begin{equs}\label{equ:single_term_estimate}
\EE [ |A(\pi_{m}\sigma_j)|^{q} ] 
&\lesssim 
M_{q}
2^{-q m \hat{\alpha}_j} 
\quad
\text{and}
\quad
\EE
[
|A(\pi_{n}\sigma_j)-A(\pi_{n-1}\sigma_j)|^{q}]
&\lesssim 
M_{q}
 2^{-nq \hat{\alpha}_j}   \;,
\end{equs}
where, $\sigma_k = \sigma$, $\hat{\alpha}_{k} =  \tilde{\alpha} \wedge \tilde{\beta}$, and $\sigma_{k+1} = \partial \sigma$, $\hat{\alpha}_{k+1} = \tilde{\beta}$. 

{We turn to showing inequality \eqref{equ:single_term_estimate}.}
For any $\sigma  \in \mathfrak{X}_{\le 1}^{j}$ and $n,n' \in \mathbb{Z}$, there exists, {by Corollary~\ref{cor:dyadic_diff_est}}, a $Z^j = Z^j(\sigma,n,n') \in \mathcal{X}^{j+1}(\mathfrak{K})$ such that, uniform in such $\sigma,n,n'$, 
\[
\Big| 
 \pi_{n}(\sigma) - \pi_{n'}(\sigma) -  \partial Z^j 
 \Big|_{\tilde{\alpha}_j;j}
\lesssim 
2^{-(n \wedge n') \tilde{\alpha}_j}\;,
\]
and, for $j=k$, we also have 
\[
\Big| Z^k \Big|_{\tilde{\beta};{k+1}} \lesssim 2^{-(n \wedge n') \tilde{\beta}}\;.
\]
Continuing in the case $j=k$, we have
\begin{equs}\label{moment_estimates}
\EE\big[
\big|A ( 
\pi_{n}(\sigma) - \pi_{n'}(\sigma)  
)
\big|^{q} \big] 
&\lesssim 
\EE\big[ \big|A ( \pi_{n}(\sigma) - \pi_{n'}(\sigma) -  \partial  Z^{k})\big|^{q}]
+
\EE\big[ \big|A (\partial Z^{k})\big|^{q} \big]\\
& \lesssim M_{q} \Big( \big|
 \pi_{n}(\sigma) - \pi_{n'}(\sigma) -  \partial Z^{k} \big|_{\tilde{\alpha};{k}}^q + \big|Z^k \big|^{q}_{\tilde{\beta};{k+1} } \Big)\\
&\lesssim 
M_{q} \big( 2^{-\tilde{\alpha}(n \wedge n' )q} +  2^{-\tilde{\beta}(n \wedge n' )q} \big) = M_{q}
 2^{- (n \wedge n' ) q \hat{\alpha}_k}  \;.
\end{equs}

For $j=k+1$, since $\partial \partial Z^{k+1} = 0$, we have
\begin{equs}
{}&\EE\big[ 
\big|
A
\big( \partial \pi_{n}(\sigma) - \partial \pi_{n'}(\sigma)\big)\big|^{q}
\big]
=
\EE\big[
\Big|
A
\Big( \partial \big(\pi_{n}(\sigma) -  \pi_{n'}(\sigma) - \partial Z^{k+1}  \big)\Big)
\Big|^{q}
\big]\\
{}&\lesssim 
M_{q} \Big|  \pi_{n}(\sigma) -  \pi_{n'}(\sigma) - \partial Z^{k+1} \Big|_{\tilde{\beta};k+1}^q
\lesssim 
M_{q} 2^{-\tilde{\beta}(n \wedge n')q} = M_{q} 2^{-\hat{\alpha}_{k} (n \wedge n')q} \;.
\end{equs}

Now that \eqref{equ:single_term_estimate} is proved, let $\mathfrak{Y}^{j}_{m,n} = \big\{\pi_{n} \sigma: \sigma \in \mathfrak{Y}^{j}_{m} \big\}$.
Observe that  
\[
|\mathfrak{Y}^{j}_{m,n}| \le \diam(\mathfrak{K})^{(j+1)d} 2^{(j+1)dn}]\;,
\] 
since each such simplex is determined by $j+1$ vertices in $\mathfrak{K} \cap (2^{-n} \mathbb{Z}^{d})$. 
We then have 
\begin{equs}
 \EE &
\Big[ 
\sum_{m = 0}^{\infty} 2^{mq \alpha_j}
\sup_{\sigma \in \mathfrak{Y}^{j}_{m}} 
\Big(
\big|A(\pi_{m}\sigma_j)|^{q} + \sum_{n>m} |A(\pi_{n}\sigma_j)-A(\pi_{n-1}\sigma_j)|^{q}
\Big)
\Big]\\
{}& \lesssim
 \EE
\Big[ 
\sum_{m = 0}^{\infty} 2^{mq \alpha_j}
\Big(
\sum_{\tilde{\sigma} \in \mathfrak{Y}^{k}_{m,m}}
\big|A(\tilde{\sigma}_j)|^{q} + \sum_{n>m} \sum_{\tilde{\sigma} \in \mathfrak{Y}^{k}_{m,n}}
|A(\tilde{\sigma}_j)-A(\pi_{n-1} \tilde{\sigma}_j)|^{q}
\Big)
\Big]\\
{}&\lesssim 
M_{q}
\sum_{m = 0}^{\infty}
2^{mq \alpha_j}
\Big(
 2^{-q m\hat{\alpha}_{j} } 2^{(j+1)dm}
+
 \sum_{n>m} 2^{-nq \hat{\alpha}_j } 2^{(j+1)dn} 
 \Big) 
 \\
{}& \lesssim M_{q} \sum_{m = 0}^{\infty}  2^{q m ( \alpha_j -  \hat{\alpha}_j) + (j+1)dm} \lesssim M_{q}\;,
\end{equs}
where in  the third inequality we used that $\tilde{\alpha}_j  > (j+1)d/q$ and in the final inequality we used that $\alpha_j < \hat{\alpha}_j - d(j+1)/q$. 
\end{proof}

\subsection{Rough Gaussian $k$-forms}
In this section we exhibit a criterion for fractional Gaussian fields, cf. \cite{LSSW16}, to belong to the spaces $\Omega_{\alpha,\beta}^k$.
We start with the following generalisation of \cite[Lemma~4.9]{CCHS22}. 
\begin{lemma}\label{lemma:cube_estimate}
Let $\pmb{Q} \subset \mathbb{R}^{d}$ be a $k$-cube of side length $r \in (0,1]$. 
We associate to $\pmb{Q}$ the distribution $\delta_{\pmb{Q}}$ on $\mathbb{R}^{d}$ by setting, for any smooth $f$ on $\mathbb{R}^{d}$,  
\[
\langle \delta_{\pmb{Q}},f\rangle = \int_{[0,r]^{k}} f(\gamma(x)) \mathrm{d}^{k}x\;,
\]
where $\gamma: [0,r]^{k} \rightarrow  \pmb{Q}$ is an isometric embedding and $\mathrm{d}^{k}x$ the Lebesgue measure. There exists $C>0$ independent of $r$ such that
$
\| \delta_{\pmb{Q}} \|_{H^{-\theta}(\mathbb{R}^{d})} 
\leq C r^{ (k + (2 \theta - d + k) \wedge k)/2}
$
for any $\theta > (d-k)/2$.
\end{lemma}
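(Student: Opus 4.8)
The plan is to estimate $\|\delta_{\pmb{Q}}\|_{H^{-\theta}}$ by testing against an arbitrary $g \in H^{\theta}(\mathbb{R}^{d})$ and bounding $\langle \delta_{\pmb{Q}}, g\rangle$ in terms of $\|g\|_{H^{\theta}}$ and $r$. By translation and rotation invariance of the $H^{-\theta}$ norm we may assume $\pmb{Q} = [0,r]^{k} \times \{0\}^{d-k}$, so that $\langle \delta_{\pmb{Q}}, g \rangle = \int_{[0,r]^{k}} g(x,0)\, \mathrm{d}^{k}x$, where we write points of $\mathbb{R}^{d}$ as $(x,y)$ with $x \in \mathbb{R}^{k}$, $y \in \mathbb{R}^{d-k}$. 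The first key step is to pass to Fourier space: writing $\hat{g}(\xi,\zeta)$ for the Fourier transform with $\xi$ dual to $x$ and $\zeta$ dual to $y$, one has
\[
\langle \delta_{\pmb{Q}}, g\rangle = \int_{\mathbb{R}^{k}} \widehat{\mathbf{1}_{[0,r]^{k}}}(\xi) \Big( \int_{\mathbb{R}^{d-k}} \hat{g}(\xi, \zeta)\, \mathrm{d}\zeta \Big) \mathrm{d}\xi\;,
\]
up to constants depending only on the Fourier normalization.

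The second step is to control the two factors. For the cube indicator one has the elementary bound $|\widehat{\mathbf{1}_{[0,r]^{k}}}(\xi)| \lesssim \prod_{i=1}^{k} \big( r \wedge |\xi_{i}|^{-1}\big) \lesssim r^{k} \wedge (r^{k-1}|\xi|^{-1})$, and more usefully $|\widehat{\mathbf{1}_{[0,r]^{k}}}(\xi)| \lesssim r^{k}(1 + r|\xi|)^{-1}$ after summing the one-dimensional estimates appropriately; the precise form to keep is $\int_{\mathbb{R}^{k}} |\widehat{\mathbf{1}_{[0,r]^{k}}}(\xi)|^{2} (1+|\xi|^{2})^{s}\, \mathrm{d}\xi \lesssim r^{2k - k - 2s} = r^{k-2s}$ for $s < k/2$ (this is just $\|\mathbf{1}_{[0,r]^{k}}\|_{H^{s}(\mathbb{R}^{k})}^{2}$, finite precisely because the indicator is in $H^{s}$ for $s<1/2$ in each variable, hence for $s<k/2$ jointly — care is needed here, see below). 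For the inner integral over $\zeta$, Cauchy–Schwarz in $\zeta$ against the weight $(1+|\xi|^{2}+|\zeta|^{2})^{-\theta}$ gives
\[
\Big| \int_{\mathbb{R}^{d-k}} \hat{g}(\xi,\zeta)\, \mathrm{d}\zeta \Big|^{2} \leq \Big( \int_{\mathbb{R}^{d-k}} |\hat{g}(\xi,\zeta)|^{2} (1+|\xi|^{2}+|\zeta|^{2})^{\theta}\, \mathrm{d}\zeta \Big) \Big( \int_{\mathbb{R}^{d-k}} (1+|\xi|^{2}+|\zeta|^{2})^{-\theta}\, \mathrm{d}\zeta \Big)\;,
\]
and the second factor, which converges since $\theta > (d-k)/2$, is $\lesssim (1+|\xi|^{2})^{-\theta + (d-k)/2}$.

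The third step is to combine: applying Cauchy–Schwarz once more in $\xi$ between $|\widehat{\mathbf{1}_{[0,r]^{k}}}(\xi)|(1+|\xi|^{2})^{(-\theta+(d-k)/2)/2}$ and the square-root of the $g$-dependent integrand times $(1+|\xi|^{2})^{(\theta - (d-k)/2)/2}$, one gets
\[
|\langle \delta_{\pmb{Q}}, g\rangle| \lesssim \|g\|_{H^{\theta}(\mathbb{R}^{d})} \Big( \int_{\mathbb{R}^{k}} |\widehat{\mathbf{1}_{[0,r]^{k}}}(\xi)|^{2} (1+|\xi|^{2})^{-\theta + (d-k)/2}\, \mathrm{d}\xi \Big)^{1/2}\;,
\]
so it remains to bound the last integral by $r^{k + (2\theta - d + k)\wedge k}$. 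Set $s = \theta - (d-k)/2 > 0$. If $s < k/2$, the integral is $\lesssim \|\mathbf{1}_{[0,r]^{k}}\|_{H^{s}(\mathbb{R}^{k})}^{2} \lesssim r^{k - 2s} = r^{2d - 2\theta - k}$, giving the exponent $\tfrac{1}{2}(k + (2\theta - d + k))$ in the regime $2\theta - d + k < k$, i.e. $\theta < d/2$, which matches $(2\theta-d+k)\wedge k$ there. If $s \geq k/2$, one splits $\int = \int_{|\xi|\le 1/r} + \int_{|\xi| > 1/r}$; on the first region use $|\widehat{\mathbf{1}_{[0,r]^{k}}}| \lesssim r^{k}$ to get $\lesssim r^{2k} \cdot r^{-k} = r^{k}$ (the weight is bounded since $s > 0$, and the volume of $\{|\xi|\le 1/r\}$ is $r^{-k}$), and on the second region use the decay $|\widehat{\mathbf{1}_{[0,r]^{k}}}(\xi)|^2 \lesssim r^{2k}(r|\xi|)^{-2}$ together with $s \ge k/2$ so that $\int_{|\xi|>1/r}|\xi|^{-2 - 2s}\,r^{2k-2}\,\mathrm{d}\xi \lesssim r^{2k-2} r^{2+2s-k} = r^{k + 2s} \le r^{k}$ wait — one needs to be careful that this contributes at the right order; in fact in this regime the dominant term is $r^{k}$ and the exponent $\tfrac{1}{2}(k + k) = k$ matches $\tfrac12(k + (2\theta-d+k)\wedge k)$ since $(2\theta - d+k)\wedge k = k$. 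Taking the supremum over $\|g\|_{H^\theta}\le 1$ and square-rooting gives the claimed bound.

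\textbf{Main obstacle.} The delicate point is the sharp behaviour of $\|\mathbf{1}_{[0,r]^{k}}\|_{H^{s}(\mathbb{R}^{k})}$ and in particular the transition at $s = k/2$ (equivalently $\theta = d/2$), where the indicator just fails to lie in $H^{k/2}$; this is exactly where the $(2\theta - d + k)\wedge k$ truncation appears, and the estimate must be organized (via the dyadic frequency splitting above, or equivalently a direct computation of $\int |\widehat{\mathbf{1}_{[0,r]}}(\xi)|^2 (1+\xi^2)^s d\xi$ in one dimension followed by a tensorization argument) so that the scaling in $r$ comes out uniformly. A clean way to handle the tensorial structure is to note $\widehat{\mathbf{1}_{[0,r]^{k}}}(\xi) = \prod_{i=1}^{k}\widehat{\mathbf{1}_{[0,r]}}(\xi_i)$ and reduce everything to one-dimensional integrals with the weight split as $(1+|\xi|^2)^{-s'} \lesssim \prod_i (1+\xi_i^2)^{-s'/k}$ up to harmless constants, which is legitimate for the upper bound; I expect the bookkeeping of the exponents through this reduction, rather than any conceptual difficulty, to be the part requiring the most care.
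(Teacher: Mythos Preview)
Your reduction is correct and lands on exactly the same integral as the paper: after placing the cube at $[0,r]^k\times\{0\}^{d-k}$ and integrating out the $d-k$ transverse Fourier variables, both approaches come down to estimating
\[
I \;=\; \int_{\mathbb{R}^k} |\widehat{\mathbf{1}_{[0,r]^k}}(\xi)|^2\,(1+|\xi|^2)^{-s}\,\mathrm{d}\xi,\qquad s=\theta-\tfrac{d-k}{2}>0\;.
\]
The paper does this directly from the Fourier definition of $\|\delta_{\pmb Q}\|_{H^{-\theta}}^2$; your duality-plus-Cauchy--Schwarz is the same computation unwound.

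However, your evaluation of $I$ has slips that matter. First, the weight in $I$ has exponent $-s$, so $I=\|\mathbf{1}_{[0,r]^k}\|_{H^{-s}}^2$, not $\|\mathbf{1}_{[0,r]^k}\|_{H^{s}}^2$; your worry about whether the indicator lies in $H^s$ for $s<k/2$ is therefore a red herring (and in any case the threshold for $\mathbf{1}_{[0,r]^k}\in H^s(\mathbb{R}^k)$ is $s<1/2$, not $s<k/2$). Second, in the regime $s<k/2$ the correct scaling is $I\lesssim r^{k+2s}$, not $r^{k-2s}$; you state the wrong intermediate exponent and then jump to the right final one. Third, in the regime $s\ge k/2$ your near-region bound $\int_{|\xi|\le 1/r}r^{2k}\,\mathrm{d}\xi\lesssim r^k$ is too crude: the target is $I\lesssim r^{2k}$ (the \emph{squared} norm), and you get it by using that $\int_{\mathbb{R}^k}(1+|\xi|^2)^{-s}\,\mathrm{d}\xi<\infty$ for $s>k/2$ rather than bounding the weight by $1$. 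Your matching ``$r^k$ gives exponent $\tfrac12(k+k)=k$'' conflates the norm with its square.

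Your closing tensorization idea is actually the right fix and is essentially what the paper does: write $|\widehat{\mathbf{1}_{[0,r]^k}}(\xi)|^2\lesssim\prod_{j=1}^k(r^2\wedge \xi_j^{-2})$, use $(1+|\xi|^2)^{-s}\le\prod_j(1+\xi_j^2)^{-s/k}$ (this inequality does hold, since $1+|\xi|^2\ge(\prod_j(1+\xi_j^2))^{1/k}$), and reduce to the one-dimensional integral $\int(r^2\wedge t^{-2})(1+t^2)^{-s/k}\,\mathrm{d}t\lesssim r^{1+(2s/k)\wedge 1}$, whose $k$-th power gives $r^{k+(2s)\wedge k}$. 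The paper organizes this slightly differently, splitting the $\mathbb{R}^k$-integral into the $2^k$ regions according to which coordinates satisfy $|\xi_j|\gtrless r^{-1}$ and checking that each region contributes $\lesssim r^{k+(2\theta-d+k)\wedge k}$; both routes exploit the product structure and avoid the isotropic approximation that caused your losses above.
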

\begin{proof}
Without loss of generality, we take $\pmb{Q} = [0,r]^{k} \times \{0\}^{d-k} \subset \mathbb{R}^{d}$. 
We then have
\[
\widehat{\delta_{\pmb{Q}}}(p_1,\dots,p_{n})
=
\prod_{j=1}^{k} \frac{e^{2\pi i p_j r} - 1}{2 \pi i p_{j}}\;.
\]
Thus,
\begin{equs}
\| \delta_{\pmb{Q}} \|_{H^{-\theta}(\mathbb{R}^{d})}^2 
&=
\int_{\mathbb{R}^{d}} \big(1 + \sum_{j=1}^{d} |p_{j}| \big)^{-2 \theta} \big| \widehat{\delta_{\pmb{Q}}}(p) \big|^2 \; \mathrm{d}^{d}p
\lesssim
\int_{\mathbb{R}^{d}} \big(1 + \sum_{j=1}^{d} |p_{j}| \big)^{-2 \theta} \prod_{j=1}^{k}( r^2 \wedge p_{j}^{-2})\;  \mathrm{d}^{d}p \\
&
\lesssim
\int_{\mathbb{R}^{k}} \big(1 + \sum_{j=1}^{k} |p_{j}| \big)^{(d-k)-2 \theta} \prod_{j=1}^{k}( r^2 \wedge p_{j}^{-2})\;  \mathrm{d}^{k}p
\;.
\end{equs}
The estimate then follows from the observation that for any $0 \le m \le k$, 
\begin{equs}
\int_{\mathbb{R}^{k}} \big(1 + \sum_{j=1}^{k} |p_{j}| \big)^{(d-k)-2 \theta} \prod_{j=1}^{m} \big( p_{j}^{-2} \mathbf{1} \big\{ |p_{j}| \ge r^{-1} \big\} \big)
\prod_{j=m+1}^{k} \big( r^{2}  \mathbf{1} \big\{ |p_{j}| < r^{-1} \big\}  \big) \mathrm{d}^{k}p
\lesssim r^{k +  (2\theta - d + k) \wedge k} \;.
\end{equs}
\end{proof}

\begin{prop}\label{prop:gaussian-kform}
Fix $k < d$, $(d-k)/2 < \theta $.
Let $(A_{I}: I\in \mathfrak{C}^d_{k})$ be a collection of centred, jointly Gaussian, random fields on $H^{-\theta}(\mathbb{R}^{d})$ for which there exists $C>0$ such that
\[
\EE[ A_{I}(f)^2] \le C \|f\|_{H^{ - \theta}}^2\qquad \text{uniformly over $I$ and }f \in H^{-\theta}(\mathbb{R}^{d})\;.
\] 
Let $\bar{\alpha}= (\theta-d/2+1)\wedge 1$ and $\bar \beta = (\theta-d/2) \wedge 1$. Then, there exists
$A\in \Omega^{k}_{\alpha,\beta}(\mathfrak{K}) $ 
such that $\pi(A)$ is a modification of $(A_{I}: I\in \mathfrak{C}^d_{k})$ and
$
\EE \big[ \llbracket A\rrbracket_{(\alpha,\beta); \mathfrak{K}}^2 \big]
\lesssim_{\mathfrak{K}} C\;
$
for every $\alpha\in (0,\bar{\alpha}]$, $\beta\in (0, \bar{\beta}]$ and compact $\mathfrak{K} \subset \mathbb{R}^{d}$.
%
%
\end{prop}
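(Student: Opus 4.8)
The plan is to realise $A$ as a Gaussian field valued in integration currents and then to run a Kolmogorov-type chaining argument, with Lemma~\ref{lemma:cube_estimate} as the basic analytic input. For an oriented $k$-simplex $\sigma=[v_0,\dots,v_k]\subset\mathfrak K$ and $I\in\mathfrak C^d_k$ I would define the directed integration current $\delta^I_\sigma$ by $\langle\delta^I_\sigma,\psi\rangle:=\int_\sigma\psi\,dx^I$ for smooth $\psi$, i.e. the $k$-dimensional Hausdorff measure on $\pmb\sigma$ weighted by the (constant) pairing of $dx^I$ against the oriented $k$-plane of $\sigma$. Decomposing $\pmb\sigma$ into its Whitney $k$-cubes (Lemma~\ref{lem:covering}), applying Lemma~\ref{lemma:cube_estimate} on each cube and summing, one gets $\delta^I_\sigma\in H^{-\theta}(\mathbb R^d)$ with $\|\delta^I_\sigma\|_{H^{-\theta}}\lesssim\mass^k_{\bar\alpha}(\sigma)$: the geometric series converges precisely because the cube exponent $\tfrac12\big(k+(2\theta-d+k)\wedge k\big)$ equals $k-1+\bar\alpha$, which is $>k-1$ exactly when $\bar\alpha>0$ (and when $\bar\alpha\le 0$ the claimed ranges of $\alpha,\beta$ are empty), while to get the sharp $\mass^k_{\bar\alpha}$-bound rather than $\diam(\sigma)^{k-1}h_{\pmb\sigma}^{\bar\alpha}$ one first cuts $\pmb\sigma$ into $\lesssim\Vol^{k-1}(F)h_{\pmb\sigma}^{-(k-1)}$ roughly equilateral pieces of size $h_{\pmb\sigma}$, as in the proof of Lemma~\ref{lem:diam_mass characterisation}. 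Since $\delta^I$ is odd under reorientation and additive under subdivision, $A(\sigma):=\sum_{I\in\mathfrak C^d_k}A_I(\delta^I_\sigma)$ defines, after passing to a jointly measurable version, a random element of $\Omega^k$; and a stochastic Fubini computation against the kernel in \eqref{eq:embedding}, using that $\delta^I_{\llbracket w,w+v\rrbracket}=0$ for $I\neq J$ when $w\in E^{J^c}$, $v\in E^J$, identifies $\pi_J(A)$ with $A_J$ almost surely.

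\textbf{Step 2 (moment bounds).} From Step 1 and the hypothesis $\EE[A_I(f)^2]\le C\|f\|^2_{H^{-\theta}}$, Cauchy--Schwarz gives $\EE[A(\sigma)^2]\lesssim C\,\mass^k_{\bar\alpha}(\sigma)^2$. For the boundary, Stokes' theorem yields $\delta^I_{\partial\omega}=\pm\,\partial_j\delta^{I\cup\{j\}}_\omega$ for an oriented $(k+1)$-simplex $\omega$, with $j$ the index missing from $I$, so Step 1 applied at regularity $\theta-1$ gives $\|\delta^I_{\partial\omega}\|_{H^{-\theta}}\le\|\delta^{I\cup\{j\}}_\omega\|_{H^{-(\theta-1)}}\lesssim\mass^{k+1}_{\bar\beta}(\omega)$, using that $\big((\theta-1)-d/2+1\big)\wedge 1=\bar\beta$ and $\theta-1>(d-k-1)/2$ whenever $\bar\beta>0$; hence $\EE[A(\partial\omega)^2]\lesssim C\,\mass^{k+1}_{\bar\beta}(\omega)^2$. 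Gaussianity then promotes both to $\EE[|A(\sigma)|^q]\lesssim_q C^{q/2}\mass^k_{\bar\alpha}(\sigma)^q$ and $\EE[|A(\partial\omega)|^q]\lesssim_q C^{q/2}\mass^{k+1}_{\bar\beta}(\omega)^q$ for every $q$, which are exactly the hypotheses of Proposition~\ref{prop:kolmogorov} with $(\tilde\alpha,\tilde\beta)=(\bar\alpha,\bar\beta)$.

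\textbf{Step 3 (chaining and the endpoint).} Feeding Step 2 into Proposition~\ref{prop:kolmogorov} (in the cube form of Remark~\ref{rem:kolmogorov_alt}) and letting $q\to\infty$ already yields $\EE[\|A\|^2_{(\alpha,\beta);\mathfrak K}]\lesssim_{\mathfrak K}C$ for $\alpha<\bar\alpha\wedge\bar\beta$ and $\beta<\bar\beta$. To reach $\alpha$ up to $\bar\alpha$ I would re-run the chaining estimates \eqref{eq:continuity estimate}--\eqref{moment_estimates} directly for this $A$: the dyadic increment $A(\pi_m\sigma)-A(\pi_n\sigma)=\sum_I A_I\big(\delta^I_{\pi_m\sigma}-\delta^I_{\pi_n\sigma}\big)$ is controlled not through the flat norm but through the bound $\|\delta^I_\sigma-\delta^I_{\sigma'}\|_{H^{-\theta}}\lesssim\diam(\sigma)^{k-1}\mu^{\bar\alpha}+\diam(\sigma)^{k}\mu^{\bar\beta}$, with $\mu$ the maximal vertex displacement, obtained by splitting $\delta^I_\sigma-\delta^I_{\sigma'}$ along the straight-line homotopy into the current of a thin lateral $k$-face (a box with one short side $\sim\mu$, estimated as in Step 1) plus $\partial$ of the $(k+1)$-current of the thin homotopy prism (estimated as in Step 2). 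In the telescoping reconstruction of $A(\sigma)$ the $\diam(\sigma)^{k}\mu^{\bar\beta}$-contributions sum to $\lesssim\diam(\sigma)^{k+\bar\beta}$, which is absorbed since $k+\bar\beta\ge k-1+\bar\alpha$ and $\diam(\sigma)\le 1$; invoking Lemma~\ref{lem:equivalent norms} to pass from $\diam$- to $\mass$-normalisation then gives the claim for $\alpha$ up to $\bar\alpha$, the critical case requiring the sharp, Gaussian, form of the chaining bound.

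\textbf{Main obstacle.} The heart of the proof is Step 1 together with the increment bounds of Step 3: promoting Lemma~\ref{lemma:cube_estimate} to sharp $H^{-\theta}$-estimates for the integration currents of arbitrarily thin, anisotropic simplices and for their boundaries, and then tracking exponents finely enough that the chaining controls $\|A\|_\alpha$ for $\alpha$ up to $\bar\alpha$ rather than the $\bar\alpha\wedge\bar\beta$ delivered by a black-box appeal to Proposition~\ref{prop:kolmogorov}. The boundary estimate of Step 2 — where passing to $\partial$ would naively cost a full derivative but the thinness of the relevant region recovers the loss — is the subtlest single ingredient.
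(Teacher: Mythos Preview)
Your approach is essentially the paper's: define $A(\sigma)$ via the integration currents $\delta^I_\sigma$, bound second moments through Lemma~\ref{lemma:cube_estimate}, lift to all $q$ by Gaussian hypercontractivity, and feed into the Kolmogorov criterion Proposition~\ref{prop:kolmogorov}. The boundary term is handled identically, via Stokes and a loss of one derivative in the Sobolev index.

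The paper is more direct in two places. First, instead of Whitney--decomposing simplices to reduce to cubes, it works from the outset on $k$- and $(k{+}1)$-cubes, invoking the cube form of the Kolmogorov criterion in Remark~\ref{rem:kolmogorov_alt}; since Lemma~\ref{lemma:cube_estimate} is already stated for cubes, this bypasses your Step~1 summation and the cutting-into-equilateral-pieces argument altogether. Second, the paper simply applies Proposition~\ref{prop:kolmogorov} as a black box and stops, without anything like your Step~3 refinement. You are right that the black box, even after $q\to\infty$, only yields $\alpha<\bar\alpha\wedge\bar\beta=\bar\beta$ and $\beta<\bar\beta$, not the stated ranges $\alpha\le\bar\alpha$, $\beta\le\bar\beta$; your re-running of the chaining with the sharper $H^{-\theta}$ increment bound is therefore addressing a genuine discrepancy between the statement and the paper's own argument. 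What you do there is reasonable, but it goes beyond what the paper actually proves.
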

\begin{proof}
By Proposition~\ref{prop:kolmogorov} and Remark~\ref{rem:kolmogorov_alt} with the choice $\bar{\alpha} = \theta/2 - d/2 + 1$ and $\bar{\beta} = \zeta$ and combined with equivalence of moments for Gaussian random variables, it suffices to prove the estimate
\begin{equ}\label{eq:example_goal}
 \sup_{ Q \in \mathfrak{Q}_{\le 1}^k(\mathfrak{K}) } \frac{ \EE[|A(Q)|^2]}{ \diam(Q)^{2(k-1 + \bar \alpha)} } +\sup_{Q \in \mathfrak{Q}_{\le 1}^{k+1}(\mathfrak{K})} \frac{\EE[|A(\partial Q)|^2]}{ \diam(Q)^{2(k +\bar \beta)} } 
 \lesssim C\;.
\end{equ}
For $Q \in \mathfrak{Q}^{k}(\mathfrak{K})$ of side length $r$, let
$
A(Q) = \frac{1}{r^k} \sum_{I} \mathrm{d}x^{I}(Q) \delta_{\pmb{Q}}(A_{I})\;.
$
To estimate the first term on the left-hand side of \eqref{eq:example_goal} we note that by Lemma~\ref{lemma:cube_estimate} we have, uniform in $Q \in  \mathfrak{Q}_{\le 1}^{k}(\mathfrak{K})$, 
\[
\EE[
\delta_{\pmb{Q}}(A_{I})^2]
\le C \| \delta_{\pmb{Q}} \|_{H^{-\theta}}^2
\lesssim C \diam(Q)^{k + (2 \theta - d + k)\wedge k }= C \diam(Q)^{2(k  + \theta - d/2) \wedge 2k}\;.
\]
We turn to estimating the second term of \eqref{eq:example_goal}. 
The desired estimate follows from applying Stokes' theorem estimate to write  $|A(\partial Q)| = |(\partial A)(Q)|$ and then observing that, for any $I$ and $j \in [d] \setminus I$, and uniform in  $Q \in  \mathfrak{Q}_{\le 1}^{k+1}(\mathfrak{K})$,  
\[
\mathbb{E}[\delta_{\pmb{Q}}(\partial_{j} A_{I})^2]
\lesssim C\| \partial_{j} \delta_{\pmb{Q}}\|_{H^{-\theta}}^2
\lesssim C\| \delta_{\pmb{Q}}\|_{H^{-\theta + 1}}^2
\lesssim C \diam(Q)^{k + 1 + (2 (\theta - 1) - d + k+1)\wedge(k+1) }   \;.
\]
\end{proof}

\section{Methods of subdivision}\label{sec:sewing}

We define the eccentricity $\mathfrak{e}(\sigma)$ of $\sigma \in \mathfrak{X}^k$ as
\begin{equ}\label{eq:eccentricity}
\mathfrak{e}(\sigma) 
:=
\frac{\diam(\sigma)^k}{\Vol^k(\sigma)}\;.
\end{equ}
\begin{definition}\label{def:reg_method}
We call a family of maps $\mathcal{M}= \{\mathcal{M}_{\ell}\}_{\ell\in \mathbb{N}}$ where for each $\ell \in \mathbb{N}$, 
$$\mathcal{M}_{\ell}: \mathfrak{X}^k \mapsto 2^{\mathfrak{X}^k}, $$ a method of subdivision (for $k$ simplices), if for each $\sigma \in \mathfrak{X}^k$, $\mathcal{M}_\ell(\sigma)$ is a subdivision of $\sigma$ and
\begin{equ}\label{eq:semigroup}
\mathcal{M}_{\ell+\ell'}(\sigma)= \bigcup_{w\in \mathcal{M}_{\ell'}(\sigma)} \mathcal{M}_{\ell}(w)\ , \qquad \text{for every }\ell,\ell'\in \NN\;. 
\end{equ}
We call $\mathrm{card}(\mathcal{M}) := \sup_{\sigma}|\mathcal{M}_1(\sigma)| $ the cardinality of $\mathcal{M}$ and write
\[
\| \mathcal{M} \|
:=
\sup_{\sigma \in \mathfrak{X}^k}
\sup_{\ell \in \mathbb{N}}
\sup_{w \in\mathcal{M}_\ell(\sigma) } 
\frac{\mathfrak{e}(w)}{\mathfrak{e}(\sigma)}
\quad
\text{and}
\quad
\mathfrak{c}_{\mathcal{M}}
:=
\sup_{\sigma \in \mathfrak{X}^k}
\max_{w \in\mathcal{M}_1(\sigma) } 
\Big( \frac{\diam(w)}{\diam(\sigma)} \Big)\; \] 
as well as 
\begin{equ}
\vertiii{ \mathcal{M}} := \inf_{\mu>0}\sup_{l\in \mathbb{N}} \ell^{-\mu}  \sup \Big\{ 
\frac{\Vol^k(w')}{\Vol^k(w)}: \enskip \sigma \in \mathfrak{X}^{k},\;w,w'\in \mcM_\ell(\sigma) \Big\} \;.
\end{equ}
We call $\mathcal{M}$ \textit{strongly regular} if  $\mathrm{card}(\mathcal{M}) \vee  \| \mathcal{M} \|\vee \vertiii{ \mathcal{M}} < \infty$ and  $\mathfrak{c}_{\mathcal{M}} < 1$.
\end{definition}
\begin{remark}
Enforcing $\| \mathcal{M} \| < \infty$ rules out methods of iterative subdivision that would be problematic for our analysis, such as slicing a triangle into thinner and thinner strips without significantly reducing the length of the strips.
\end{remark}
\begin{remark}
The main theorem of \cite{EG} states that there exists a strongly regular method of subdivision with $\vertiii{ \mathcal{M}}=1$\ .
\end{remark}

\begin{remark}
Another natural method of subdivision is longest-edge bisection, but the question of whether $\| \mathcal{M} \| < \infty$ for this method appears to be an open problem, see \cite{KOR}.
\end{remark}

\begin{remark}
For any strongly regular method of subdivision $\mathcal{M}$, $\sigma\in \mathfrak{X}^k$, and $n \in \mathbb{N}$, 
$$\sum_{\sigma'\in \mathcal{M}_n(\sigma) }  \diam(\sigma')^{k}  
\leq
\Big(
 \max_{{\sigma'\in \mathcal{M}_n(\sigma) }} \mathfrak{e}(\sigma') 
 \Big) \Vol^k(\sigma)
\leq \| \mcM\| \mathfrak{e}(\sigma)\Vol^k(\sigma)
 =  \| \mcM\| \diam(\sigma)^k\;.$$
In particular,  then sequence $\big(\mathcal{M}_{l}(\sigma) \big)_{l\in \mathbb{N}}$ is a regular sequence of subdivisions of $\sigma$ in the sense of Definition~\ref{def:subdivision,mesh} since, for $\gamma>k$, 
\begin{equs}
\sum_{\sigma'\in\mcM_n (\sigma) } \diam(\sigma')^{\gamma} &
 \leq \max_{\sigma'\in \mcM_n(\sigma)}   \diam(\sigma')^{\gamma-k} 
\sum_{\sigma'\in \mathcal{M}_n(\sigma) }  \diam(\sigma')^{k} \\
&\leq \| \mcM\| \diam^{k} (\sigma) \max_{\sigma'\in \mathcal{M}_n(\sigma)}   \diam(\sigma')^{\gamma-k}\\
& \leq  c^{n(\gamma-k)}_\mcM \| \mcM\| \diam^{\gamma} (\sigma)  \ .\label{eq:core_summation bound for regular methods}
\end{equs}
\end{remark}
%
Given two subdivisions $\mathcal{K} | \sigma$ and $\mathcal{K}' | \sigma$, we say $\mathcal{K}'$ is a refinement of $\mathcal{K}$ if, for every $\tau \in \mathcal{K}$, there exists $\mathcal{K}'' \subset \mathcal{K}'$ with $\mathcal{K}'' | \tau$. 
\begin{lemma}\label{lemma:common refinement}
Consider two strongly regular methods of subdivision $\mathcal{M},\ \mathcal{M}'$. Then, there exists $\mu= \mu(\mcM)>0$, $C=C(\mcM)$ 
such that, for any $\sigma$ and $n\in \mathbb{N}$, there exists a common refinement $\mathcal{K}_n= \mathcal{K}(\mathcal{M},\mathcal{M}',\sigma,n) $ of both $\mathcal{M}_n(\sigma)$ and $\mathcal{M}'_n(\sigma)$ satisfying 
$$\max_{w'\in \mcM'_n(\sigma)}
\big|
 \{ \tilde{w}\in \mcK_n : \tilde{w}\subset w' \} \big| 
 \lesssim_{k } C(\mcM)  \mathfrak{e}(\sigma) n^{\mu(\mcM)} \left(1+\frac{{\diam \big(\mcM'_n(\sigma) \big)}}{\diam\big(\mathcal{M}_n(\sigma)\big)} \right)^k\ .$$
\end{lemma}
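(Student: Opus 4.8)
The plan is to build the common refinement $\mathcal{K}_n$ by intersecting the two subdivisions cell-by-cell and then re-triangulating each resulting intersection polytope, while keeping track of how many pieces each step produces. First I would fix $\sigma$, $n$, and write $\mathcal{M}_n(\sigma) = \{w_i\}_{i \in I}$, $\mathcal{M}'_n(\sigma) = \{w'_j\}_{j \in J}$. For a fixed $w' = w'_j$, the cells $\tilde w \in \mathcal{K}_n$ with $\tilde w \subset w'$ are obtained by taking, for each $w_i$ meeting the interior of $w'$, the intersection $w_i \cap w'$ (a convex polytope of dimension $\le k$) and triangulating it into a bounded-in-$k$ number of simplices. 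So the count $|\{\tilde w \in \mathcal{K}_n : \tilde w \subset w'\}|$ is controlled, up to a dimensional constant, by the number of $w_i$ that intersect $w'$ with positive $k$-volume.

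To bound that number, I would compare volumes: since the $\{w_i\}$ are non-overlapping and each $w_i$ hitting $w'$ (in positive measure) is contained in a $\diam(\mathcal{M}_n(\sigma))$-neighbourhood of $w'$, all such $w_i$ lie in a set of $k$-volume $\lesssim_k \Vol^k(w')\,(1 + \diam(\mathcal{M}_n(\sigma))/\diam(w'))^k \le \Vol^k(w')\,(1 + \diam(\mathcal{M}_n(\sigma))/\diam(\mathcal{M}'_n(\sigma)))^k$ roughly speaking — here one uses that $w'$ is (nearly) a $k$-simplex inside its $k$-plane and a covering/thickening estimate in that $k$-plane. Hence
\[
\#\{i : \Vol^k(w_i \cap w') > 0\} \le \frac{\Vol^k(w')}{\min_i \Vol^k(w_i)}\Big(1 + \tfrac{\diam(\mathcal{M}_n(\sigma))}{\diam(\mathcal{M}'_n(\sigma))}\Big)^k \cdot C_k\;.
\]
Now I invoke strong regularity twice. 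The ratio $\Vol^k(w')/\min_{w \in \mathcal{M}_n(\sigma)}\Vol^k(w)$ is split as $\Vol^k(w')/\max_{w}\Vol^k(w)$ times $\max_w \Vol^k(w)/\min_w \Vol^k(w)$; the second factor is $\le \vertiii{\mathcal{M}} n^{\mu(\mathcal{M})}$ by definition of $\vertiii{\mathcal{M}}$ (absorbing the $\inf_\mu$ into a constant $C(\mathcal{M})$ and exponent $\mu(\mathcal{M})$), and for the first I would use $\Vol^k(w') \le \diam(w')^k \le \diam(\sigma)^k$ together with $\max_w \Vol^k(w) \ge \diam(\sigma)^k / (\mathfrak{e}(\sigma) \| \mathcal{M}\|)$ — indeed $\max_w \Vol^k(w) \ge \sum_w \Vol^k(w)/|I| $ is not quite what I want, so instead I note $\Vol^k(w) = \diam(w)^k/\mathfrak{e}(w) \ge \diam(w)^k/(\|\mathcal{M}\|\mathfrak{e}(\sigma))$ and, picking $w$ of roughly maximal diameter in $\mathcal{M}_n(\sigma)$ and using $\mathfrak{c}_\mathcal{M} < 1$ to see $\max_w \diam(w)$ is comparable to $\mathfrak{c}_\mathcal{M}^n \diam(\sigma)$ only from above — so in fact I should bound $\Vol^k(w')/\Vol^k(w)$ directly by $\vertiii{\mathcal{M}'}n^{\mu(\mathcal{M}')}$-type quantities after reducing both $w,w'$ to a single method via the coarsest common refinement of $\mathcal{M}_n(\sigma),\mathcal{M}'_n(\sigma)$; but more cleanly, the stated inequality already carries an explicit $\mathfrak{e}(\sigma)$ factor, which is exactly what appears when one writes $\Vol^k(w') \ge \diam(w')^k/(\|\mathcal{M}'\|\mathfrak{e}(\sigma))$ is not needed — rather $\diam(w')^k/\Vol^k(w')\le \|\mathcal{M}'\|\mathfrak{e}(\sigma)$ is. Collecting: $\#\{i\} \lesssim_k C(\mathcal{M}) \mathfrak{e}(\sigma) n^{\mu(\mathcal{M})} (1 + \diam(\mathcal{M}'_n(\sigma))/\diam(\mathcal{M}_n(\sigma)))^k$, and multiplying by the dimensional triangulation constant $C_k$ gives the claim.

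\textbf{Main obstacle.} The delicate point is the volume comparison $\Vol^k(w')/\Vol^k(w_i)$: the two methods $\mathcal{M}, \mathcal{M}'$ produce cells of incomparable size a priori, and the only tool tying them together is strong regularity (bounded eccentricity $\|\cdot\|$, bounded volume ratio $\vertiii{\cdot}$ up to polynomial-in-$n$ loss, and contraction $\mathfrak{c}_\mathcal{M}<1$). One must show that after $n$ steps the smallest cell of $\mathcal{M}_n(\sigma)$ has volume at least $\diam(\sigma)^k$ divided by something like $C(\mathcal{M})\mathfrak{e}(\sigma) n^{\mu}$ — i.e. that iterated subdivision cannot create cells that are simultaneously much smaller in volume than a fixed fraction of the worst cell \emph{and} not correspondingly smaller in diameter; this is precisely the content of finiteness of $\|\mathcal{M}\|$ and $\vertiii{\mathcal{M}}$, so the work is in assembling these into the single bound, and in checking the thickened-neighbourhood counting argument is uniform over the (possibly non-simplicial) shapes of the intersection polytopes $w_i \cap w'$. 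The triangulation-of-a-convex-polytope step and the thickening estimate in a $k$-plane are standard and I would not belabour them.
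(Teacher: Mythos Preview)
Your overall strategy is exactly the paper's: build $\mathcal{K}_n$ by triangulating each intersection $w\cap w'$ into $O_k(1)$ simplices, reduce the count to $\mathcal{D}(w'):=|\{w\in\mathcal{M}_n(\sigma):\Vol^k(w\cap w')>0\}|$, and bound $\mathcal{D}(w')$ by a volume argument using $\|\mathcal{M}\|$ and $\vertiii{\mathcal{M}}$.  Where you go wrong is the bookkeeping at the volume step, and this propagates into the flipped ratio $\diam(\mathcal{M}_n)/\diam(\mathcal{M}'_n)$ in your displayed inequality and the meandering paragraph that follows.

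Concretely, the thickening bound is not ``$\Vol^k(w')\bigl(1+\diam(\mathcal{M}_n)/\diam(w')\bigr)^k$'' --- that is off by a factor $\mathfrak{e}(w')$.  The correct and simpler statement is that the (non-overlapping) cells $w$ hitting $w'$ all lie in a $k$-ball of radius $\diam(w')+\diam(\mathcal{M}_n(\sigma))$, so
\[
\sum_{w:\,\Vol^k(w\cap w')>0}\Vol^k(w)\ \le\ \bigl(\diam(w')+\diam(\mathcal{M}_n(\sigma))\bigr)^k.
\]
Now let $\bar w\in\mathcal{M}_n(\sigma)$ attain the mesh, i.e.\ $\diam(\bar w)=\diam(\mathcal{M}_n(\sigma))$.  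By the definition of $\vertiii{\mathcal{M}}$ there are $C(\mathcal{M}),\mu(\mathcal{M})$ with $\Vol^k(\bar w)\le C(\mathcal{M})\,n^{\mu(\mathcal{M})}\Vol^k(w)$ for every $w\in\mathcal{M}_n(\sigma)$, and by the definition of $\|\mathcal{M}\|$ one has $\Vol^k(\bar w)=\diam(\bar w)^k/\mathfrak{e}(\bar w)\ge \diam(\mathcal{M}_n(\sigma))^k/\bigl(\|\mathcal{M}\|\,\mathfrak{e}(\sigma)\bigr)$.  Combining,
\[
\mathcal{D}(w')\ \le\ \frac{C(\mathcal{M})\,n^{\mu(\mathcal{M})}}{\Vol^k(\bar w)}\sum_{w}\Vol^k(w)
\ \le\ \|\mathcal{M}\|\,C(\mathcal{M})\,\mathfrak{e}(\sigma)\,n^{\mu(\mathcal{M})}\Bigl(1+\tfrac{\diam(w')}{\diam(\mathcal{M}_n(\sigma))}\Bigr)^k,
\]
and since $\diam(w')\le\diam(\mathcal{M}'_n(\sigma))$ you get the stated ratio $\diam(\mathcal{M}'_n)/\diam(\mathcal{M}_n)$.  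Note that nothing about $\mathcal{M}'$ beyond this trivial inequality is used, which explains why only $C(\mathcal{M})$ and $\mu(\mathcal{M})$ appear.  Your attempt to route through $\Vol^k(w')/\max_w\Vol^k(w)$ and then invoke $\mathfrak{c}_\mathcal{M}$ or properties of $\mathcal{M}'$ is unnecessary and is what derails you.

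One minor point: your worry about ``uniformity over the shapes of the intersection polytopes'' is misplaced.  The thickening estimate is about the full cells $w$, not about $w\cap w'$; the intersection only enters via the separate, purely combinatorial fact that a convex intersection of two $k$-simplices triangulates into $O_k(1)$ simplices.
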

Note that by symmetry one can reverse the roles of $\mcM$ and $\mcM'$ in the estimate above. 
\begin{proof}
Set $$\mathcal{D}_{n;\sigma}^{\mathcal{M} ,\mathcal{M}'}(w') :=   
\big|
\{w\in \mathcal{M}_n(\sigma): \Vol^k(w\cap w') \neq 0\}
\big|. $$
The lemma follows by combining the following claim with the fact that for any two simplices, their intersection can be written as a union of a bounded ($k$-dependent) number of simplices. 
\begin{claim}
There exists $ \mu(\mcM), C(\mcM)>0$
such that 
$$\mathcal{D}_{n;\sigma}^{\mathcal{M} ,\mathcal{M}'}(w')\leq C(\mcM) \mathfrak{e}(\sigma) n^{\mu} \left(1+\frac{{\diam (w')}}{\diam(\mathcal{M}_n(\sigma))} \right)^k\ $$
uniformly in $n\in\mathbb{N}$. 
\end{claim}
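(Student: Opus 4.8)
The plan is to bound the number of pieces $w \in \mathcal{M}_n(\sigma)$ that meet a fixed $w' \in \mathcal{M}'_n(\sigma)$ by a volume-packing argument. Any $w \in \mathcal{M}_n(\sigma)$ with $\Vol^k(w \cap w') \neq 0$ is contained in the $\diam(\mathcal{M}_n(\sigma))$-neighbourhood of $w'$, which is a set of $k$-dimensional volume $\lesssim_k \big(\diam(w') + \diam(\mathcal{M}_n(\sigma))\big)^k$; here we use that $w', w$ both lie in the $k$-plane $\langle \pmb{\sigma}\rangle$ (or, more carefully, one works inside the $k$-dimensional affine hull of $\sigma$, since all simplices in the subdivision are coplanar). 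On the other hand, each such $w$ is non-overlapping with the others and has volume bounded below by $\Vol^k(w) \ge \diam(w)^k / \|\mathcal{M}\| \cdot \mathfrak{e}(\sigma)^{-1}$... more precisely, by definition of $\|\mathcal{M}\|$ we have $\mathfrak{e}(w) \le \|\mathcal{M}\| \mathfrak{e}(\sigma)$, i.e. $\Vol^k(w) \ge \diam(w)^k / (\|\mathcal{M}\| \mathfrak{e}(\sigma))$. So the key is to get a uniform lower bound on $\diam(w)$ for $w \in \mathcal{M}_n(\sigma)$.

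First I would establish the volume ratio bound: by definition of $\vertiii{\mathcal{M}}$, for any $w, w'' \in \mathcal{M}_n(\sigma)$ one has $\Vol^k(w) / \Vol^k(w'') \le \vertiii{\mathcal{M}} \, n^{\mu}$ for the $\mu$ realizing (up to a factor) the infimum in the definition of $\vertiii{\mathcal{M}}$; combined with $\sum_{w'' \in \mathcal{M}_n(\sigma)} \Vol^k(w'') = \Vol^k(\sigma)$ and the cardinality bound $|\mathcal{M}_n(\sigma)| \le \mathrm{card}(\mathcal{M})^n$, this gives a \emph{lower} bound $\Vol^k(w) \gtrsim \vertiii{\mathcal{M}}^{-1} n^{-\mu} \mathrm{card}(\mathcal{M})^{-n} \Vol^k(\sigma)$ — which unfortunately decays exponentially in $n$. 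This is too weak on its own; the point is that it must be combined with the \emph{upper} bound on $\diam(w)$, namely $\diam(w) \le \mathfrak{c}_{\mathcal{M}}^n \diam(\sigma)$ with $\mathfrak{c}_{\mathcal{M}} < 1$. Converting the lower volume bound into a lower diameter bound via the eccentricity control $\Vol^k(w) \le \diam(w)^k$, and converting it the other way via $\diam(w)^k \le \|\mathcal{M}\|\mathfrak{e}(\sigma) \Vol^k(w)$... the right move is: $\diam(w)^k \le \|\mathcal{M}\| \mathfrak{e}(\sigma) \Vol^k(w)$ shows a \emph{single} small $w$ forces $\diam$ small, but I want all $w$ comparable. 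So instead I would use the two-sided volume control from $\vertiii{\mathcal{M}}$ together with $\mathfrak{e}(w) \le \|\mathcal{M}\| \mathfrak{e}(\sigma)$ to conclude that all $w \in \mathcal{M}_n(\sigma)$ have $\diam(w)$ comparable up to a factor $\big(\vertiii{\mathcal{M}} n^\mu \|\mathcal{M}\| \mathfrak{e}(\sigma)\big)^{1/k}$, and in particular $\diam(w) \gtrsim \big(\vertiii{\mathcal{M}} n^{\mu} \|\mathcal{M}\| \mathfrak{e}(\sigma)\big)^{-1/k} \diam(\mathcal{M}_n(\sigma))$.

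Then I would finish the packing count: the number of non-overlapping $w$ inside the $\diam(\mathcal{M}_n(\sigma))$-neighbourhood of $w'$ in the $k$-plane is at most
\[
\frac{C_k \big(\diam(w') + \diam(\mathcal{M}_n(\sigma))\big)^k}{\min_{w} \Vol^k(w)}
\le
C_k \, \|\mathcal{M}\| \mathfrak{e}(\sigma) \, \frac{\big(\diam(w') + \diam(\mathcal{M}_n(\sigma))\big)^k}{\min_w \diam(w)^k}
\lesssim_k C(\mathcal{M}) \, \mathfrak{e}(\sigma) \, n^{\mu} \Big(1 + \frac{\diam(w')}{\diam(\mathcal{M}_n(\sigma))}\Big)^{k},
\]
absorbing $\vertiii{\mathcal{M}}$, $\|\mathcal{M}\|$, and $\mathrm{card}(\mathcal{M})$ into $C(\mathcal{M})$ and $\mu = \mu(\mathcal{M})$.

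The main obstacle I anticipate is the bookkeeping around $\vertiii{\mathcal{M}}$: extracting a clean statement "for all $w,w' \in \mathcal{M}_n(\sigma)$, $\Vol^k(w)/\Vol^k(w') \le C_{\mathcal{M}} n^{\mu}$ with $\mu$ depending only on $\mathcal{M}$" from the infimum-over-$\mu$ definition requires care (one should fix $\mu$ slightly larger than the infimizing exponent so that the sup over $\ell$ is finite), and then propagating this to a \emph{lower} bound on $\diam(w)$ uniform over $\mathcal{M}_n(\sigma)$ — rather than just a single bad simplex — is the delicate point. One must be careful that the exponential factors $\mathrm{card}(\mathcal{M})^n$ and $\mathfrak{c}_{\mathcal{M}}^n$ cancel correctly, leaving only the polynomial $n^{\mu}$; this cancellation is exactly what $\mathfrak{c}_{\mathcal{M}}<1$ and the combination of the three finiteness conditions are designed to guarantee.
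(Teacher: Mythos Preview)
Your volume-packing approach is the same as the paper's, and the core idea is correct, but there are two issues worth flagging.

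First, the detour through $\min_w \diam(w)$ is both unnecessary and costs you a factor. In your final display you pass from $\min_w \Vol^k(w)$ to $\min_w \diam(w)^k$ via $\mathfrak{e}(w)\le \|\mcM\|\mathfrak{e}(\sigma)$, picking up one factor of $\mathfrak{e}(\sigma)$; then your lower bound $\min_w \diam(w)^k \gtrsim \diam(\mcM_n(\sigma))^k /(\vertiii{\mcM}\,n^\mu\,\|\mcM\|\,\mathfrak{e}(\sigma))$ contributes a second one. Your chain therefore yields $\mathfrak{e}(\sigma)^2$, not the $\mathfrak{e}(\sigma)$ you wrote. The paper avoids this by staying at the level of volumes throughout: with $\bar w$ the simplex of maximal diameter, it writes $|\mcK_n(w')|\diam(\bar w)^k = \mathfrak{e}(\bar w)\,|\mcK_n(w')|\Vol^k(\bar w) \le \mathfrak{e}(\bar w)\,\vertiii{\mcM}\,n^\mu \sum_{w\in\mcK_n(w')}\Vol^k(w)$, bounds $\mathfrak{e}(\bar w)\le\|\mcM\|\mathfrak{e}(\sigma)$, and bounds the sum of volumes by $(\diam(\bar w)+\diam(w'))^k$. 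Equivalently, you can shortcut your own argument by bounding $\min_w \Vol^k(w)\ge \Vol^k(\bar w)/(\vertiii{\mcM}\,n^\mu)\ge \diam(\bar w)^k/(\|\mcM\|\,\mathfrak{e}(\sigma)\,\vertiii{\mcM}\,n^\mu)$ directly, never passing through $\min_w\diam(w)$. (The extra $\mathfrak{e}(\sigma)$ is in any case harmless for the downstream application in Lemma~\ref{lem:M_independent of equivalence class}, where $\sigma$ is fixed.)

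Second, your closing paragraph about exponential factors $\mathrm{card}(\mcM)^n$ and $\mathfrak{c}_{\mcM}^n$ needing to cancel is a red herring. Those factors arose only in your abandoned first attempt, where you compared $\Vol^k(w)$ to $\Vol^k(\sigma)$ via the cardinality bound. The correct argument compares $\Vol^k(w)$ only to $\Vol^k(\bar w)$ via $\vertiii{\mcM}$, and no exponential factors ever enter; there is nothing to cancel, and neither $\mathrm{card}(\mcM)$ nor $\mathfrak{c}_{\mcM}$ appears in the proof.
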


%
%
We now prove this claim.
Write $\mathcal{K}_{n}(w'):=\{w\in \mathcal{M}_n(\sigma): \Vol^k(w\cap w') \neq 0\}$ and let $\bar w\in \mcM(\sigma)$ be such that $\diam (\mcM_n(\sigma))=\diam(\bar w)$. Then,
\begin{align*}
|\mathcal{K}_{n}(w')| \diam(\bar w)^{k} & \leq  |\mathcal{K}_{n}(w')| \mathfrak{e}(\bar w)  \Vol^k(\bar{w})\leq 
 \mathfrak{e}(\bar w)\vertiii{\mathcal{M}}  n^{\mu(\mcM)} \sum_{w\in \mathcal{K}_{n}(w')} \Vol^k(w)
%
%
\\
 & \leq \| \mcM\| \cdot \vertiii{\mathcal{M}} \mathfrak{e}(\sigma) n^{\mu(\mcM)} \sum_{w\in \mathcal{K}_{n}(w')} \Vol(w) \\
 &\leq \| \mcM\|  \cdot \vertiii{\mathcal{M}} \mathfrak{e}(\sigma) n^{\mu(\mcM)} (\diam(\bar{w})+{\diam (w')})^k\ 
\end{align*}
and therefore, 
$|\mathcal{K}_{n}(w')|\leq \| \mcM\|  \vertiii{\mathcal{M}}  \mathfrak{e}(\sigma) n^{\mu(\mcM)} \left(1+\frac{{\diam (w')}}{\diam(\bar w )} \right)^k\ .$
\end{proof}
\subsection{Proof of Proposition~\ref{prop:sewing}} 
We shall suppress $\mathfrak{K}\subset \mathbb{R}^d$ from the notation throughout the proof.

\begin{lemma}\label{lem:sewing construction}
In the setting of Proposition~\ref{prop:sewing}, for any strongly regular method of subdivision $\mathcal{M}$, the map 
$$\mathcal{I}_{\mathcal{M} } \Xi: \mathfrak{X}^k \to \mathbb{R},\qquad \sigma \mapsto  \mathcal{I}_{\mathcal{M} } \Xi (\sigma):= \lim_{n\to \infty } \sum_{\sigma'\in \mcM_n(\sigma)} \Xi(\sigma')$$
satisfies
 $$ |\mathcal{I}_{\mathcal{M} } \Xi (\sigma)-\Xi (\sigma) | \leq \frac{\| \mcM\| {\mathrm{card}(\mathcal{M})} }{1-\mathfrak{c}^{(\gamma-k)}_\mcM}  \llbracket \delta\Xi\rrbracket_{\gamma,\mathfrak{K}}
 \diam(\sigma)^{\gamma} \ .
 $$ as well as
 $\mathcal{I}_{\mcM } \Xi (\sigma) = \sum_{\sigma'\in \mathcal{M}_n(\sigma) } \mathcal{I}_{\mathcal{M} }\Xi (\sigma')$.
\end{lemma}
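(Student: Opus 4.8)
The plan is to exhibit $\mathcal{I}_{\mathcal{M}}\Xi(\sigma)$ as the limit of the partial sums $\Xi^{(n)}(\sigma):=\sum_{\sigma'\in\mathcal{M}_n(\sigma)}\Xi(\sigma')$, where we use the convention $\mathcal{M}_0(\sigma)=\{\sigma\}$ so that $\Xi^{(0)}=\Xi$, and to control the telescoping series $\mathcal{I}_{\mathcal{M}}\Xi(\sigma)-\Xi(\sigma)=\sum_{n\ge 0}\big(\Xi^{(n+1)}(\sigma)-\Xi^{(n)}(\sigma)\big)$ term by term.

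First I would use the semigroup property \eqref{eq:semigroup}, in the form $\mathcal{M}_{n+1}(\sigma)=\bigcup_{w\in\mathcal{M}_n(\sigma)}\mathcal{M}_1(w)$ (with every piece inheriting the orientation of $\sigma$), to rewrite each telescoping increment as a sum of elementary defects,
\[
\Xi^{(n+1)}(\sigma)-\Xi^{(n)}(\sigma)=-\sum_{w\in\mathcal{M}_n(\sigma)}\delta_{\mathcal{M}_1(w);w}\Xi \;.
\]
By Definition~\ref{def:germ} together with $|\mathcal{M}_1(w)|\le\mathrm{card}(\mathcal{M})$, each defect obeys $|\delta_{\mathcal{M}_1(w);w}\Xi|\le\mathrm{card}(\mathcal{M})\,\llbracket\delta\Xi\rrbracket_{\gamma,\mathfrak{K}}\,\diam(w)^{\gamma}$; summing over $w\in\mathcal{M}_n(\sigma)$ and invoking the summation estimate \eqref{eq:core_summation bound for regular methods} gives
\[
\big|\Xi^{(n+1)}(\sigma)-\Xi^{(n)}(\sigma)\big|\le\mathrm{card}(\mathcal{M})\,\|\mathcal{M}\|\,\llbracket\delta\Xi\rrbracket_{\gamma,\mathfrak{K}}\,\mathfrak{c}_{\mathcal{M}}^{n(\gamma-k)}\,\diam(\sigma)^{\gamma}\;.
\]
Since $\mathfrak{c}_{\mathcal{M}}<1$ and $\gamma>k$, the geometric series $\sum_{n\ge 0}\mathfrak{c}_{\mathcal{M}}^{n(\gamma-k)}$ converges, so $\big(\Xi^{(n)}(\sigma)\big)_n$ is Cauchy for every $\sigma\in\mathfrak{X}^k$, which makes $\mathcal{I}_{\mathcal{M}}\Xi(\sigma)$ well defined, and summing the last display over $n\ge 0$ yields the claimed bound (absorbing the harmless constant $\mathrm{card}(\mathcal{M})$).

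For the self-similarity identity $\mathcal{I}_{\mathcal{M}}\Xi(\sigma)=\sum_{\sigma'\in\mathcal{M}_n(\sigma)}\mathcal{I}_{\mathcal{M}}\Xi(\sigma')$ I would fix $n$, interchange the finite sum over $\mathcal{M}_n(\sigma)$ with the limit defining each $\mathcal{I}_{\mathcal{M}}\Xi(\sigma')$, and apply \eqref{eq:semigroup} again in the form $\bigcup_{\sigma'\in\mathcal{M}_n(\sigma)}\mathcal{M}_m(\sigma')=\mathcal{M}_{n+m}(\sigma)$, so that $\sum_{\sigma'\in\mathcal{M}_n(\sigma)}\sum_{\sigma''\in\mathcal{M}_m(\sigma')}\Xi(\sigma'')=\sum_{\sigma''\in\mathcal{M}_{n+m}(\sigma)}\Xi(\sigma'')$, whose limit as $m\to\infty$ is again $\mathcal{I}_{\mathcal{M}}\Xi(\sigma)$.

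The step I expect to be the crux is the bound on the telescoping increments: the naive estimate $|\Xi(\sigma)-\Xi^{(n)}(\sigma)|=|\delta_{\mathcal{M}_n(\sigma);\sigma}\Xi|\le\llbracket\delta\Xi\rrbracket_{\gamma}\,|\mathcal{M}_n(\sigma)|\,\diam(\sigma)^{\gamma}$ is useless, since $|\mathcal{M}_n(\sigma)|$ may grow exponentially in $n$. One must instead telescope \emph{one level at a time} and exploit that, because $\mathfrak{c}_{\mathcal{M}}<1$ and $\gamma>k$, the weights $\diam(w)^{\gamma}$ over $w\in\mathcal{M}_n(\sigma)$ decay geometrically fast enough to overcome this growth — precisely the content of \eqref{eq:core_summation bound for regular methods}, which is where strong regularity of $\mathcal{M}$ (finiteness of $\|\mathcal{M}\|$ and $\mathfrak{c}_{\mathcal{M}}<1$) enters. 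Everything else is routine bookkeeping.
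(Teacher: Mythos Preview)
Your proposal is correct and follows essentially the same route as the paper's proof: telescope $\Xi^{(n+1)}-\Xi^{(n)}$ using the semigroup property \eqref{eq:semigroup}, bound each local defect $\delta_{\mathcal{M}_1(w);w}\Xi$ via Definition~\ref{def:germ}, and then apply the summation estimate \eqref{eq:core_summation bound for regular methods} to obtain a geometric series in $\mathfrak{c}_{\mathcal{M}}^{\gamma-k}$; the self-similarity identity is handled identically via \eqref{eq:semigroup}. The only difference is that you (correctly, given how $\llbracket\delta\Xi\rrbracket_{\gamma}$ is defined with the $|\mathcal{K}|$ factor) pick up an extra $\mathrm{card}(\mathcal{M})$ in the constant, which the paper's displayed chain of inequalities silently drops---so your bookkeeping is in fact slightly more careful, though the precise constant in the lemma statement then needs that factor as well.
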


\begin{proof}

Using \eqref{eq:core_summation bound for regular methods} in the last line we find that
\begin{align*}
\Bigg|\sum_{\sigma'\in \mcM_{n+1}(\sigma)} \Xi(\sigma')-\sum_{\sigma'\in \mcM_n(\sigma)} \Xi(\sigma') \Bigg| 
&\leq \sum_{\sigma'\in \mcM_n(\sigma)}   \Big(\Xi(\sigma')- \sum_{\sigma''\in \mcM_1(\sigma')}  \Xi(\sigma'')\Big) \\
&\leq \llbracket \delta\Xi\rrbracket_{\gamma,\mathfrak{K}}  {\mathrm{card}(\mathcal{M})}  \sum_{\sigma'\in \mcM_n(\sigma)}   \diam (\sigma')^\gamma \\
& \leq  \mathfrak{c}^{n(\gamma-k)}_\mcM \| \mcM\| {\mathrm{card}(\mathcal{M})} \llbracket \delta\Xi\rrbracket_{\gamma,\mathfrak{K}} \diam (\sigma)^{\gamma} \ .
\end{align*}
Since $c_\mcM<1$ and $\gamma>k$, this is summable in $n$. The remaining claim follows from \eqref{eq:semigroup}.
\end{proof}
We define the following equivalence relation between methods of subdivisions: we say that 
$\mcM\sim\mcM'$, whenever there exists $C>0$ such that $\frac{1}{C}\leq \sup_{n}\frac{\diam(\mcM_{n} (\sigma))}{\diam( \mcM'_{n} (\sigma))} \leq C\ $ uniformly over $\sigma\in \mathfrak{X}^k_{\leq 1}$, $n\in \mathbb{N}$.

\begin{lemma}\label{lem:M_independent of equivalence class}
Given two strongly regular methods of subdivision $\mcM$ and $\mcM'$, the maps $\mathcal{I}_{\mathcal{M} }\Xi(\sigma)$ and $\mathcal{I}_{\mcM'}\Xi(\sigma)$ agree whenever $\mcM\sim\mcM'$.
\end{lemma}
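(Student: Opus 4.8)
The plan is to compare the two limits $\mathcal I_{\mcM}\Xi(\sigma)$ and $\mathcal I_{\mcM'}\Xi(\sigma)$ by estimating, for each fixed $n$, the difference $\big|\sum_{\sigma'\in\mcM_n(\sigma)}\Xi(\sigma')-\sum_{\sigma'\in\mcM'_n(\sigma)}\Xi(\sigma')\big|$ and showing it tends to $0$ as $n\to\infty$. The natural tool is the common refinement $\mathcal K_n=\mathcal K(\mcM,\mcM',\sigma,n)$ from Lemma~\ref{lemma:common refinement}, which refines both $\mcM_n(\sigma)$ and $\mcM'_n(\sigma)$. First I would write
\begin{equ}
\sum_{\sigma'\in\mcM_n(\sigma)}\Xi(\sigma')-\sum_{\tilde w\in\mathcal K_n}\Xi(\tilde w)=\sum_{\sigma'\in\mcM_n(\sigma)}\delta_{\mathcal K_n|_{\sigma'};\sigma'}\Xi\;,
\end{equ}
where $\mathcal K_n|_{\sigma'}=\{\tilde w\in\mathcal K_n:\tilde w\subset\sigma'\}$, and similarly with $\mcM'_n$ in place of $\mcM_n$. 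Subtracting these two identities reduces the problem to bounding these two ``refinement defect'' sums.

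Next I would estimate each defect sum using the hypothesis $\Xi\in C^{\eta,\gamma}_{2,k}$, which gives $|\delta_{\mathcal K_n|_{\sigma'};\sigma'}\Xi|\le\llbracket\delta\Xi\rrbracket_{\gamma}\,|\mathcal K_n|_{\sigma'}|\,\diam(\sigma')^{\gamma}$. Summing over $\sigma'\in\mcM_n(\sigma)$, I would bound $|\mathcal K_n|_{\sigma'}|$ by the conclusion of Lemma~\ref{lemma:common refinement}; crucially, because $\mcM\sim\mcM'$, the ratio $\diam(\mcM'_n(\sigma))/\diam(\mcM_n(\sigma))$ appearing there is bounded by a constant $C$ uniform in $n$ and $\sigma$, so that bound reads $|\mathcal K_n|_{\sigma'}|\lesssim_{\mcM} \mathfrak e(\sigma)\,n^{\mu}$. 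Combining with $\sum_{\sigma'\in\mcM_n(\sigma)}\diam(\sigma')^{\gamma}\le\mathfrak c_{\mcM}^{n(\gamma-k)}\|\mcM\|\diam(\sigma)^{\gamma}$ from \eqref{eq:core_summation bound for regular methods}, the whole defect is bounded by a constant times $\mathfrak e(\sigma)\,n^{\mu}\,\mathfrak c_{\mcM}^{n(\gamma-k)}\diam(\sigma)^{\gamma}$. Since $\mathfrak c_{\mcM}<1$ and $\gamma>k$, this goes to $0$ as $n\to\infty$; the same argument applies to the $\mcM'$-defect (using that $\mathfrak c_{\mcM'}<1$ and the symmetric version of Lemma~\ref{lemma:common refinement}, or simply $\mcM\sim\mcM'$ again). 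Hence $\sum_{\sigma'\in\mcM_n(\sigma)}\Xi(\sigma')$ and $\sum_{\sigma'\in\mcM'_n(\sigma)}\Xi(\sigma')$ have the same limit, which is the claim.

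The main obstacle is purely bookkeeping: making sure the combinatorial bound on $|\mathcal K_n|_{\sigma'}|$ from Lemma~\ref{lemma:common refinement} interacts correctly with the sum $\sum_{\sigma'}\diam(\sigma')^{\gamma}$ — in particular, that the polynomial factor $n^{\mu}$ (which does not depend on $\sigma'$) can be pulled out of the sum, and that the equivalence $\mcM\sim\mcM'$ is exactly what controls the otherwise dangerous ratio of diameters. One should also double-check that the common refinement $\mathcal K_n$ is genuinely a subdivision of each $\sigma'\in\mcM_n(\sigma)$ (so that $\delta_{\mathcal K_n|_{\sigma'};\sigma'}\Xi$ is well-defined), which follows from the definition of refinement together with non-overlappingness; this is where the remark that the intersection of two simplices decomposes into boundedly many simplices is used. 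No new idea beyond Lemma~\ref{lemma:common refinement} and \eqref{eq:core_summation bound for regular methods} is needed.
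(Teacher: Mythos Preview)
Your proposal is correct and uses the same ingredients as the paper's proof: the common refinement of Lemma~\ref{lemma:common refinement}, the defect bound from $\llbracket\delta\Xi\rrbracket_\gamma$, the equivalence $\mcM\sim\mcM'$ to kill the diameter ratio, and the summation estimate \eqref{eq:core_summation bound for regular methods}. The only (cosmetic) difference is that the paper first invokes the additivity $\mathcal{I}_\mcM\Xi(\sigma)=\sum_{w\in\mcM_n(\sigma)}\mathcal{I}_\mcM\Xi(w)$ from Lemma~\ref{lem:sewing construction} and then inserts $\pm\Xi(w)$ and $\pm\sum_{\tilde w\in\mathcal K_n}\Xi(\tilde w)$, producing two extra terms $\sum_w(\mathcal{I}_\mcM\Xi(w)-\Xi(w))$ that also vanish, whereas your direct comparison of the finite approximants $\sum_{\sigma'\in\mcM_n}\Xi(\sigma')$ is slightly leaner.
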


\begin{proof}
By Lemma~\ref{lemma:common refinement} there exists $\mu=\mu(\mcM,\mcM')$ and a common refinement $\mathcal{K}_n$ of $\mcM_n(\sigma), \mcM'_n(\sigma)$ such that, if we write $\mathcal{K}_n(w):= \left\{ \tilde{ w} \in \mathcal{K}_n\ : \ \tilde w\subset w \right\} $  for $w\in \mcM_n(\sigma) \cup \mcM'_n(\sigma)$, 
then
$$|\mathcal{K}_n(w)| \lesssim_{\mathcal{M},\tilde {\mathcal{M}},\sigma} n^{\mu}  .$$
Thus
\begin{align*}
\mathcal{I}_{\mathcal{M} } \Xi(\sigma) - \mathcal{I}_{\mathcal{M}' } \Xi (\sigma) &=
 \sum_{w\in  \mathcal{M}_n(\sigma)} \mathcal{I}_{\mathcal{M} } \Xi(w) -  \sum_{w\in \mathcal{M}_n'(\sigma)} \mathcal{I}_{\mathcal{M}' } \Xi(w) \\
&= \sum_{w\in  \mathcal{M}_n(\sigma)} \left( \mathcal{I}_{\mathcal{M} } \Xi(w)- \Xi(w) \right)  - \sum_{w'\in \mathcal{M}_n'(\sigma)} \left( \mathcal{I}_{\mathcal{M}' } \Xi(w')- \Xi(w') \right)\\
& \quad + \sum_{w\in  \mathcal{M}_n(\sigma)} \Big( \Xi(w) -\sum_{\tilde w\in \mathcal{K}_n(w) }  \Xi(\tilde w)   \Big)  
- \sum_{w'\in \mathcal{M}_n'(\sigma)} \Big( \Xi(w') 	-	\sum_{\tilde w\in \mathcal{K}_n(w') } 	\Xi(\tilde w)	\Big)\ .
\end{align*}

The first two sums in the last expression above converge to $0$ as $n\to \infty$ by definition, and for the third sum we have \begin{align*}
\Big| \Xi(w) -\sum_{\tilde w\in \mathcal{K}^{M,\tilde{M}}_n(\sigma); \tilde w\subset w }  \Xi(w)   \Big|  
&= | \delta_{w,\mathcal{K}_n(w)} \Xi| \leq |\mathcal{K}_n(w)| \llbracket\delta\Xi\rrbracket_\gamma \diam(w)^\gamma\\
&\lesssim n^\mu  \llbracket \delta\Xi\rrbracket_\gamma \diam (w)^\gamma
\end{align*}
Similarly, one estimates the summands in the fourth sum. We conclude by \eqref{eq:core_summation bound for regular methods}.
\end{proof}
\begin{remark}\label{rem:flexibility_remark} 
We leave the following observation as an exercise to the reader: Given a strongly regular method of subdivision $\mcM$, a simplex $\sigma$ and $\mathcal{K}$ a subdivision of $\sigma$. Then there exists a strongly regular method $\mcM' \sim \mcM$ such that $\mcK= \mcM_1(\sigma)$.
\end{remark}
\begin{lemma}
For any strongly regular method of subdivision
$\mathcal{I}_{\mcM} \Xi$ is an element of $\Omega^{k}$.\end{lemma}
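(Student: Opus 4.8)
The plan is to verify that $\mathcal{I}_{\mcM}\Xi$ satisfies the two defining properties of $\Omega^k$ recorded in Remark~\ref{rem:relation}: antisymmetry under orientation reversal, and additivity over subdivisions (together with measurability, which is automatic since $\mathcal{I}_{\mcM}\Xi$ is a pointwise limit of the measurable maps $\sigma \mapsto \sum_{\sigma'\in\mcM_n(\sigma)}\Xi(\sigma')$, each of which is measurable because $\mcM_n$ acts on vertices in a fixed combinatorial way and $\Xi$ is assumed measurable).

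\textbf{Antisymmetry.} For the property $\mathcal{I}_{\mcM}\Xi(-\sigma) = -\mathcal{I}_{\mcM}\Xi(\sigma)$, first observe that if $\mcM_n(\sigma) = \{\sigma'_1,\dots,\sigma'_N\}$ is a subdivision of $\sigma$ (all pieces inheriting the orientation of $\sigma$), then reversing the orientation of $\sigma$ and applying $\mcM_n$ produces a subdivision of $-\sigma$ whose pieces are, up to the combinatorial action of $\mcM_n$ on the vertex list, the orientation-reversed versions of the $\sigma'_i$. More carefully: since $\mcM_n$ is a method of subdivision in the sense of Definition~\ref{def:reg_method}, the geometric decomposition $\pmb{\sigma} = \bigcup_i \pmb{\sigma}'_i$ does not depend on the orientation, so $\mcM_n(-\sigma)$ consists of the same underlying simplices $\pmb{\sigma}'_i$ now carrying the orientation $-o$. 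Using $\Xi(\sigma) = -\Xi(-\sigma)$ from Definition~\ref{def:germ}, we get $\sum_{w\in\mcM_n(-\sigma)}\Xi(w) = -\sum_{w\in\mcM_n(\sigma)}\Xi(w)$ for each $n$, and passing to the limit gives the claim. (Strictly, if $\mcM_n$ does not literally commute with orientation reversal on the nose, one invokes Lemma~\ref{lem:M_independent of equivalence class} or Remark~\ref{rem:flexibility_remark} to replace $\mcM$ by an equivalent method for which the decomposition of $-\sigma$ is the reversal of that of $\sigma$; the value of $\mathcal{I}_{\mcM}\Xi$ is unchanged.)

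\textbf{Additivity over subdivisions.} Let $\sigma\in\mathfrak{X}^k$ and let $\mcK\,|\,\sigma$ be an arbitrary subdivision. We must show $\mathcal{I}_{\mcM}\Xi(\sigma) = \sum_{\tau\in\mcK}\mathcal{I}_{\mcM}\Xi(\tau)$. By Remark~\ref{rem:flexibility_remark} there is a strongly regular method $\mcM'\sim\mcM$ with $\mcM'_1(\sigma) = \mcK$; by Lemma~\ref{lem:M_independent of equivalence class}, $\mathcal{I}_{\mcM}\Xi = \mathcal{I}_{\mcM'}\Xi$. Now apply the second conclusion of Lemma~\ref{lem:sewing construction} to $\mcM'$ with $n=1$: it states precisely that $\mathcal{I}_{\mcM'}\Xi(\sigma) = \sum_{\tau\in\mcM'_1(\sigma)}\mathcal{I}_{\mcM'}\Xi(\tau) = \sum_{\tau\in\mcK}\mathcal{I}_{\mcM'}\Xi(\tau)$, and rewriting $\mathcal{I}_{\mcM'}\Xi$ as $\mathcal{I}_{\mcM}\Xi$ on both sides finishes the argument.

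\textbf{Main obstacle.} The only genuinely delicate point is the passage from ``$\mcK$ is a subdivision of $\sigma$'' to ``$\mcK = \mcM'_1(\sigma)$ for some strongly regular $\mcM'\sim\mcM$'': one must produce, from a single prescribed first-step subdivision, an entire semigroup-compatible family $\{\mcM'_\ell\}_{\ell\in\mathbb{N}}$ still satisfying $\mathrm{card}(\mcM')\vee\|\mcM'\|\vee\vertiii{\mcM'}<\infty$ and $\mathfrak{c}_{\mcM'}<1$, while keeping the diameter ratios comparable to those of $\mcM$. This is exactly the content left as an exercise in Remark~\ref{rem:flexibility_remark}; granting it, the proof is a short assembly of the preceding lemmas. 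One should also double-check the measurability claim — i.e.\ that for fixed $n$ the map $(v_0,\dots,v_k)\mapsto\sum_{w\in\mcM_n([v_0,\dots,v_k])}\Xi(w)$ is Borel — which holds because the vertices of each $w$ depend on $(v_0,\dots,v_k)$ through fixed affine (hence continuous) combinations determined by the combinatorics of $\mcM_n$, and $\Xi$ is Borel by hypothesis, so the finite sum of compositions is Borel; measurability is then inherited by the pointwise limit $\mathcal{I}_{\mcM}\Xi$.
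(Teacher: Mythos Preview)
Your proof is correct and follows essentially the same route as the paper: antisymmetry is dispatched as immediate, and additivity over an arbitrary subdivision $\mcK|\sigma$ is obtained by invoking Remark~\ref{rem:flexibility_remark} to produce $\mcM'\sim\mcM$ with $\mcM'_1(\sigma)=\mcK$, then combining Lemma~\ref{lem:M_independent of equivalence class} with the second conclusion of Lemma~\ref{lem:sewing construction}. You supply more detail on measurability and antisymmetry than the paper (which simply asserts the first condition ``follows directly''), and you rightly flag that the construction in Remark~\ref{rem:flexibility_remark} is the only genuinely nontrivial input, left as an exercise in the paper as well.
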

\begin{proof}
Note that the first condition of Definition~\ref{def:kform} follows directly, and it remains to check that for any $\sigma \in \mathfrak{X}^{k}$ and subdivision $\mcK | \sigma$, one has  
$$\mathcal{I}_{\mcM}\Xi(\sigma) = \sum_{\sigma' \in \mcK} \mathcal{I}_{\mcM}\Xi(\sigma')\ .$$

The flexibility in Definition~\ref{def:reg_method} allows us to choose a strongly regular method of subdivision $\mcM'$ as in Remark~\ref{rem:flexibility_remark}. 
It follows by Lemma~\ref{lem:M_independent of equivalence class}  that 
\begin{equs}
\mathcal{I}_{\mathcal{M}}\Xi(\sigma)  &= \mathcal{I}_{\mathcal{M}'}\Xi(\sigma) = \lim_{n \rightarrow \infty}  \sum_{\sigma'\in \mcM'_{n+1}(\sigma)} \Xi(\sigma') = \sum_{\sigma'\in \mcM'_{1}(\sigma)}  \lim_{n \rightarrow \infty}  \sum_{\sigma'' \in \mathcal{M}'_{n}(\sigma')} \mathcal{I}_\mathcal{M}' \Xi(\sigma'') \\
&=  \sum_{\sigma'\in \mcK} \mathcal{I}_{\mcM} \Xi(\sigma')\;.
\end{equs}
\end{proof}

%
%
%
%

\begin{proof}[Proof of Proposition~\ref{prop:sewing}]
We first note that 
any map 
$\mathcal{I}: C_{2,k}^{\eta, \gamma}(\mathfrak{K}) \to \Omega^k(\mathfrak{K}),$
satisfying
 \[
 |\mathcal{I} \Xi (\sigma)- \Xi (\sigma)|\lesssim \llbracket \delta\Xi\rrbracket_{\gamma,\mathfrak{K}} \diam(\sigma)^{\gamma}
\]
also satisfies
$$|\mathcal{I}\Xi(\sigma)  | 
\leq  |\Xi(\sigma) | +    |\Xi(\sigma)-\mathcal{I}\Xi(\sigma) |\lesssim \llbracket\Xi\rrbracket_{(\eta,\gamma)} \diam(\sigma)^\eta \ .$$
Considering a regular sequence of subdivisions $\{\mcK_n\}_{n\in \mathbb{N}}$ of $\sigma$ any such map also satisfies 
$$
 |\mathcal{I} \Xi (\sigma)- \sum_{\sigma'\in \mcK_n}\Xi (\sigma')|\lesssim \llbracket \delta\Xi\rrbracket_{\gamma,\mathfrak{K}} \sum_{\sigma'\in \mcK_n}  \diam(\sigma')^{\gamma} \to 0\;.
$$

Thus, in view of the preceding lemmas we have constructed for any strongly regular method of subdivision a map $\mathcal{I}$ satisfying the properties in Proposition~\ref{prop:sewing}.
 It remains to prove uniqueness of the map $\mathcal{I}$. Assume $\mathcal{I}'$ is a second such map and let $\mcM$ be a strongly regular method of subdivision. Then, by additivity 
\begin{align*}
|\mathcal{I}\Xi(\sigma) - \mathcal{I}'\Xi(\sigma)  |  &\leq \sum_{\sigma' \in \mcM_n}(\sigma) |\mathcal{I}\Xi(\sigma') - \mathcal{I}'\Xi(\sigma')  |\\
&\leq \sum_{\sigma' \in \mcM_n(\sigma)} \left( |\mathcal{I}\Xi(\sigma') -\Xi(\sigma') | +| \mathcal{I}'\Xi(\sigma')- \Xi(\sigma')  | \right)\\
&\lesssim \sum_{\sigma' \in \mcM_n(\sigma)} \diam(\sigma)^{\gamma} 
\end{align*}
which converges to $0$ as $n\to \infty$ by \eqref{eq:core_summation bound for regular methods}.
\end{proof}
{
\begin{remark}
In the proof of Proposition~\ref{prop:sewing}, we show that the output of the sewing process is independent of any choice of method of subdivision, while 
Lemma~\ref{lem:M_independent of equivalence class} only showed independence within each equivalence class, where these equivalence classes depend in an ad hoc way on how we choose to quantify properties of methods of subdivision in Definition~\ref{def:reg_method}.
\end{remark}
}


\begin{appendix}
\section{Symbolic Index}
\begin{center}
\begin{longtable}{p{.12\textwidth}p{.65\textwidth}p{.12\textwidth}}
\toprule
Object & Meaning & Ref. \\
\midrule
\endhead
\bottomrule
\endfoot
$\Vol^k$ & $k$-dimensional Hausdorff measure & Page~\pageref{sec:preliminaries}\\
$\pmb{\sigma}$& non-oriented simplex & Def.~\ref{def:non-oriented}\\
$\sigma$& oriented simplex & Def.~\ref{def:simplex, etc} \\
$\VV(\sigma)$ & vertices of $\sigma$ &  Def.~\ref{def:non-oriented} \\
$\diam$& diameter of a simplex or cube and mesh of a subdivision  & Def.~\ref{def:subdivision,mesh}  \\
$\mathcal{K}|\sigma$& $\mathcal{K}$ is a subdivision of $\sigma$ & Def.~\ref{def:subdivision_2} \\
$\mathfrak{X}^k$  & set of oriented simplices & Def.~\ref{def:simplex, etc} \\
$\mathcal{X}^k $ & $\mathbb{Z}$ module of chains & Def.~\ref{def:chain}\\
${\mass^k_\alpha }$ & $\alpha$-mass & Eq.~\ref{def:mass} \\
$\Omega^k$ & universe of (measurable) cochains & Def.~\ref{def:kform}  \\
$\Omega_{\alpha,\beta}^{k}$& elements $A\in \Omega^k$ such that $\|A\|_{\alpha,\beta}<+\infty$  & Def.~\ref{def:dist_form}  \\
$\mathfrak{Q}^k$  & set of oriented cubes & Def.~\ref{def:cube} \\
$\Bd(\sigma)$ & oriented faces of $\sigma$ & Def.~\ref{def:simplex, etc}  \\
$\partial$ & boundary operator $\delta \sigma = \sum_{F\in \Bd(\sigma)} F$ & Eq.~\ref{eq:boundary op}  \\
$\mathcal{B}_{\alpha,\beta}$& completion of $(\mathcal{X}^k,  |\cdot |_{(\alpha,\beta)}) $ & Def.~\ref{def:B^k} \\
$C_2^{\eta,\gamma}$ & certain functions on simplices/germs & Def.~\ref{def:germ}  \\
$\Xi$& element of $C_2^{\eta,\gamma}$  & Def.~\ref{def:germ} \\
$\mu_{\sigma}$& family of probability measure supported on $\pmb{\sigma}$ & Page~\pageref{thm:multiplication} \\
$\delta\Xi$& generalised increment of $\Xi$ & Def.~\ref{def:germ}\\
$\mathcal{I}$ & sewing/reconstruction operator $C_2^{\eta,\gamma}\to \Omega^k$ & Prop.~\ref{prop:sewing}\\
$|\cdot |_{(\alpha,\beta)} $ & $(\alpha,\beta)$-flat norm & Def.~\ref{def:flat_norm}\\
$\delta_{\pmb Q}$& element of $D'$ which integrates functions over $\pmb{Q}$ & Def.~\ref{def:cube} \\
$\mathcal{M}$& method of subdivision & Def.~\ref{def:reg_method} \\
$\mathfrak{e}$& eccentricity of a simplex $\mathfrak{e} (\sigma)= \frac{\diam(\sigma)^k}{\Vol^k(\sigma)}$ & Eq.~\ref{eq:eccentricity} \\
\end{longtable}
\end{center}

\end{appendix}

	\bibliographystyle{Martin}
	\bibliography{./GeoInt.bib}

\end{document}